\numberwithin{equation}{section}
\theoremstyle{plain}
\newtheorem{theorem}{Theorem}[section]
\newtheorem{lemma}[theorem]{Lemma}
\newtheorem{proposition}[theorem]{Proposition}
\newtheorem{corollary}[theorem]{Corollary}
\theoremstyle{remark}
\newtheorem{remark}[theorem]{Remark}
\newtheorem{fact}[theorem]{Fact}
\newcommand{\N}{\mathbb{N}}
\newcommand{\Z}{\mathbb{Z}}
\newcommand{\R}{\mathbb{R}}
\newcommand{\C}{\mathbb{C}}
\newcommand{\ind}[1]{\mathbf{1}_{\left\{#1\right\}}}
\renewcommand{\bar}[1]{\overline{#1}}
\renewcommand{\tilde}[1]{\widetilde{#1}}
\renewcommand{\hat}[1]{\widehat{#1}}
\newcommand{\dd}{\mathrm{d}}
\DeclareMathOperator{\E}{\mathbb{E}}
\newcommand{\Q}{\mathbb{Q}}
\renewcommand{\P}{\mathbb{P}}
\newcommand{\calP}{\mathcal{P}}
\newcommand{\x}{\mathbf{x}}
\newcommand{\X}{\mathbf{X}}
\newcommand{\bfZ}{\mathbf{Z}}
\renewcommand{\rho}{\varrho}
\renewcommand{\epsilon}{\varepsilon}
\begin{document}

\begin{frontmatter}
\title{A necessary and sufficient condition for the convergence of the derivative martingale in a branching Lévy process}
\runtitle{The derivative martingale in a branching Lévy process}

\begin{aug}
\author[A]{\fnms{Bastien} \snm{Mallein}\ead[label=e1]{mallein@math.univ-paris13.fr}}
\and
\author[B]{\fnms{Quan} \snm{Shi}\ead[label=e2,mark]{quan.shi@amss.ac.cn}}
\address[A]{Université Sorbonne Paris Nord, LAGA, UMR 7539, F-93430, Villetaneuse, France, \printead{e1}}

\address[B]{Academy of Mathematics and Systems Science, Chinese Academy of Sciences, 100190,
Beijing, China, \printead{e2}}
\end{aug}

\begin{abstract}
A continuous-time particle system on the real line satisfying the branching property and an exponential integrability condition is called a branching Lévy process, and its law is characterized by a triplet $(\sigma^2,a,\Lambda)$. 
We obtain a necessary and sufficient condition for the convergence of the derivative martingale of such a process to a non-trivial limit in terms of $(\sigma^2,a,\Lambda)$. This extends previously known results on branching Brownian motions and branching random walks.
To obtain this result, we rely on the spinal decomposition and establish a novel zero-one law on the perpetual integrals of centred Lévy processes conditioned to stay positive.
\end{abstract}

\begin{keyword}[class=MSC2020]
	\kwd{60G44}
	 \kwd{60G51}
	\kwd{60J80} 
	\kwd{60F20}
\end{keyword}

\begin{keyword}
\kwd{Branching L\'evy process}
\kwd{derivative martingale}
\kwd{spinal decomposition}
\kwd{Lévy process}
\kwd{perpetual integral}
\end{keyword}

\end{frontmatter}


\section{Introduction}

A branching random walk is a discrete-time particle system on the real line, which can be constructed as follows. It starts with an initial individual at position $0$. This individual gives birth at time $1$ to children, that are positioned according to a certain point process. Then each child gives birth at time $2$ to offspring positioned around their parent according to an i.i.d.\@ copy of that point process. The process continues, each generation of individuals giving birth independently to children positioned around their parent, according to a shifted copy of the same point process.

To describe this process, we introduce some notation. We denote by
\[
  \mathcal{P} = \left\{ \x=(x_1, x_2,\ldots) \in (-\infty,\infty]^\N : x_1\le x_2\le \cdots \text{ and } \lim_{n \to \infty} x_n = \infty \right\}
\]
the space of non-decreasing sequences $\x$ on $(-\infty,\infty]$ that converge to $\infty$. Equivalently, these sequences can be identified with Radon point measures $\mu$ on $\R$ with finite mass on $(-\infty,0]$ through the identification
\[
  \mu = \sum_{n \geq 1} \ind{x_n < \infty}\delta_{x_n} \quad \iff \quad \x = \big(\inf\left\{  y \in \R : \mu((-\infty,y]) \geq n \right\}, n \geq 1\big).
\]
In particular, if $\mu = \sum_{j=1}^n x_j$, it is identified with the element $\x = (x_1,\ldots x_n,\infty,\infty,\cdots) \in \mathcal{P}$. In other words, with this notation the point $\infty$ is taken as a cemetery state, and the Dirac mass $\delta_\infty$ is identified to $0$. For $y \in \R$, we denote by $\tau_y \x = (x_n + y, n \geq 1)$ the translation operator on $\mathcal{P}$.

Then, we can describe the branching random walk as a $\mathcal{P}$-valued Markov process $(\X_n, n \geq 0)$ satisfying the branching property: for all $0 \leq k \leq n$, setting $\x = \X_k$, we have
\begin{equation}
\label{eqn:discreteBranching}
\X_{n} = \sum_{j=1}^\infty \tau_{x_j} \X^{(j)}_{n-k} \quad \text{ in law, where } \X^{(j)}_{n-k} \text{ are i.i.d.\ copies of } \X_{n-k}.
\end{equation}
Observe that if there exists $\theta \geq 0$ such that $c = \E\left(\int e^{-\theta x}\X_1(\dd x)\right)< \infty$, then we have $\X_k \in \mathcal{P}$ for all $k \in \N$ a.s.; indeed, by the branching property we then have $\E\left(\int e^{-\theta x}\X_k(\dd x)\right) = c^k$.

Branching Lévy processes are the continuous-time counterparts of branching random walks. A branching Lévy process (with possibly infinite birth intensity) is a continuous-time particle system on the real line, in which particles move according to i.i.d.\@ Lévy processes, and reproduce in a Poisson fashion, possibly on an everywhere dense countable set of times. Branching Lévy processes were introduced in \cite{Ber16}, as an intermediate tool to study homogeneous growth-fragmentation processes. They can be constructed as increasing limits of Uchiyama-type continuous-time branching random walks, introduced in \cite{Uch82}.

In \cite{BeM18}, it is proved that branching Lévy processes are the unique càdlàg (for the topology of vague convergence) $\mathcal{P}$-valued processes $\bfZ$ that satisfy the following two properties. 
\begin{description}
	\item[Branching property:] for all $0 \leq s \leq t$, setting $\mathbf{z}=\bfZ_s$,
	\begin{equation}
	\label{eqn:branching}
	\bfZ_t = \sum_{j=1}^\infty \tau_{z_j} \bfZ^{(j)}_{t-s} \text{ in law, where }\bfZ^{(j)}_{t-s} \text{ are i.i.d.\ copies of } \bfZ_{t-s}. 
	\end{equation}
	\item[Exponential integrability:] there exists $\theta > 0$ such that for all $t \geq 0$,
	\begin{equation}
	\label{eqn:expIntegrabilityBase}
	\E\left( \int e^{-\theta x} \bfZ_t(\dd x) \right) < \infty.
	\end{equation}
\end{description}
The class of branching Lévy processes generalizes several classical models, including the branching Brownian motion, continuous-type branching random walks \cite{Uch82}, branching L\'evy processes with finite birth intensity \cite{Kyp99}, and, up to an exponential transform, homogeneous fragmentations \cite{Ber01} and compensated growth-fragmentations \cite{Ber16}.
Observe that the branching property implies that for all $t>0$, the process $(\bfZ_{nt}, n \geq 0)$ is a branching random walk.

The law of a branching Lévy process is characterized by a triplet $(\sigma^2, a, \Lambda)$, with $\sigma^2 \geq 0$, $a \in \R$ and $\Lambda$ a sigma-finite measure on $\mathcal{P}$ with the following integrability conditions. We assume that
\begin{equation}
\label{eqn:levyEve}
\int_{\calP} 1 \wedge x_1^2 \Lambda(\dd \x) < \infty,
\end{equation}
and that there exists $\theta \geq 0$ satisfying
\begin{equation}
\label{eqn:expIntegrability}
\int_{\calP} \Big(e^{-\theta x_1} \ind{x_1 < - 1} + \sum_{j \geq 2} e^{-\theta x_j}\ind{x_j > -\infty}\Big) \Lambda(\dd \x) < \infty.
\end{equation}
Informally, the branching Lévy process can be described as follows: each particle moves independently according to a Lévy process with diffusion coefficient~$\sigma$, drift $a$, and Lévy jump measure the image of $\Lambda$ by the application $\x \mapsto x_1$. Simultaneously, the particle gives birth to children in a Poissonian fashion, in such a way that at rate $\Lambda(\dd \x)$, a particle makes a jump of $x_1$ while simultaneously giving birth to offspring positioned at distances $x_2,x_3,\ldots$ from the pre-jump position of the parent.

The set of individuals in a branching Lévy process can be indexed by using a generalization of the Ulam--Harris notation, introduced by Shi and Watson in \cite[Section 2]{ShW19} for the study of growth-fragmentation processes. This enumeration allows the description of the genealogy of particles in a branching Lévy process. For each $t \geq 0$, we denote by $\mathcal{N}_t$ the set of individuals alive at time $t$, and for all $s \leq t$, by $X_s(u)$ the position at time $s$ of a particle $u\in \mathcal{N}_t$ or its ancestor, if the particle is not born yet at time $s$.

Condition \eqref{eqn:levyEve} ensures that the trajectories of particles are well-defined Lévy processes. Condition \eqref{eqn:expIntegrability} implies \eqref{eqn:expIntegrabilityBase} (see \cite[Theorem 1.2(ii)]{BeM18}), which ensures that for all $t \!\geq\! 0$ the random measure $\sum_{u \in \mathcal{N}_t} \delta_{X_t(u)}$ almost surely belongs to $\mathcal{P}$.

\begin{remark}
	The definition we give here of a branching Lévy process differs from the ones in \cite{BeM18,BeM18b}. In those articles, the branching Lévy processes were constructed in such a way that they have finite mass on $[0,\infty)$ a.s., instead of $(-\infty,0]$. We choose to change this convention in order to be consistent with the corresponding results for branching random walks in the boundary case, described in e.g.\ \cite{BiK04}.
\end{remark}

Under assumption \eqref{eqn:expIntegrability},  for all $z \in \C$ with $\mathfrak{R}(z) = \theta$ and $t \geq 0$, we have by \cite[Theorem 1.2(ii)]{BeM18}
\begin{equation}
\label{eqn:cumulantProp}
\E\bigg( \sum_{u \in \mathcal{N}_t} e^{-z X_t(u)} \bigg) = \exp(t \kappa(z)),
\end{equation}
where
\begin{equation}
\label{eqn:cumulantDef}
\kappa(z) := \frac{\sigma^2 z^2}{2} - a z + \int_{\mathcal{P}} \sum_{j \geq 1} e^{-z x_j} - 1 + z x_1 \ind{|x_1| < 1} \Lambda(\dd \x)
\end{equation}
is called the cumulant generating function of the branching Lévy process. Equation \eqref{eqn:cumulantProp} and the branching property imply that 
\[
W_t(\theta) := \sum_{u \in \mathcal{N}_t} e^{-\theta X_t - t \kappa(\theta)}, \qquad t\ge 0
\]
is a non-negative martingale. As such, its limit $W_\infty := \lim_{t \to \infty} W_t$ exists a.s.\ and is finite. Whether the terminal value is degenerate (i.e.\ $W_\infty(\theta) = 0$ a.s.\@) or not has fundamental importance in the study of branching processes, and has been investigated by a considerable amount of literature.

Under the condition that $\kappa'(\theta) := \frac{1}{i}\frac{\dd }{\dd \xi} \kappa(\theta + i \xi)|_{\xi =0}$ exists and is finite, it is now well-known that
\begin{equation}
\label{eqn:supercriticalMartingale}
\theta \kappa'(\theta) < \kappa(\theta)
\end{equation}
is a necessary condition for the non-degeneracy of $W_\infty(\theta)$, and that, up to an additional integrability assumption on $\Lambda$, this condition is also sufficient. This result was first proved by Bigggins \cite{Big77} in the context of branching random walks, by McKean \cite{McK75} (see also Neveu \cite{Neu87}) for the branching Brownian motion (which is the only branching Lévy process with continuous trajectories).

Lyons \cite{Lyo97} then gave an elementary proof of this statement, by introducing the spinal decomposition, a critical tool in the study of branching processes. Bertoin and Mallein \cite{BeM18b} then adapted the proof of Lyons to the settings of branching Lévy processes. A necessary and sufficient condition for the non-degeneracy of $W_\infty$ regardless of the existence of $\kappa'(\theta)$ was obtained by Alsmeyer and Iksanov \cite{AlI09} for branching random walks, and by Iksanov and Mallein \cite{IkM18} for branching Lévy processes, by using result on the finiteness of perpetual integrals. 

In the boundary case, when \eqref{eqn:supercriticalMartingale} fails to hold, the role of the additive martingale is replaced by a different martingale, which is not uniformly integrable. More precisely, if there exists $\theta^* > 0$ such that
\begin{equation}
\label{eqn:criticalMartingale}
\theta^* \kappa'(\theta^*) = \kappa(\theta^*),
\end{equation}
then the martingale $W_t(\theta^*)$ converges to $0$ almost surely, as stated above. However, the process
\begin{equation}
\label{eqn:derivativeMartingale}
Z_t := \sum_{u \in \mathcal{N}_t} (\theta^* X_t(u) + t \kappa(\theta^*)) e^{-\theta^* X_t(u) - t \kappa(\theta^*)},\qquad t\ge 0
\end{equation}
is a signed martingale, often called the derivative martingale. The almost sure limit $Z_\infty = \lim_{t \to \infty} Z_t$, if existing, is non-negative.

Assuming in addition that 
\begin{equation}
\label{eqn:finiteVariance}
\kappa''(\theta^*) := \E\bigg( \sum_{u \in \mathcal{N}_1} \left(X_1(u) + \kappa'(\theta^*)\right)^2 e^{-\theta^* X_1(u) - \kappa(\theta^*)} \bigg) \in (0,\infty),
\end{equation}
Aïdékon \cite{Aid13} obtained sufficient integrability conditions for the non-de\-ge\-ne\-ra\-cy of $Z_\infty$ for branching random walks. These conditions were shown to be necessary by Chen \cite{Che15}, who proved that if they do not hold, the derivative martingale converges almost surely to $0$. For the branching Brownian motion, the optimal condition for the non-degeneracy of the limit of the derivative martingale was previously obtained by Yang and Ren \cite{YaR11}. It is worth noting that with a proper rescaling, the critical martingale $W_t$ will converge to $Z_\infty$ (see e.g.\ \cite{AiS14} for a proof in the context of branching random walks and \cite{BBM18} for a continuous-time extension).

When $Z_t$ converges to a non-degenerate limit $Z_\infty$, the random variable $Z_\infty$ is related to the maximal displacement of the branching process. Lalley and Sellke \cite{LaS87} showed a deep connection between the derivative martingale and the asymptotic behaviour of the maximal displacement of particles for the branching Brownian motion. More precisely, they showed that for all $y \in \R$, 
\[
\lim_{t \to \infty} \P\left(\max_{u \in \mathcal{N}_t} X_t(u) \leq \sqrt{2} t - \frac{3}{2\sqrt{2}} \log t + y \right) = \E\left( \exp\left( - C Z_\infty e^{-\sqrt{2}y}\right) \right),
\]
famously correcting an error in \cite{McK75}, who used the critical additive instead of the derivative martingale (see the survey of Biggins and Kyprianou \cite{BiK05} on that subject). The convergence in law of the maximum of a branching random walk was proved by Aïdékon \cite{Aid13}, which is again related to the derivative martingale. This result was then extrapolated to branching Lévy processes by Dadoun \cite{Dad17}. To sum up, obtaining necessary and sufficient conditions for the derivative martingale limit to be non-trivial is relevant to understand the asymptotic properties of extremal particles in branching processes.

For branching Lévy processes with one-sided jumps, Shi and Watson \cite{ShW19} obtained sufficient conditions for the convergence of the derivative martingale, by adapting spinal decomposition arguments dating back to \cite{LPP95,BiK04}. 
The aim of this work is to obtain optimal integrability conditions for general branching Lévy processes, which is, however, a much harder question.  Note that since $(\bfZ_n, n \geq 0)$ is a branching random walk, the optimal integrability conditions of Aïdékon and Chen allow us to immediately obtain a necessary and sufficient condition for the non-degeneracy of $Z_\infty$ in terms of the law of $\bfZ_1$. However, this condition does not translate easily in terms of conditions on $(\sigma^2,a,\Lambda)$, as the connection between the two quantities is intricate.

As a result, we instead refine the spinal decomposition method in \cite{ShW19} to prove the non-degeneracy of $Z_\infty$, for which we still have to overcome some substantial challenges. 
The problem relies on the analysis of a Lévy process $\xi$ conditioned to stay positive, in particular, a necessary and sufficient condition for the finiteness of $\int_0^{\infty} f(\xi_t) \dd t$ (a so-called \emph{perpetual integral}), which is, to our knowledge, not available in the literature.  
We build in the forthcoming Proposition~\ref{prop:finiteness} a novel zero-one law for the finiteness of such perpetuities of Lévy processes conditioned to be positive, by using new techniques from \cite{KoS19,BDK20}. This is another main contribution of this work and might be of independent interest.

\section{Main results}

The aim of this article is to obtain necessary and sufficient conditions for the non-degeneracy of $Z_\infty$, the limit of the derivative martingale in a branching Lévy process. We first observe that up to a space-time linear transform, the branching Lévy process can be assumed to be in the so-called \emph{boundary case}, which will simplify computations and notation later on.

More precisely, assuming \eqref{eqn:criticalMartingale} and \eqref{eqn:finiteVariance}, set for all $t \geq 0$, $u \in \mathcal{N}_t$ and $s \leq t$
\[
Y_s(u) = \theta^* X_s(u) + s \kappa(\theta^*),
\]
with $\theta^*$ the unique positive solution of the equation $\theta \kappa'(\theta) - \kappa(\theta) = 0$. One immediately obtains that $Y$ is a branching Lévy process with characteristic triplet $(\sigma^2_{\theta^*}, a_{\theta^*}, \Lambda_{\theta^*})$ which can be expressed explicitly in terms of $(\sigma^2, a, \Lambda)$ and $\theta^*$. Moreover, $\Lambda_{\theta^*}$ satisfies \eqref{eqn:levyEve} and \eqref{eqn:expIntegrability} with $\theta = 1$, and we have 
\begin{align*}
\log \E\bigg( \sum_{u \in \mathcal{N}_1} e^{-Y_1(u)} \bigg) &= \log \E\bigg( \sum_{u \in \mathcal{N}_1} e^{-\theta^* X_1(u)} \bigg) - \kappa(\theta^*) = 0,\\
\E\bigg( \sum_{u \in \mathcal{N}_1} Y_1(u)e^{-Y_1(u)} \bigg) &= \E\bigg( \sum_{u \in \mathcal{N}_1}(\theta^* X_1(u) + \kappa(\theta^*)) e^{-\theta^* X_1(u)- \kappa(\theta^*)} \bigg)\\
&= \theta^* \kappa'(\theta^*) - \kappa(\theta^*) = 0,\\
\text{and }\ \E\bigg( \sum_{u \in \mathcal{N}_1} Y_1(u)^2 e^{-Y_1(u)} \bigg) &= \E\bigg( \sum_{u \in \mathcal{N}_1} (\theta^* X_1(u) + \kappa(\theta^*))^2 e^{-\theta^* X_1(u) - \kappa(\theta^*)} \bigg) \\ 
& =  (\theta^*)^2\kappa''(\theta^*) \in (0, \infty).
\end{align*}

Therefore, up to a linear transformation, we can assume without loss of generality that the branching Lévy process $X$ satisfies
\begin{equation}
\label{eqn:H0}
\E\bigg( \sum_{u \in \mathcal{N}_1} e^{-X_1(u)} \bigg) = 1, \quad \E\bigg( \sum_{u \in \mathcal{N}_1} X_1(u) e^{-X_1(u)} \bigg) = 0.
\end{equation}
Then we say that $X$ is in \emph{the boundary case}, in the language of \cite{BiK05}.
The assumption \eqref{eqn:H0} is classical when studying spatial branching processes around their critical parameter. 
A classification of branching random walks that can be assumed to be in the boundary case is given in the appendix of the arXiv version of \cite{Jaf}, or in \cite{BeG11}. Moreover, we also assume that 
\begin{equation}
\label{eqn:var}
\E\bigg( \sum_{u \in \mathcal{N}_1} X_1(u)^2 e^{-X_1(u)} \bigg) \in (0,\infty),
\end{equation}
which is a second moment integrability condition, equivalent to \eqref{eqn:finiteVariance}.

Note that a branching Lévy process $X$ with characteristic triplet $(\sigma^2,a,\Lambda)$ satisfies \eqref{eqn:H0}, i.e. is in the boundary case, if and only if $(\sigma^2,a, \Lambda)$ satisfies
%
\begin{align}
\begin{cases} 
&\displaystyle\frac{\sigma^2}{2} = \displaystyle \int_{\mathcal{P}} \bigg( \Big(\sum_{j \geq 1}  (1 + x_j) e^{-x_j}\Big) - 1\bigg) \Lambda(\dd \x)\\
\label{eqn:H0bis}
&\displaystyle  a = \displaystyle \frac{\sigma^2}{2} + \int_{\mathcal{P}} \bigg( \Big( \sum_{j \geq 1} e^{-x_j} \Big) - 1 + x_1 \ind{|x_1| < 1} \bigg) \Lambda(\dd \x),
\end{cases}
\end{align}
or equivalently $\kappa(1) = \kappa'(1)=0$, in terms of the cumulant generating function $\kappa$ defined in \eqref{eqn:cumulantDef}. Similarly, \eqref{eqn:var} can be rewritten as 
\begin{equation}
\label{eqn:varbis}
\int_{\calP} \sum_{i \ge 1} x_i^2 e^{-x_i}\Lambda(d \x) < \infty \iff \kappa''(1) \in (0, \infty).
\end{equation}
We now state our main result.

\begin{theorem}
	\label{thm:main}
	Let $(X_t(u), u \in \mathcal{N}_t)_{t \geq 0}$ be a branching Lévy process satisfying \eqref{eqn:H0} and \eqref{eqn:var}, and $Z_t= \sum_{u \in \mathcal{N}_t} X_t(u) e^{-X_t(u)}$ for $t\ge 0$. The derivative martingale $(Z_t)_{t\ge 0}$ converges a.s.\@ to a non-negative non-trivial limit $Z_{\infty}$ if and only if
	\begin{equation}
	\label{H1}
	\int_{\calP} \left( Y(\x) \log_+(Y(\x)-1)^2 + \tilde{Y}(\x) \log_+(\tilde{Y}(\x) - 1) \right) \Lambda(d \x) < \infty, \tag{H}
	\end{equation}
	where $\log_+ : x \in [0,\infty) \mapsto \max(0,\log(x))$ and for any $\x \in \calP$, we have set
	\[
	Y(\x):= \sum_{i=1}^\infty e^{-x_i}
	\quad \text{and} \quad 
	\tilde{Y}(\x):= \sum_{i = 1}^{\infty} \ind{x_i\ge 0} x_i e^{-x_i}.
	\]
\end{theorem}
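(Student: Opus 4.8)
The plan is to reduce the theorem to the known optimal criterion of Aïdékon and Chen for branching random walks applied to the embedded walk $(\mathbf{Z}_n)_{n\ge 0}$, after first establishing — via the spinal decomposition and the new zero-one law of Proposition~\ref{prop:finiteness} — that the relevant integrability condition on $\bfZ_1$ translates into condition \eqref{H1} on the triplet. Recall that for a branching random walk in the boundary case with finite variance, the derivative martingale converges to a non-trivial limit if and only if $\E(\hat{X}\log_+^2 \hat{X}) + \E(\tilde{X}\log_+\tilde{X}) < \infty$, where $\hat X := \sum_i e^{-x_i}$ and $\tilde X := \sum_i x_i^+ e^{-x_i}$ are evaluated under the reproduction law. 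So the real content is: (i) show $Z_\infty$ non-trivial is equivalent to $Z_1^{(\mathrm{BRW})}$-type condition for the discrete skeleton, and (ii) show that this discrete condition is equivalent to \eqref{H1}.

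For step (i), I would use the spine change of measure: under the tilted measure $\hat{\mathbb{P}}$, the branching Lévy process is described by a distinguished trajectory (the spine), which is a Lévy process $\xi$ with a specific characteristic triplet derived from $(\sigma^2, a, \Lambda)$, along which particles are created at an enhanced rate; off the spine, the process evolves as an independent copy of the original branching Lévy process. The classical Lyons-type argument (as in \cite{BiK04,ShW19}) shows that $Z_\infty$ is non-trivial iff the spine trajectory $\xi$ conditioned to stay positive satisfies a perpetual-integral finiteness condition: roughly, $\int_0^\infty g(\xi_t)\, \dd t < \infty$ $\hat{\mathbb P}$-a.s. (conditioned to stay positive), where $g$ encodes the additional mass created by a branching event seen from the spine — precisely the contributions $Y(\x)$ and $\tilde Y(\x)$ appearing in \eqref{H1}. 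The zero-one law of Proposition~\ref{prop:finiteness} is exactly what converts the "a.s.\ finite / a.s.\ infinite" dichotomy into a deterministic integrability criterion, and matching $g$ against the scale/renewal function of $\xi$ conditioned to stay positive produces the $\log_+$ and $\log_+^2$ weights: the $\log_+^2$ factor attached to $Y(\x)$ comes from the second-order term in the Tauberian/renewal estimate for the centred random walk (this is the usual source of the $\log^2$ in Aïdékon's condition), while the single $\log_+$ attached to $\tilde Y(\x)$ comes from the linear tilt in the many-to-one formula.

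For step (ii) — or rather, doing (i) and (ii) simultaneously — the key computation is to identify, for a branching event governed by $\Lambda(\dd\x)$, the "cost" it contributes when viewed along the spine. A jump-and-branch event at spine position $y$ contributes an extra mass $\sum_{i\ge 2} e^{-(y+x_i)}\,e^{y} = e^{-y}\cdot(\text{something})$... more carefully, under the size-biased spine decomposition the relevant functional is $Y(\x) = \sum_i e^{-x_i}$ for the martingale-mass side and $\tilde Y(\x)$ for the derivative side, which is why these two functionals (and no others) appear. One then plugs $f(y) = \P(\text{a branch event "of size" producing } Y \text{ or } \tilde Y)$ into the perpetual-integral criterion for $\xi$ conditioned to stay positive; the renewal function $h$ of $\xi$ conditioned to stay positive grows linearly, $h(y) \asymp y$, and the finiteness of $\int_0^\infty f(h^{-1}(\cdot))\cdots$ unwinds to $\int_\calP Y\log_+^2(Y-1) + \tilde Y \log_+(\tilde Y -1)\,\dd\Lambda < \infty$ after the standard change of variables $u = \log Y$ that turns a size into an additive displacement along the spine.

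The main obstacle I expect is \textbf{step (i)} combined with the application of Proposition~\ref{prop:finiteness}: showing that the non-degeneracy of $Z_\infty$ is genuinely controlled by a single perpetual integral of $\xi$ conditioned to stay positive, with no loss, in \emph{both} directions. The "sufficient" direction (condition $\Rightarrow$ non-degeneracy) follows a truncation/second-moment or measure-change argument along the lines of \cite{ShW19}, but the "necessary" direction (failure of the condition $\Rightarrow$ $Z_\infty = 0$) requires a spine-based lower bound showing that when the perpetual integral diverges, the spine mass explodes along the conditioned trajectory, forcing $Z_t \to 0$; this is where the full strength of the zero-one law is used, because one must rule out the possibility that the integral is finite on a set of positive probability. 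Handling the continuum of branching times (the birth intensity $\Lambda$ may be infinite) rather than a discrete generation structure, and controlling the Lévy-process spine — which can have unbounded variation and two-sided jumps, unlike the one-sided case of \cite{ShW19} — is the technical heart of the argument, and is precisely what forces the new perpetual-integral result rather than allowing a direct appeal to the literature.
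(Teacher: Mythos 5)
Your high-level ingredients (spinal decomposition, a perpetual integral of the spine conditioned to stay positive, and the zero-one law of Proposition~\ref{prop:finiteness}) are the right ones, but the proposal has a genuine gap at its announced starting point. Your step (i)/(ii) --- apply the Aïdékon--Chen criterion to the embedded branching random walk $(\bfZ_n)_{n\ge 0}$ and then ``translate'' the resulting condition on the law of $\bfZ_1$ into \eqref{H1} --- is precisely the route this paper explains cannot be taken directly: the reproduction law of $\bfZ_1$ is a Poissonian superposition of $\Lambda$-events decorated by independent copies of the branching Lévy process run for the remaining time, and proving that the $\E\big(W_1\log_+(W_1)^2\big)$-type moment conditions for this superposition are finite if and only if \eqref{H1} holds is itself a substantial problem which you never address. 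When you then pivot to ``doing (i) and (ii) simultaneously'' via the spine, the sketch stays at the level of a plan exactly where the work is: the claim that non-degeneracy of $Z_\infty$ is equivalent to finiteness of a single perpetual integral is not something you can change measure to prove, because $Z$ is a signed martingale; one must first introduce the truncated martingales $Z^b$ built from the renewal function $R$ of the spine and reduce the theorem to their uniform integrability (Lemma~\ref{lem:asymptotTruncated}, Corollary~\ref{cor:ncandsc}), a step absent from your proposal.

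Beyond that missing reduction, both directions require devices your sketch does not supply. In the sufficient direction, after conditioning on the spine one must split the branching events according to whether $\bar{Y}(\x)$ is smaller or larger than $e^{(\hat{\xi}+b)/3}+1$; the two pieces are controlled by two different estimates (Lemma~\ref{lem:integral} applied to two different integrands), and this split --- not a ``second-order Tauberian term'' --- is what produces the $\log_+^2$ weight on $Y$ versus the single $\log_+$ on $\tilde{Y}$. In the necessary direction, the quantity that should diverge is a sum over the atoms of the spine's birth point process, not a perpetual integral; converting it into one requires a time-discretization along stopping times together with a conditional Borel--Cantelli argument (Lemma~\ref{lem:LouisChen}), and even then the natural integrand fails to be eventually monotone, so Proposition~\ref{prop:finiteness} cannot be invoked once: one needs several monotone minorants giving the successive tests \eqref{eqn:Int1}--\eqref{eqn:Int3} before \eqref{H1} follows. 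None of these steps is routine, and the proposal gives no indication of how they would be carried out.
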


\begin{remark}
	\label{rem:nonTrivial}
	By the branching property of a branching Lévy process, if $Z_\infty$ exists, then the event $\{Z_\infty \!=\! 0\}$ is an inherited property of the underlying Galton-Watson tree (see \cite[Discussion 5.4]{ShiSF15} for a related argument for the discrete-time branching random walk). As a result, either $Z_\infty = 0$ a.s.\@ or $Z_\infty > 0$ a.s.\@ on the survival set $\{\mathcal{N}_t\ne \emptyset, \forall t\ge 0\}$ of the branching Lévy process.
\end{remark}

As observed above, the convergence of the derivative martingale in a generic branching Lévy process can be obtained from Theorem~\ref{thm:main}. We state here the result for a general branching L\'evy process, without assuming the boundary condition.
\begin{theorem}
	\label{thm:main-general}
	Let $(X_t(u), u \in \mathcal{N}_t)_{t \geq 0}$ be a branching Lévy process satisfying \eqref{eqn:criticalMartingale} and \eqref{eqn:finiteVariance}, and $Z_t=  \sum_{u \in \mathcal{N}_t} (\theta^* X_t(u) + t \kappa(\theta^*)) e^{-\theta^* X_t(u) - t \kappa(\theta^*)}$ for $t\ge 0$. The derivative martingale $(Z_t)_{t\ge 0}$ converges a.s.\@ to a non-negative non-trivial limit $Z_{\infty}$ if and only if
	\begin{equation}
		\label{H1bis}
		\int_{\calP} \left( Y(\x) \log_+(Y(\x)-1)^2 + \tilde{Y}(\x) \log_+(\tilde{Y}(\x) - 1) \right) \Lambda(d \x) < \infty, \tag{H*}
	\end{equation}
	where $\log_+ : x \in [0,\infty) \mapsto \max(0,\log(x))$ and for any $\x \in \calP$, we have set
	\[
	Y(\x):= \sum_{i=1}^\infty e^{-\theta^* x_i}
	\quad \text{and} \quad 
	\tilde{Y}(\x):= \sum_{i = 1}^{\infty} \ind{x_i\ge 0} \theta^* x_i e^{- \theta^* x_i}.
	\]
\end{theorem}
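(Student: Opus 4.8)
\textbf{Proof proposal for Theorem~\ref{thm:main-general}.}

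The plan is to reduce Theorem~\ref{thm:main-general} to Theorem~\ref{thm:main} via the explicit linear space-time transformation already described in the text preceding the statement. First I would set, for all $t\ge 0$, $u\in\mathcal N_t$ and $s\le t$,
\[
Y_s(u) := \theta^* X_s(u) + s\,\kappa(\theta^*),
\]
and verify, exactly as in the computation displayed before Theorem~\ref{thm:main}, that $(Y_s(u), u\in\mathcal N_s)_{s\ge 0}$ is again a branching Lévy process; this uses that a deterministic affine map applied to positions preserves the branching property \eqref{eqn:branching} and the exponential integrability \eqref{eqn:expIntegrabilityBase}. Its characteristic triplet $(\sigma^2_{\theta^*}, a_{\theta^*}, \Lambda_{\theta^*})$ is obtained from $(\sigma^2,a,\Lambda)$ by the tilt-and-scale formulas: $\Lambda_{\theta^*}$ is the pushforward of $\Lambda$ under the map $\x\mapsto (\theta^* x_i + \kappa(\theta^*)\cdot 0)_{i\ge 1}$ (a pure rescaling of each coordinate by $\theta^*$, since the deterministic drift $s\kappa(\theta^*)$ is absorbed into $a_{\theta^*}$), together with an exponential reweighting coming from \eqref{eqn:cumulantProp}. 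The key identities, already recorded in the excerpt, are $\kappa_Y(1)=\kappa_Y'(1)=0$ and $\kappa_Y''(1)=(\theta^*)^2\kappa''(\theta^*)\in(0,\infty)$, so $Y$ satisfies \eqref{eqn:H0} and \eqref{eqn:var}; hence Theorem~\ref{thm:main} applies to $Y$.

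Next I would match up the derivative martingales. With $Z^Y_t := \sum_{u\in\mathcal N_t} Y_t(u) e^{-Y_t(u)}$ one has, by the definition of $Y_t(u)$,
\[
Z^Y_t = \sum_{u\in\mathcal N_t} \bigl(\theta^* X_t(u) + t\kappa(\theta^*)\bigr) e^{-\theta^* X_t(u) - t\kappa(\theta^*)} = Z_t,
\]
so the two derivative martingales coincide as processes (not merely up to a constant), and therefore $Z_t$ converges a.s.\ to a non-trivial non-negative limit if and only if $Z^Y_t$ does. By Theorem~\ref{thm:main}, the latter holds if and only if
\[
\int_{\calP}\Bigl(Y_Y(\x)\log_+(Y_Y(\x)-1)^2 + \tilde Y_Y(\x)\log_+(\tilde Y_Y(\x)-1)\Bigr)\,\Lambda_{\theta^*}(\dd\x) < \infty,
\]
where $Y_Y(\x)=\sum_i e^{-x_i}$ and $\tilde Y_Y(\x)=\sum_i \ind{x_i\ge 0} x_i e^{-x_i}$ are the functionals of Theorem~\ref{thm:main} evaluated for the triplet of $Y$.

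The last step is a change of variables in the integral. Since $\Lambda_{\theta^*}$ is (the exponential reweighting built into the functional cancels against the $e^{-x_i}$ weights — more precisely the measure appearing in the cumulant and in \eqref{eqn:H0bis} for $Y$ is exactly the pushforward of $\Lambda$ under coordinatewise multiplication by $\theta^*$), substituting $x_i \mapsto \theta^* x_i$ turns $Y_Y$ evaluated at the pushed-forward point into $\sum_i e^{-\theta^* x_i} = Y(\x)$ of Theorem~\ref{thm:main-general}, and likewise $\tilde Y_Y$ into $\sum_i \ind{x_i\ge 0}\theta^* x_i e^{-\theta^* x_i} = \tilde Y(\x)$. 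Hence condition \eqref{H1} for $\Lambda_{\theta^*}$ becomes precisely condition \eqref{H1bis} for $\Lambda$, completing the equivalence. I do not expect any genuine obstacle here: the whole argument is the bookkeeping of an explicit affine change of coordinates, and the only point requiring a little care is writing down $\Lambda_{\theta^*}$ precisely and checking that the exponential reweighting in the pushforward interacts correctly with the $\log_+$ functionals so that the change of variables is exact rather than merely up to constants; this is a direct consequence of \eqref{eqn:cumulantProp}–\eqref{eqn:cumulantDef} and the formulas \eqref{eqn:H0bis}.
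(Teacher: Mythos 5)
Your proposal is correct and follows essentially the same route as the paper: reduce to the boundary case by setting $X'_s(u)=\theta^* X_s(u)+s\kappa(\theta^*)$, note that its Lévy measure is the pushforward of $\Lambda$ under $\x\mapsto\theta^*\x$, observe the derivative martingales coincide, and translate condition \eqref{H1} into \eqref{H1bis} by change of variables. The only wrinkle is your repeated mention of an ``exponential reweighting'' of the Lévy measure, which does not occur for this deterministic affine transformation — but you ultimately state the correct fact that $\Lambda_{\theta^*}$ is just the pushforward, so this does not affect the argument.
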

\begin{proof}
	Consider the branching Lévy process 
$
	(X'_s(u) := \theta^* X_s(u) + s \kappa(\theta^*), t\ge 0, u \in \mathcal{N}_t, s\le t). 
$
In particular, $X'$ satisfies the boundary condition and its L\'evy measure $\Lambda'$ is the pushforward of $\Lambda$ via the function $\x\mapsto \theta^* \x$. 
By Theorem~\ref{thm:main}, the derivative martingale of $X'$, which is identical to that of $X$,  converges a.s.\@ to a non-negative non-trivial limit if and only if \eqref{H1} holds for $\Lambda'$. Rewriting this condition in terms of $\Lambda$ leads to the result.  
\end{proof}

As we consider a general class of branching L\'evy processes, Theorem~\ref{thm:main} generalizes a few sufficient conditions previously obtained in the literature: \cite{BeR05} for fragmentations, \cite{ShW19} for growth-fragmentations, and \cite{CheShe} for branching L\'evy processes with finite birth rate. 
It also generalizes the necessary and sufficient condition obtained by \cite{YaR11} for branching Brownian motion. 

The rest of the article is organised as follows. We introduce some well-known fact on Lévy processes conditioned to stay positive in the next section, and establish a necessary and sufficient condition for finiteness of associated perpetual integrals. In Section~\ref{sec:truncatedMartingale}, we introduce the spinal decomposition of the branching Lévy process associated to the additive and the derivative martingales. In Section~\ref{sec:sufficient}, we prove that under assumptions \eqref{eqn:H0} and \eqref{eqn:var}, \eqref{H1} implies that the derivative martingale converges to a non-degenerate limit using the same classical argument as in \cite{Lyo97}. Finally, we prove the ``necessary part'' of Theorem~\ref{thm:main} in Section~\ref{sec:necessary}.

\section{The many-to-one lemma and Lévy processes conditioned to stay positive}
\label{sec:manyToOne}

We present in this section the many-to-one lemma, that links first moments of additive functionals of a branching Lévy process with the law of an associated Lévy process. Then, we introduce some estimates on Lévy processes, in particular defining the law of the Lévy process conditioned to stay positive. We end this section by establishing a novel necessary and sufficient condition for the finiteness of a perpetual integral of the Lévy process conditioned to stay positive. 

\subsection{The many-to-one lemma}
\label{subsec:manyToOne}

One can observe that for all $r \in \R$, the function
$
\Psi : r \in \R \mapsto \kappa(1 + ir)
$
can be rewritten as
\begin{equation}
\label{eqn:defPsi}
\Psi(r) = -\frac{\sigma^2}{2} r^2 + i \hat{a} r + \int_{\R} e^{i r x} - 1 + r x \ind{|x|<1} \pi(\dd x),
\end{equation}
where
\begin{equation}
\label{eqn:defhata}
\hat{a} = a - \sigma^2 + \int_{\mathcal{P}} \sum_{j \geq 1} x_j e^{-x_j} \ind{|x_j|<1} - x_1 \ind{|x_1|<1} \Lambda(\dd \x), 
\end{equation}
and $\pi$ is the sigma-finite measure on $\R$ satisfying for all measurable non-negative functions $f$:
\[
\int_{\R} f(x) \pi (\dd x) = \int_{\mathcal{P}} \sum_{j \geq 1} f(x_j) e^{- x_j} \Lambda(\dd \x).
\]
In other words, the function $\Psi$ can be seen as the Lévy-Khinchine exponent of a Lévy process $\xi$ with diffusion term $\sigma^2$, drift $\hat{a}$ and jump measure $\pi$. 
This fact can be related to the celebrated many-to-one lemma in the context of branching random walks, which can be tracked back at least to the work of Kahane and Peyrière \cite{Pey74,KaP76}: 
roughly speaking, the many-to-one lemma links additive moments of a branching random walk with random walk estimates.
\begin{lemma}[Many-to-one lemma]
	\label{lem:manytoone}
	Let $(X_t(u), u \in \mathcal{N}_t)_{t \geq 0}$ be a branching Lévy process satisfying \eqref{eqn:H0} and $\xi$ a Lévy process with Lévy-Khin\-chi\-ne exponent $\Psi$. For any non-negative measurable function $f$ and $t \geq 0$, we have
	\[
	\E\bigg( \sum_{u \in \mathcal{N}_t} f(X_s(u), s \leq t) \bigg) = \E\left( e^{\xi_t} f(\xi_s, s \leq t) \right).
	\]
\end{lemma}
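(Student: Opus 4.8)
The plan is to derive the many-to-one identity from the defining properties of the branching Lévy process, in two stages: first prove it for a dense set of times via the branching-random-walk structure, then pass to general $t$ by a monotone-class / approximation argument. The key observation is that the formula \eqref{eqn:cumulantProp} already gives the claimed identity in the special case $f(\x_{[0,t]}) = e^{-i r X_t}$, since $\E(\sum_{u} e^{-(1+ir)X_t(u)}) = \exp(t\kappa(1+ir)) = \exp(t\Psi(r))$, while the right-hand side $\E(e^{\xi_t} e^{-ir\xi_t}) = \E(e^{(1-ir)\xi_t})$, and a Lévy process with exponent $\Psi$ satisfies $\E(e^{(1-ir)\xi_t}) = \exp(t\Psi(r))$ — here one uses that $\Psi$, being the Lévy--Khinchine exponent with the specified triplet $(\sigma^2,\hat a,\pi)$, extends to an analytic function on the strip and that \eqref{eqn:expIntegrability} guarantees the relevant exponential moments are finite. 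So the two sides agree as complex measures on the one-dimensional marginal $X_t$; the real content is upgrading this to the full path functional.

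Concretely, I would proceed as follows. Fix $t>0$ and a dyadic mesh $0 = t_0 < t_1 < \cdots < t_n = t$. By the branching property \eqref{eqn:branching}, the sequence $(\X_{t_k})_{0\le k\le n}$, or more precisely the marked tree recording positions at times $t_1,\ldots,t_n$, is built recursively: each particle alive at time $t_k$ independently produces a copy of the time-$t_{k+1}-t_k$ configuration, translated by its position. Define the tilted measure on such finite-depth marked trees. A standard spine/change-of-measure induction on $n$ — peeling off one generation at a time and using the single-time identity $\E(\sum_{u\in\mathcal N_{t_{k+1}-t_k}} g(X_{t_{k+1}-t_k}(u)) e^{-X(u)}) = \E(g(\xi_{t_{k+1}-t_k}))$ as the base case, which itself follows from the computation in the previous paragraph applied to $g$ ranging over exponentials $e^{-irx}$ and then extended to all bounded measurable $g$ by Fourier inversion / monotone class — yields
\[
\E\bigg(\sum_{u\in\mathcal N_t} h\big(X_{t_1}(u),\ldots,X_{t_n}(u)\big)\bigg) = \E\big(e^{\xi_t}\, h(\xi_{t_1},\ldots,\xi_{t_n})\big)
\]
for every bounded measurable $h:\R^n\to\R$, using that $\xi$ has independent stationary increments so its finite-dimensional law is the $n$-fold convolution structure matching the tree recursion. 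The factor $e^{\xi_t}$ on the right arises because the tilting at each generation contributes $e^{-X(u)}$, and the product telescopes to $e^{-X_t(u)}$ under the original measure, i.e.\ the Radon--Nikodym weight is exactly $e^{-X_t}$ on the many-to-one side, equivalently $e^{\xi_t}$ after the change of measure.

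Finally, I would remove the restriction to finitely many times. The collection of cylinder functionals $f(X_s(u), s\le t) = h(X_{t_1}(u),\ldots,X_{t_n}(u))$ with $t_i$ dyadic generates, after taking limits, all bounded continuous functionals of the càdlàg path $(X_s(u), s\le t)$ in the Skorokhod topology (both the branching Lévy process trajectories and $\xi$ are càdlàg), so a monotone-class argument extends the identity to all bounded measurable $f$, and then monotone convergence extends it to all non-negative measurable $f$. I expect the main obstacle to be a careful treatment of the base case: justifying the single-time many-to-one identity for general test functions when the branching is of infinite intensity — one must check that $\E(\sum_{u\in\mathcal N_t} |g(X_t(u))| e^{-X_t(u)}) < \infty$ so that Fourier inversion is legitimate, which is exactly what \eqref{eqn:expIntegrability}/\eqref{eqn:expIntegrabilityBase} and the finiteness of $\exp(t\kappa(1))$ provide (and here $\kappa(1)=0$ under \eqref{eqn:H0}, so $\E(\sum_u e^{-X_t(u)}) = 1$, confirming $\P$-a.s.\ finiteness and that the tilted object is a genuine probability measure). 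The rest is bookkeeping of the tree recursion against the increment structure of $\xi$.
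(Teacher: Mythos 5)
The paper gives no proof of this lemma itself: it refers to \cite[Lemma 2.2]{BeM18} for the proof and to \cite{BeM18b} for the computation of $\Psi$. Your three-step route --- identify the one-dimensional marginal via \eqref{eqn:cumulantProp} and characteristic functions, pass to finite-dimensional marginals by induction along a mesh using the branching property (in its genealogical form provided by the Ulam--Harris construction of \cite{ShW19}) together with the stationary independent increments of $\xi$, then extend to path functionals by a monotone-class argument --- is essentially that standard argument, and it is sound provided the weight $e^{-X_t(u)}$ is carried along at every stage: the objects to compare are the probability measure $A \mapsto \E\big(\sum_{u \in \mathcal{N}_t} \ind{(X_s(u), s \leq t) \in A} e^{-X_t(u)}\big)$ (total mass $e^{t\kappa(1)} = 1$ by \eqref{eqn:H0}) and the law of $(\xi_s, s \leq t)$; your induction shows they agree on cylinder sets, Dynkin's lemma gives equality on the path $\sigma$-field, and the lemma for non-negative $f$ follows by integrating $w \mapsto e^{w_t} f(w)$ against both and monotone convergence. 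Your second and third paragraphs do in effect run the argument in this tilted form, which is important because the untilted quantities (e.g.\ $\E(\#\mathcal{N}_t)$) may be infinite when the birth intensity is infinite.

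The one concrete misstep is in your opening paragraph. With $f = e^{-irX_t}$ the left-hand side is $\E\big(\sum_u e^{-irX_t(u)}\big)$, not $\E\big(\sum_u e^{-(1+ir)X_t(u)}\big)$, and it need not even be well defined for infinite birth intensity; moreover the claim $\E\big(e^{(1-ir)\xi_t}\big) = \exp(t\Psi(r))$ by analytic continuation is unjustified and in general false: it would require $\E(e^{\xi_t}) = e^{t\kappa(2)} < \infty$, which \eqref{eqn:expIntegrability} at $\theta = 1$ does not provide, and even when finite the value is $e^{t\kappa(2 - ir)}$ rather than $e^{t\kappa(1+ir)}$. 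The correct special case is the tilted exponential $f\big((x_s)_{s\le t}\big) = e^{-(1+ir)x_t}$: then the left-hand side equals $e^{t\kappa(1+ir)} = e^{t\Psi(r)}$ by \eqref{eqn:cumulantProp}, while the right-hand side is just the characteristic function $\E(e^{-ir\xi_t})$ of $\xi_t$ (matching $\Psi$ up to the sign convention in \eqref{eqn:defPsi}), so no exponential moments or continuation are needed. Since this tilted version is exactly what you use as the base case of your induction, the slip does not damage the architecture, but as written that step would fail and should be replaced by the tilted computation.
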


We refer to \cite[Lemma 2.2]{BeM18} for a proof in branching Lévy processes settings, and to the proof of \cite[Lemma 2.2]{BeM18b} for the computation of $\Psi$. The many-to-one lemma can be thought of as a preliminary version of the spinal decomposition, that we describe in Section~\ref{sec:spinalDecompositionProper}.

\subsection{Lévy processes conditioned to stay positive}\label{sec:levy-cond}

In this section, we denote by $\xi$ a Lévy process with Lévy-Khinchine exponent $\Psi$ given by \eqref{eqn:defPsi}. By \eqref{eqn:H0}, \eqref{eqn:var} and the many-to-one lemma, this Lévy process is centred with finite variance. In particular, it is oscillating, i.e.
\[
\limsup_{t \to \infty} \xi_t = \limsup_{t \to \infty} -\xi_t = \infty \qquad \text{a.s..}
\]
We recall in this section the definition of the law of $\xi$ conditioned to stay above a given level. We refer to \cite[Section 2]{ChD05} for a self-contained construction of this law. 

For all $x \in \R$, we denote by $\tau_x := \inf\{ t \geq 0 : \xi_t < x\}$ the first passage time below level $x$ of $\xi$, and $\P_x$ the law of $(x+\xi_t, t \geq 0)$, a Lévy process with Lévy-Khinchine exponent $\Psi$ starting from $x$. To simplify notation, we also write $\tau := \tau_0$ and $\P := \P_0$. We introduce the renewal function associated to this process, defined for all $x \geq 0$ by
\begin{equation}
\label{eqn:renewalFunction}
R(x) = \E\left(\int_0^\infty \ind{\tau_{-x} > t} \dd L_t \right),
\end{equation}
where $L$ is the local time at zero of the reflected process $(\xi_t- \inf_{s \leq t} \xi_s, t \geq 0)$. We extend this definition by setting $R(x) = 0$ for all $x < 0$. Since $\xi$ does not drift to $-\infty$ (i.e.\ it is not the case that $\lim_{t \to \infty} \xi_t =-\infty$ a.s.), we know from \cite[Lemma~1]{ChD05} that the function $R$ satisfies for all $x \in \R$ and $t \geq 0$
\begin{equation}
\label{eqn:formulaRenewal}
R(x) = \E\left( R(\xi_t + x) \ind{\tau_{-x} > t}  \right) = \E_x(R(\xi_t) \ind{\tau>t}).
\end{equation}

Let us recall some additional properties of $R$. If $0$ is a regular point for the Lévy process $\xi$ (i.e.\ $\inf\{t>0\colon \xi_t = 0\} = 0$ a.s.), then $R(0)=0$, otherwise we normalize $L$ such that $R(0)=1$. Moreover, the function $R$ is finite, continuous, increasing, and $x \mapsto R(x)-R(0)$ is sub-additive on $[0,\infty)$. As $\xi$ has zero mean and finite variance, there exist $0 < c_1 < c_\star < c_2< \infty$ such that
\begin{equation}
\label{eqn:estimatesRenewal}
c_1 x \leq R(x) \leq c_2 (x+1) \quad \text{and} \lim_{x \to \infty} \frac{R(x)}{x} = c_\star.
\end{equation}
See \cite[Theorem~I.21]{Ber96} and \cite[Theorem~7]{DoM02}.

By \eqref{eqn:formulaRenewal}, the process $\big(\!\frac{R(\xi_t+x)}{R(x)} \ind{\tau_{-x} > t}, t \geq 0 \big)$ is a non-negative $\P$-martingale. 
Let $(\mathcal{F}_t)$ be the filtration associated to $\xi$. For all $x > 0$, we denote by $\P^\uparrow_x$ the law defined for all $t \geq 0$ by
\begin{equation}
\label{eqn:lawConditioned}
\left. \P^\uparrow_x \right|_{\mathcal{F}_t} = \frac{R(\xi_t)}{R(x)} \ind{\tau > t}\cdot \left. \P_x \right|_{\mathcal{F}_t}. 
\end{equation}
The probability $\P^\uparrow_x$ is, in the sense of Doob's $h$-transform, the law of the Lévy process $\xi$ started from $x$ conditioned to stay positive.


Lévy processes conditioned to stay positive have been the subject of a large literature. We simply recall from \cite[Proposition~1]{ChD05} that, for all $x > 0$,  the process $\xi$ is transient under ${\P}^\uparrow_x$. Moreover, \cite[Theorem~1]{ChD05} gives the following path-decomposition at the last passage time of the overall minimum. 
\begin{lemma}[{\cite[Theorem~1]{ChD05}}]
	\label{lem:chd}
	We set
	$
	\underline{\xi} = \inf_{s \geq 0} \xi_s$ and $m = \sup\{ t \geq 0 : \min(\xi_t,\xi_{t-}) = \underline{\xi} \}.
	$
	For all $x>0$ and $0 \leq y \leq x $, we have
	\[
	\P^{\uparrow}_x (\underline{\xi} \ge y ) = \frac{R(x-y)}{R(x)} \ind{y\le x}.
	\]
	Moreover, the process $(\xi_{s+m}-\underline{\xi}, s \geq 0)$ is independent of $(\xi_s, s \leq m)$; the law of the former process, that we write $\P^\uparrow$, does not depend on $x$.
\end{lemma}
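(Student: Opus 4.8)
The final statement to prove is Lemma~\ref{lem:chd}, which is attributed to Chaumont and Doney \cite[Theorem~1]{ChD05}. Since this is a cited result, my proposal is to reconstruct the argument rather than claim originality.

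\medskip

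\textbf{Plan of proof.} The plan is to establish the two assertions separately: first the explicit formula for the law of the overall infimum $\underline\xi$ under $\P^\uparrow_x$, then the Markov-type path decomposition at the last time $m$ the infimum is attained. For the first part, I would argue directly from the $h$-transform definition \eqref{eqn:lawConditioned}. Fix $0 \le y \le x$ and consider the event $\{\tau_y > t\} = \{\inf_{s \le t}\xi_s \ge y\}$ under $\P_x$. On the one hand, by optional stopping applied to the $\P$-martingale $\big(\frac{R(\xi_t)}{R(x)}\ind{\tau > t}\big)$ (using the harmonicity relation \eqref{eqn:formulaRenewal}), one gets $\P^\uparrow_x(\tau_y > t) = \E_x\big(\frac{R(\xi_t)}{R(x)}\ind{\tau_y > t}\big)$ after a shift argument. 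Here the key observation is that on $\{\tau_y > t\}$ the process $(\xi_s - y, s \le t)$ stays positive, so applying \eqref{eqn:formulaRenewal} to the shifted process at level $x - y$ gives $\E_x\big(R(\xi_t - y)\ind{\tau_y > t}\big) = R(x-y)$. The difference $R(\xi_t) - R(\xi_t - y)$ is controlled using the estimates \eqref{eqn:estimatesRenewal} (so that $R(\xi_t)/\xi_t \to c_\star$) together with the transience of $\xi$ under $\P^\uparrow_x$, which forces $\xi_t \to \infty$ $\P^\uparrow_x$-a.s.; letting $t \to \infty$ yields $\P^\uparrow_x(\underline\xi \ge y) = R(x-y)/R(x)$.

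\medskip

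\textbf{The path decomposition at $m$.} For the second assertion, I would use the time-reversal / duality structure underlying the process conditioned to stay positive. The standard approach is to decompose the trajectory at the last-passage time $m$ of the overall minimum $\underline\xi$. The pre-$m$ part $(\xi_s, s \le m)$ records the path down to its eventual infimum, while the post-$m$ part $(\xi_{s+m} - \underline\xi, s \ge 0)$ is, by the strong Markov property applied at a co-optional time and the renewal structure of ladder processes, a copy of $\xi$ conditioned to stay \emph{strictly} positive started from $0^+$, whose law I would denote $\P^\uparrow$ and show to be independent of $x$ — this follows because once one conditions on the value $\underline\xi = v$, the post-minimum excursion only depends on the local behaviour of $\xi$ above its running infimum, which is governed by the (dual) ladder height process and hence by $R$ alone, not by the starting point $x$. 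The independence of pre-$m$ and post-$m$ pieces is then a consequence of the splitting property at $m$, analogous to the Williams-type decomposition of Brownian motion and Bessel processes at their minimum.

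\medskip

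\textbf{Main obstacle.} The delicate point is the second assertion: making rigorous the claim that $m < \infty$ $\P^\uparrow_x$-a.s.\ (which uses transience), that $m$ is indeed a (co-optional, randomized) time to which a Markov-type splitting applies, and that the post-$m$ law $\P^\uparrow$ genuinely does not depend on $x$. The cleanest route, following \cite{ChD05}, is to realize $\P^\uparrow_x$ as an $h$-transform of the \emph{killed} process and to use the known identification of $\P^\uparrow$ via the excursion theory of $\xi$ reflected at its infimum, together with the fact that $R$ is invariant (up to the harmonic relation \eqref{eqn:formulaRenewal}) under this excursion measure; the first assertion then drops out as the one-dimensional marginal of this decomposition. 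Since the result is quoted from \cite[Theorem~1]{ChD05}, I would in practice cite that reference for the proof and only sketch the $h$-transform computation giving the formula for $\P^\uparrow_x(\underline\xi \ge y)$, as this is the part directly used in the sequel.
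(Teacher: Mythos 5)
The paper does not prove this lemma at all: it is quoted verbatim from \cite[Theorem~1]{ChD05}, so your decision to defer to that reference is exactly the paper's approach. Your supplementary sketch of the first assertion is also sound — writing $\P^\uparrow_x(\tau_y>t)=\tfrac{1}{R(x)}\E_x\big(R(\xi_t)\ind{\tau_y>t}\big)$, extracting $R(x-y)$ via the shifted harmonicity relation \eqref{eqn:formulaRenewal}, and killing the remainder $\E_x\big((R(\xi_t)-R(\xi_t-y))\ind{\tau_y>t}\big)$ using the subadditivity bound $R(\xi_t)-R(\xi_t-y)\le R(y)$ together with oscillation (equivalently, your dominated-convergence argument under $\P^\uparrow_x$ via transience) — and the path-decomposition part is correctly attributed to the excursion-theoretic argument of \cite{ChD05}.
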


\subsection{Perpetual integrals of a Lévy process conditioned to stay positive}
\label{subsec:perpetualIntegral}

Given $\xi$ a càdlàg process on $\R_+$ and $f$ a measurable function $\R_+ \to \R_+$ a perpetual integral of $\xi$ is a variable defined as $\int_0^\infty f(\xi_s)\dd s$.
The main result of this section is an integral criterion on $f$ for the finiteness of perpetual integrals of the Lévy process conditioned to stay positive. 
\begin{proposition}
	\label{prop:finiteness}
	Let $\xi$ be a centred Lévy process with finite variance starting from $x>0$. We denote by $\P^\uparrow_x$ the law of this process conditioned to stay positive, as defined in \eqref{eqn:lawConditioned}. For every eventually non-increasing non-negative bounded function $f\colon [0,\infty) \to [0,\infty)$, we have
	\begin{equation}
	\label{eqn:finiteness}
	\begin{split}
	\int_0^\infty f(\xi_s) \dd s < \infty \quad \P^\uparrow_x\text{-a.s.} &\iff \int_0^\infty yf(y )\dd y  < \infty\\
	\int_0^\infty f(\xi_s) \dd s = \infty \quad \P^\uparrow_x\text{-a.s.} &\iff \int_0^\infty yf(y) \dd y = \infty.
	\end{split}
	\end{equation}
\end{proposition}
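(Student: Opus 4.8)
The plan is to establish the two equivalences through a single zero–one law: since $\xi$ is transient under $\P_x^\uparrow$ and drifts to $+\infty$, the quantity $\int_0^\infty f(\xi_s)\,\dd s$ depends only on the behaviour of $f$ at infinity (as $f$ is eventually non-increasing and bounded, the contribution from any compact time interval is finite a.s.), so it is natural to expect the integral to obey a zero–one law, taking only the values $0$ and $\infty$ with probability zero of being finite-but-positive in the degenerate sense. First I would reduce to the ``canonical'' conditioned process. Using Lemma~\ref{lem:chd}, decompose the trajectory at the last time $m$ at which the overall minimum $\underline{\xi}$ is attained: the post-minimum process $(\xi_{s+m}-\underline{\xi},\,s\ge 0)$ has law $\P^\uparrow$ not depending on $x$, and the pre-minimum part $(\xi_s,\,s\le m)$ contributes only a finite amount to $\int_0^\infty f(\xi_s)\,\dd s$ since $f$ is bounded and $m<\infty$ a.s. Hence $\int_0^\infty f(\xi_s)\,\dd s<\infty$ a.s.\ under $\P_x^\uparrow$ iff $\int_0^\infty f(\zeta_s+\underline{\xi})\,\dd s<\infty$ a.s.\ under $\P^\uparrow$, where $\zeta$ denotes the canonical post-minimum process; and since $\underline{\xi}$ is a.s.\ finite and $f$ is eventually non-increasing, shifting the argument by the bounded random amount $\underline{\xi}-x$ does not affect finiteness (it only rescales by bounded factors after comparing $f(y)$ with $f(y+c)$ via monotonicity for large $y$). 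So it suffices to prove the criterion under $\P^\uparrow$.

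Next, I would compute the expected value of the perpetual integral under $\P^\uparrow$ and relate it to $\int_0^\infty y f(y)\,\dd y$. The key input is the potential (Green's) measure of $\xi$ under $\P_x^\uparrow$, or equivalently under $\P^\uparrow$: by the $h$-transform relation \eqref{eqn:lawConditioned} together with the renewal identity \eqref{eqn:formulaRenewal}, one has
\[
\E^\uparrow\!\left(\int_0^\infty f(\zeta_s)\,\dd s\right)=\int_0^\infty f(y)\,U^\uparrow(\dd y),
\]
where the potential measure $U^\uparrow$ of the conditioned process admits the representation $U^\uparrow(\dd y)=R(y)\,h(y)\,\dd y$ (up to the natural normalisation), with $h$ the density of the potential measure of the unconditioned process killed on entering $(-\infty,0)$; this is the classical description of the conditioned Lévy process, and by \eqref{eqn:estimatesRenewal} we have $R(y)\asymp y$ as $y\to\infty$, while $h(y)$ stays bounded above and below away from $0$ for large $y$ because $\xi$ is centred with finite variance (so the ascending ladder height process has finite mean and the corresponding renewal density converges to a positive constant). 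Consequently $U^\uparrow(\dd y)\asymp y\,\dd y$ at infinity, which yields $\E^\uparrow\!\big(\int_0^\infty f(\zeta_s)\,\dd s\big)<\infty \iff \int_0^\infty y f(y)\,\dd y<\infty$. This immediately gives the ``$\Leftarrow$'' direction of the first line of \eqref{eqn:finiteness} (finite expectation forces finite value a.s.) and, by the same reduction as above, for general starting point $x$.

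Finally, for the converse — that $\int_0^\infty y f(y)\,\dd y=\infty$ forces $\int_0^\infty f(\zeta_s)\,\dd s=\infty$ a.s. — a first-moment bound is not enough, and this is the step I expect to be the main obstacle. Here I would invoke the zero–one law: by transience of $\xi$ under $\P^\uparrow$ and the regeneration structure at successive record minima of the post-$m$ process (or equivalently, using the fact that on the canonical space the shifted processes $(\zeta_{s+T_n}-\zeta_{T_n})$ at a sequence of stopping times $T_n\to\infty$ at which $\zeta$ reaches new heights are again distributed as conditioned Lévy processes, independent of the past by the Markov property of $\P^\uparrow$), the event $\{\int_0^\infty f(\zeta_s)\,\dd s=\infty\}$ is a tail event, hence has probability $0$ or $1$. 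If it had probability $0$, then $\int_0^\infty f(\zeta_s)\,\dd s<\infty$ a.s., and a second-moment or truncation argument — decomposing the integral over the successive excursions of $\zeta$ between consecutive integer levels, on which the contributions are (roughly) independent with means comparable to $\sum_k f(k)\cdot(\text{time spent near level }k)\asymp \sum_k k f(k)$ — would contradict the divergence of $\int_0^\infty y f(y)\,\dd y$ via a Borel–Cantelli / Kolmogorov three-series type estimate. This is exactly where the ``new techniques from \cite{KoS19,BDK20}'' on perpetual integrals enter: one controls the amount of time the conditioned process spends in a band $[k,k+1]$ by the renewal-theoretic estimate $R(k)\asymp k$ and shows these occupation times do not decay, so that a divergent $\sum_k k f(k)$ produces infinitely many non-negligible contributions. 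Assembling the two directions and the zero–one dichotomy completes the proof.
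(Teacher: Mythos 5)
Your first-moment computation is sound and matches the paper's Lemma~\ref{lem:integral}: the Green measure of the conditioned process is comparable to $y\,\dd y$ at infinity, so $\E^\uparrow_x\big(\int_0^\infty f(\xi_s)\dd s\big)<\infty$ iff $\int_0^\infty yf(y)\dd y<\infty$, and finiteness of the mean gives the easy implication. But the two remaining steps, which carry all the difficulty, are not established. First, your zero--one law rests on the claim that $\{\int_0^\infty f(\zeta_s)\dd s=\infty\}$ is a tail event for a regeneration structure in which the shifted processes $(\zeta_{s+T_n}-\zeta_{T_n})$ at first-passage times are again conditioned Lévy processes independent of the past. That is false: under $\P^\uparrow$ the process is Markov but does not have stationary independent increments, and the law after $T_n$ is $\P^\uparrow_{\zeta_{T_n}}$, which depends on the (random) level reached; there is no i.i.d.\ cycle structure, and tail triviality is not automatic for a transient Markov process. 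The paper's Lemma~\ref{lem:01} instead proves the $0$--$1$ law by showing that $\psi(x)=\P^\uparrow_x(\int_0^\infty f(\xi_s)\dd s<\infty)$ makes $(\psi(\xi_t))$ a closed martingale converging to the indicator of the event, that $\psi$ is monotone (via the Chaumont--Doney decomposition at the overall minimum, Lemma~\ref{lem:chd}, and stochastic monotonicity of $\underline{\xi}$ in the starting point), and then uses transience to identify the limit with $\lim_{x\to\infty}\psi(x)\in\{0,1\}$.

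Second, and more seriously, the hard direction --- that almost sure finiteness of $\int_0^\infty f(\xi_s)\dd s$ forces $\int_0^\infty yf(y)\dd y<\infty$ --- is only gestured at. Your proposed contradiction via occupation times of bands $[k,k+1]$ being ``roughly independent'' with means of order $kf(k)$, combined with Borel--Cantelli or a three-series estimate, does not go through: these occupation times are neither independent nor exchangeable under $\P^\uparrow$, and divergence of the mean of a sum of dependent nonnegative terms does not imply almost sure divergence without a concentration input you have not supplied (this is exactly why an infinite first moment is compatible, a priori, with an a.s.\ finite integral). The paper's mechanism is different and quantitative: by Proposition~2.7 of \cite{BDK20} (Lemma~\ref{lem:BDK}), if the perpetual integral is a.s.\ finite then for the set $M_{N,p}$ of starting points from which the integral is $<N$ with probability $>p$ one has $\int_0^\infty \E^\uparrow_x\big(f(\xi_s)\ind{\xi_s\in M_{N,p}}\big)\dd s\le 2N/p^2$; the paper then shows, using tightness of overshoots, monotonicity of $f$, and the renewal estimates \eqref{eqn:estimatesRenewal}, that $[C,\infty)\subset M_{N,p}$ for some finite $C$ (Lemma~\ref{lem:DoB}), so the truncated first moment is finite, and Lemma~\ref{lem:integral} applied to $f\ind{\cdot\ge C}$ yields $\int_0^\infty yf(y)\dd y<\infty$. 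You cite \cite{KoS19,BDK20}, but the role you assign them (``occupation times do not decay'') is not what those results provide, and without the $M_{N,p}$ argument your converse direction has no proof. The opening reduction to $\P^\uparrow$ via the post-minimum decomposition is also looser than needed (pathwise equivalence under the shift by $\underline{\xi}$ requires an argument), though that part is repairable; the two gaps above are the substantive ones.
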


We mention that the assumptions on $f$ could be relaxed by adding requirements on the law of $\xi$, such as requiring the law of $\xi_1$ to be absolutely continuous with respect to the Lebesgue measure. We also observe that 
\begin{equation}
\label{eqn:finite<C}
\int_0^\infty f(\xi_s)\ind{\xi_s \leq C} \dd s < \infty,\qquad \P^\uparrow_x\text{-a.s for all}~ C > 0,
\end{equation}
 by \cite[Proposition~1]{ChD05} and boundedness of $f$, and that $\int_0^C y f(y) \dd y < \infty$ by boundedness of $f$. As a result, without loss of generality, we can assume the function $f$ in Proposition~\ref{prop:finiteness} to be non-increasing.

If $\xi$ is a Brownian motion under law $\P$, then $\xi$ under $\P^{\uparrow}_x$ is a 3-dimensional Bessel process. In that case, a similar result is known \cite[Exercise~XI~2.5]{ReY}. For a random walk conditioned to stay positive, the corresponding result is given by \cite[Proposition~2.1]{Che15}. For unconditioned L\'evy processes, the perpetual integrals have been studied in \cite{DoK16,KoS19}. Recent development of Baguley, Döring and Kyprianou~\cite{BDK20} gives a new characterization for transient strong Markov processes.
However, to directly apply their criterion would require characterizing all so-called \emph{supportive sets} of a  Markov process $X$, i.e.\ all sets $A$ such that $\P( \{X_t, t\ge 0\}\subseteq A ) >0$. 
It is unclear how to do this for conditioned L\'evy processes. 
For this reason, Proposition~\ref{prop:finiteness} is not an immediate consequence of \cite{BDK20} but we will develop a proof based on an intermediate result developed there (Lemma~\ref{lem:BDK} below).

We decompose the proof of Proposition~\ref{prop:finiteness}  into three lemmas. In Lemma~\ref{lem:DoB}, we show that $\int f(\xi_s) \dd s$ is finite $\P^\uparrow_x$-a.s.\ if and only if its mean is finite, using Lemma~\ref{lem:BDK}. We then show in Lemma~\ref{lem:integral} the equivalence between the finiteness of the mean and the integral test of the proposition. Finally, we show that $\int_0^\infty f(\xi_s) \dd s < \infty$ has a 0-1 law, which yields the equivalence between the two statements in \eqref{eqn:finiteness}.

\begin{lemma}
	\label{lem:DoB}
	For any bounded non-increasing measurable function $f\ge 0$ and $x \geq 0$, we have
	\[
	\P_x^{\uparrow}\left( \int_0^\infty f(\xi_s) \dd s < \infty \right)=1  \iff \exists C>0,~ \int_0^\infty \E_x^{\uparrow}(f(\xi_s)\ind{\xi_s\ge C}) \dd s <\infty.
	\]
\end{lemma}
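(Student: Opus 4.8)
The forward implication is trivial: if the perpetual integral is finite $\P^\uparrow_x$-a.s., Fubini gives $\int_0^\infty \E^\uparrow_x(f(\xi_s)) \dd s$ need not be finite in general, so instead I would split $\int_0^\infty f(\xi_s)\dd s = \int_0^\infty f(\xi_s)\ind{\xi_s<C}\dd s + \int_0^\infty f(\xi_s)\ind{\xi_s\ge C}\dd s$; the first term is $\P^\uparrow_x$-a.s.\ finite by \eqref{eqn:finite<C}, so finiteness of the full integral forces finiteness of $\int_0^\infty f(\xi_s)\ind{\xi_s\ge C}\dd s$ a.s.; then, since $f$ is bounded and $\xi$ is transient under $\P^\uparrow_x$, I would argue via Lemma~\ref{lem:BDK} (or directly) that a.s.-finiteness of this tail integral is in fact equivalent to finiteness of its expectation, so one direction of the claimed equivalence already produces the integral test with a suitable $C$. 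Actually the cleanest route is to prove both directions at once through the criterion in Lemma~\ref{lem:BDK}, which (as invoked later) should state that for a transient Markov process a perpetual integral of a bounded function is a.s.\ finite iff its mean is finite, under a condition that the tail of the occupation measure is controlled; the role of the truncation $\ind{\xi_s\ge C}$ is precisely to remove the contribution near the absorbing/low region where $\xi$ spends only finite time anyway.

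Here is the structure I would follow. First, record that under $\P^\uparrow_x$ the process $\xi$ is transient (drifts to $+\infty$), by \cite[Proposition~1]{ChD05}, and that $f$ bounded plus transience gives \eqref{eqn:finite<C}: for any fixed $C$, $\int_0^\infty \ind{\xi_s\le C}\dd s<\infty$ a.s., hence $\int_0^\infty f(\xi_s)\ind{\xi_s\le C}\dd s \le \|f\|_\infty \int_0^\infty \ind{\xi_s\le C}\dd s<\infty$ a.s. Consequently
\[
\int_0^\infty f(\xi_s)\dd s <\infty \quad\P^\uparrow_x\text{-a.s.} \iff \int_0^\infty f(\xi_s)\ind{\xi_s\ge C}\dd s<\infty\quad\P^\uparrow_x\text{-a.s.}
\]
for every $C>0$. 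So it remains to show: a.s.-finiteness of $\int_0^\infty f(\xi_s)\ind{\xi_s\ge C}\dd s$ for one (equivalently every) $C$ is equivalent to finiteness of $\int_0^\infty\E^\uparrow_x(f(\xi_s)\ind{\xi_s\ge C})\dd s$ for some $C$. The $\Leftarrow$ direction is immediate from Tonelli. For $\Rightarrow$, I would apply Lemma~\ref{lem:BDK}: the additive functional $A_t:=\int_0^t g(\xi_s)\dd s$ with $g:=f\cdot\ind{[C,\infty)}$ is a perpetual integral of a bounded function along a transient path, and the dichotomy there yields that $A_\infty<\infty$ a.s.\ implies $\E^\uparrow_x(A_\infty)<\infty$ — possibly after enlarging $C$ so that $g$ is supported away from the region of irregular behaviour, which is exactly why the statement quantifies $C$ existentially on the right-hand side.

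The main obstacle is the $\Rightarrow$ direction, i.e.\ promoting almost-sure finiteness to finiteness in mean. For a general transient Markov process this is false without extra hypotheses (one can have heavy-tailed occupation times), so the argument must genuinely use the structure available here: the renewal bound \eqref{eqn:estimatesRenewal} $R(x)\asymp x$, the $h$-transform relation \eqref{eqn:lawConditioned}, and the path decomposition in Lemma~\ref{lem:chd} that writes $\xi$ under $\P^\uparrow_x$ as a pre-minimum piece followed by an independent copy of the process under $\P^\uparrow$ shifted by $\underline\xi$. The natural strategy is: condition on $\underline\xi=y$ (with law $\P^\uparrow_x(\underline\xi\in\dd y)$ given by Lemma~\ref{lem:chd}) and on the pre-minimum excursion, so that the tail $\int f(\xi_s)\ind{\xi_s\ge C}\dd s$ is governed by the stationary-type behaviour of the post-minimum process under $\P^\uparrow$; then invoke Lemma~\ref{lem:BDK} for that canonical process, where transience is quantitative and the occupation measure of $[C,\infty)$ has controllable tails, to get the mean bound. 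Taking $C$ large enough that $f$ is non-increasing past $C$ and that the post-minimum process has already escaped the boundary layer is what makes the existential $C$ on the right-hand side suffice. I would then combine this with the $C$-equivalence from the first step to conclude.
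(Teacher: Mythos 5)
Your reverse direction and the reduction via \eqref{eqn:finite<C} are fine and coincide with the paper's. The gap is in the direct direction, and it is precisely where you lean on Lemma~\ref{lem:BDK}. That lemma is \emph{not} an ``a.s.\ finite iff finite in mean'' dichotomy for transient processes: it only says that if the starting point $x$ lies in the set $M_{N,p}$ of points $y$ from which $\P^\uparrow_y\bigl(\int_0^\infty f(\xi_s)\dd s<N\bigr)>p$, then $\int_0^\infty \E^\uparrow_x\bigl(f(\xi_s)\ind{\xi_s\in M_{N,p}}\bigr)\dd s\le 2N/p^2$. This bounds the expected occupation integral \emph{restricted to} $M_{N,p}$, which a priori could be a thin set. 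To obtain the statement of the lemma, with the indicator $\ind{\xi_s\ge C}$, one must prove that $M_{N,p}$ contains a half-line $[C,\infty)$, i.e.\ a lower bound on $\P^\uparrow_y\bigl(\int_0^\infty f(\xi_s)\dd s< N\bigr)$ that is uniform over all large starting points $y$, knowing only that it holds at the single point $x$. That is the actual content of the paper's proof, and your proposal does not supply it.

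The paper gets it in two steps: first, for $0\le z\le\delta y$ it compares $\P^\uparrow_{y+z}(T_N<\infty)$ with $\P^\uparrow_y(T_N<\infty)$ by conditioning on $\{\underline{\xi}\ge z\}$ (Lemma~\ref{lem:chd}), using the monotonicity of $f$ to absorb the shift and \eqref{eqn:estimatesRenewal} to make $1-R(y)/R(y+z)$ small; second, it uses tightness of overshoots (finite variance) and the strong Markov property at the first entrance time into the window $[y(1-\delta),y]$ to transfer the bound from $x$ to every large $y$, yielding $[C(\epsilon),\infty)\subset M_{N,p}$, after which Lemma~\ref{lem:BDK} applies. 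Your alternative sketch --- condition on the overall minimum and ``invoke Lemma~\ref{lem:BDK} for the post-minimum process, where the occupation measure of $[C,\infty)$ has controllable tails'' --- is circular: the controllability in mean of that occupation integral is exactly the assertion to be proved, and conditioning on $\underline{\xi}$ by itself does not produce the needed uniform-in-$y$ membership in $M_{N,p}$. So the proposal, as written, does not close the hard implication; it needs the comparison-plus-overshoot argument (or some substitute for it) to show $M_{N,p}\supseteq[C,\infty)$.
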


The proof of Lemma~\ref{lem:DoB} is based on the following observation that we take from \cite{BDK20}. For unconditioned L\'evy processes, an analogous result is given in \cite[Lemma 4.5]{KoS19}.

\begin{lemma}[{\cite[Proposition 2.7]{BDK20}}]\label{lem:BDK}
	Let $\zeta$ be a transient strong Markov process on $\R_+$ and denote by $\mathbf{P}_x$ the law of $\zeta$ starting from $\zeta_0 =x$. For any non-negative measurable function $f$ and $N, p>0$, let 
	\[
	M_{N,p} := \left\{y\in \R_+\colon \mathbf{P}_y \left(\int_0^{\infty} f (\zeta_s) \dd s <N\right) > p \right\}. 
	\]
	Suppose that $x\in M_{N,p}$, then 
	\[
	\int_0^\infty \mathbf{E}_x(f(\zeta_s)\ind{\zeta_s\in M_{N,p}}) \dd s <2N/p^2. 
	\]
\end{lemma}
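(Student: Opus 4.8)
The plan is to prove the equivalent estimate $\mathbf{E}_x\!\big[\int_0^\infty f(\zeta_s)\ind{\zeta_s\in M_{N,p}}\,\dd s\big]<2N/p^2$ for $x\in M_{N,p}$ --- by Tonelli this expectation is precisely the integral appearing in the conclusion --- and to reach it through a geometric tail bound on the associated additive functional. Abbreviate $M:=M_{N,p}$ and set
\[
A_t:=\int_0^t f(\zeta_s)\,\ind{\zeta_s\in M}\,\dd s,\qquad A_\infty:=\lim_{t\to\infty}A_t,
\]
so that $A$ is continuous and nondecreasing, it increases only on $\{s:\zeta_s\in M\}$, and $A_\infty\le I:=\int_0^\infty f(\zeta_s)\,\dd s$; in particular $\mathbf{P}_y(A_\infty\ge N)\le\mathbf{P}_y(I\ge N)\le 1-p$ for every $y\in M$. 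The target is the bound
\[
\mathbf{P}_y(A_\infty\ge kN)\le(1-p)^k\qquad\text{for all }k\ge 0\text{ and all }y\in M,
\]
since it implies $\mathbf{E}_x[A_\infty]=\int_0^\infty\mathbf{P}_x(A_\infty>t)\,\dd t\le N\sum_{k\ge 0}\mathbf{P}_x(A_\infty\ge kN)\le N\sum_{k\ge 0}(1-p)^k=N/p<2N/p^2$.

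I would prove the geometric bound by induction on $k$, with the cases $k=0$ (trivial) and $k=1$ (the inequality $\mathbf{P}_y(A_\infty\ge N)\le 1-p$ noted above) as base cases. Assume it at level $k\ge 1$, and let $\alpha:=\inf\{t:A_t\ge N\}$; by continuity of $A$ we have $A_\alpha=N$ on $\{\alpha<\infty\}$, and on $\{A_\infty\ge(k+1)N\}$ one has $\alpha<\infty$ and the post-$\alpha$ increment $A_\infty-A_\alpha=\int_\alpha^\infty f(\zeta_u)\,\ind{\zeta_u\in M}\,\dd u$ is at least $kN$. Since, conditionally on $\mathcal{F}_\alpha$, this increment has the law of $A_\infty$ started from $\zeta_\alpha$, the strong Markov property gives
\begin{align*}
\mathbf{P}_y\big(A_\infty\ge(k+1)N\big)
&\le\mathbf{E}_y\!\big[\ind{\alpha<\infty}\,\mathbf{P}_{\zeta_\alpha}(A_\infty\ge kN)\big]\\
&\le(1-p)^k\,\mathbf{P}_y(\alpha<\infty)\le(1-p)^{k+1},
\end{align*}
using the inductive hypothesis at the restart point $\zeta_\alpha$ together with $\mathbf{P}_y(\alpha<\infty)\le\mathbf{P}_y(A_\infty\ge N)\le 1-p$.

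The delicate point --- which I expect to be the main obstacle --- is that the inductive step requires $\zeta_\alpha$ to belong to $M$ (or at least to a set on which $\mathbf{P}_\cdot(A_\infty\ge N)\le 1-p$ persists), whereas $A$ grows only while $\zeta\in M$ and a priori $\zeta$ could leave $M$ exactly at the time $\alpha$. The robust way around this, following \cite{BDK20}, is to replace $\zeta$ by its trace on $M$: the process time-changed by the right-continuous inverse of the continuous additive functional $L_t=\int_0^t\ind{\zeta_s\in M}\,\dd s$. This trace process is again strong Markov, takes values in $M$, has perpetual integral of $f$ again equal to $A_\infty$, and from any $y\in M$ still satisfies $\mathbf{P}_y(A_\infty<N)>p$; the computation above then applies to it with the restart point automatically in $M$. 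For the Lévy process conditioned to stay positive one may instead argue directly, since $\alpha$ is a predictable stopping time, $\zeta$ is quasi-left-continuous (whence $\zeta_\alpha=\zeta_{\alpha-}\in\overline M$), and $y\mapsto\mathbf{P}_y(I<N)$ is regular enough --- the resolvent being absolutely continuous --- that the defining inequality of $M_{N,p}$ extends to $\overline M$. It is exactly this device that makes Proposition~2.7 of \cite{BDK20} valid for an arbitrary transient strong Markov process.
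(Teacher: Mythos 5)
Your reduction of the lemma to the geometric tail bound $\mathbf{P}_y(A_\infty\ge kN)\le(1-p)^k$, $y\in M_{N,p}$, is fine, but the proof of that bound has a genuine gap, and it is precisely the one you flag: the inductive step needs the estimate at the restart point $\zeta_\alpha$, while your hypothesis only provides it for starting points in $M_{N,p}$, and nothing forces $\zeta_\alpha\in M_{N,p}$ (the process can jump out of $M_{N,p}$ exactly at time $\alpha$, and $M_{N,p}$ need not be closed). Neither of your proposed remedies closes this. The trace of $\zeta$ on $M_{N,p}$, i.e.\ the time change by the inverse of $L_t=\int_0^t\ind{\zeta_s\in M_{N,p}}\dd s$, does \emph{not} take values in $M_{N,p}$: it lives on the fine support of the additive functional $L$, and the time-change formula only gives $\tilde{\zeta}_t\in M_{N,p}$ for Lebesgue-almost every $t$; at the trace's own crossing time the restart point may again lie outside $M_{N,p}$, so the problem recurs rather than disappears. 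The quasi-left-continuity argument gives at best $\zeta_\alpha=\zeta_{\alpha-}\in\overline{M_{N,p}}$, and transferring the defining inequality $\mathbf{P}_y\big(\int_0^\infty f(\zeta_s)\dd s<N\big)>p$ to closure points requires a regularity of $y\mapsto\mathbf{P}_y\big(\int_0^\infty f(\zeta_s)\dd s<N\big)$ that you assert but do not establish (absolute continuity of the resolvent does not yield it); moreover the lemma is stated for an arbitrary transient strong Markov process, so an argument tailored to the Lévy process conditioned to stay positive could not prove it in the stated generality anyway.

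Note also that the paper offers no proof to compare with: the lemma is imported verbatim from \cite[Proposition~2.7]{BDK20}. For what it is worth, the statement admits a short argument that avoids the induction and the restart-point issue entirely, using only the simple Markov property at fixed times: on $\{\zeta_s\in M_{N,p}\}$ one has $\ind{\zeta_s\in M_{N,p}}\le p^{-1}\ind{\zeta_s\in M_{N,p}}\,\mathbf{P}_{\zeta_s}\big(\int_0^\infty f(\zeta_u)\dd u<N\big)$, and by the Markov property at time $s$,
\begin{equation*}
\mathbf{E}_x\Big[f(\zeta_s)\ind{\zeta_s\in M_{N,p}}\,\mathbf{P}_{\zeta_s}\Big(\textstyle\int_0^\infty f(\zeta_u)\dd u<N\Big)\Big]
=\mathbf{E}_x\Big[f(\zeta_s)\ind{\zeta_s\in M_{N,p}}\ind{\int_s^\infty f(\zeta_u)\dd u<N}\Big];
\end{equation*}
integrating over $s$ and using that $s\mapsto\int_s^\infty f(\zeta_u)\dd u$ is non-increasing, so that pathwise $\int_0^\infty f(\zeta_s)\ind{\int_s^\infty f(\zeta_u)\dd u<N}\dd s\le N$, gives $\int_0^\infty\mathbf{E}_x\big(f(\zeta_s)\ind{\zeta_s\in M_{N,p}}\big)\dd s\le N/p\le 2N/p^2$. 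If you want to keep your scheme instead, you must genuinely carry out the extension of the defining inequality of $M_{N,p}$ to the set of possible restart points (as \cite{BDK20} do), not merely invoke it.
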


\begin{proof}[Proof of Lemma~\ref{lem:DoB}]
	We first observe that if there exists $C > 0$ such that
	\begin{equation*}\label{eqn:finite>C}
	\int_0^\infty \E_x^{\uparrow}(f(\xi_s)\ind{\xi_s\ge C}) \dd s = \E_x^\uparrow \left( \int_0^\infty f(\xi_s) \ind{\xi_s \geq C} \dd s \right) <\infty,
	\end{equation*}
	then $\int_0^\infty f(\xi_s) \ind{\xi_s \geq C} \dd s< \infty$ $\P^\uparrow_x$-a.s.. Combining this and \eqref{eqn:finite<C} completes the proof of the reverse part of the lemma.
	
	We now turn to the direct part and assume that there exists $x \geq 0$ such that $\int_0^\infty f(\xi_s) \dd s < \infty$ $\P^\uparrow_x$-a.s.. 
	Then, given $p \in (0,1)$, we can choose $N$ large enough such that
	\[
	\P^{\uparrow}_x \left(\int_0^{\infty}\! f(\xi_s) \dd s \ge N\right)  < 1-p, 
	\]
	i.e., with $M_{N,p}$ defined as in Lemma~\ref{lem:BDK}, we have $x\in M_{N,p}$. 
	it remains to prove that there exists $C<\infty$ such that  $[C,\infty) \subset M_{N,p}$. Then the desired statement follows from Lemma~\ref{lem:BDK}.
	
	Set
	\[T_N= \inf \left\{t\colon \int_0^t f(\xi_s) \dd s \ge N \right\}.\]
	We first show that $\P^\uparrow_y(T_N < \infty)$ is comparable to $\inf_{0 \leq z \leq \delta y} \P^\uparrow_{y-z}(T_N < \infty)$ for large values of $y$. 
	Setting $\underline{\xi} = \inf_{s \geq 0} \xi_s$, we observe that, for all $y,z \geq 0$,
	\begin{align*}
	\P^\uparrow_{y+z}(T_N < \infty) &\leq \P^\uparrow_{y+z}(T_N < \infty, \underline{\xi} \geq z) + \P^\uparrow_{y+z}(\underline{\xi} < z)
	\leq \P^\uparrow_{y+z}\left(T_N < \infty \,\middle|\,\underline{\xi} \geq z\right) + \P^\uparrow_{y+z}(\underline{\xi} < z).
	\end{align*}
	Moreover, the law of $\xi$ under $\P^\uparrow_{y+z}(\,\cdot\,\mid\underline{\xi} \geq z)$ is the same as the law of $\xi + z$ under $\P^\uparrow_{y}(\,\cdot\,\mid\underline{\xi} \geq 0) = \P^\uparrow_{y}$. It follows that 
	\[
	\P^\uparrow_{y+z}\left(T_N < \infty \,\middle|\,\underline{\xi} \geq z\right) = \P^\uparrow_y\left( \int_0^\infty f(\xi_s + z) \dd s < N  \right) \leq \P^\uparrow_y(T_N < \infty),
	\]
	as $f$ is non-increasing. Therefore, we have 
	\[
	\P^\uparrow_y(T_N < \infty) \geq \P^\uparrow_{y+z}(T_N < \infty) - \P^\uparrow_{y+z}(\underline{\xi} < z),
	\]
	where $\P^\uparrow_{y+z}(\underline{\xi} < z) =  1 - \frac{R(y)}{R(y+z)}$ by Lemma~\ref{lem:chd}.
	
	Let $\epsilon \in (0,1)$ and $\delta < \epsilon/2$. By \eqref{eqn:estimatesRenewal}, there exists $C_1(\epsilon)>0$ such that for all $y>C_1(\epsilon)$ and $z \in [0,\delta y]$ we have $\frac{R(y)}{R(y+z)} = \P^\uparrow_{y+z}(\underline{\xi} \geq z) > 1-\epsilon$. As a result, for all $y \geq C_1(\epsilon)$, we have
	\begin{equation}\label{eqn:delta-y}
	\inf_{0 \leq z \leq \delta y} \P^\uparrow_{y-z}(T_N < \infty) \geq \P^\uparrow_y(T_N < \infty) - \epsilon. 
	\end{equation}
	
	We now set $\hat{T}_{y,\delta} = \inf\{ s \geq 0 : \xi_s \in [y(1-\delta),y] \}$. As $\xi$ is a Lévy process with finite variance under law $\P$, it is well-known that the overshoot distribution of $\xi$ is tight under law $\P_x^\uparrow$. Indeed, by \cite[Lemma~3]{BeS11} this holds under $\P$. Therefore, it also holds under $\P_x^\uparrow$ due to the duality property \cite[Corollary~2]{BeS11}. Therefore, we have
	\[
	\P^\uparrow_x(\hat{T}_{y,\delta} < \infty) \geq \P^\uparrow_x\left( \xi_{\inf\{t > 0 : \xi_t >y(1-\delta)\}} - y (1-\delta) \leq \delta y \right)  \underset{y\to \infty}{\longrightarrow} 1. 
	\]
	Then we set $C_2(\epsilon)$ such that $\P^\uparrow_x(\hat{T}_{y,\delta} = \infty) < \epsilon$ for all $y \geq C_2(\epsilon)$.
	
	Recall that $x\in M_{N,p}$. For all $y \geq C(\epsilon):= \max(C_1(\epsilon),C_2(\epsilon))$, using the strong Markov property and \eqref{eqn:delta-y}, we deduce that 
	\begin{align*}
	\P^{\uparrow}_x \left(\int_0^{\infty}\! f(\xi_s) \dd s \ge N\right)
	&\ge \E^{\uparrow}_x \left(\ind{\hat{T}_{y,\delta}<\infty}\P^{\uparrow}_{\xi_{\hat{T}_{y,\delta}} } \left(\int_0^{\infty}\! f(\xi_s) \dd s \ge N\right)  \right) \\
	&\ge \E^{\uparrow}_x \left( \ind{\hat{T}_{y,\delta}<\infty}\P^{\uparrow}_{y} \left(\int_0^{\infty}\! f(\xi_s) \dd s \ge N\right) \right) -\epsilon \\
	&\ge \P^{\uparrow}_{y} \left(\int_0^{\infty}\! f(\xi_s) \dd s \ge N\right)  (1-\epsilon) -\epsilon. 
	\end{align*}
	By choosing $\epsilon \in (0,1)$ small enough, we have 
	\[
	\P^{\uparrow}_y \left(\int_0^{\infty}\! f(\xi_s) \dd s \ge N\right) < \frac{\P^{\uparrow}_x \left(\int_0^{\infty}\! f(\xi_s) \dd s \ge N\right) + \epsilon}{1-\epsilon} < 1-p.
	\]
	So we have $[C(\epsilon),\infty) \subset M_{N,p}$, completing the proof. 
\end{proof}

\begin{lemma}
	\label{lem:integral}
	Under the same assumptions as in Proposition~\ref{prop:finiteness}, for any $x \geq 0$, 
	there exists $0<c_1 < c_2 < \infty$ such that
	\[
	c_1 \int_x^\infty y f(y) \dd y \leq \int_0^\infty \E^\uparrow_x(f(\xi_s)) \dd s \leq c_2 \int_0^\infty y f(y) \dd y.
	\] 
\end{lemma}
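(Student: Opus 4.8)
The key object is the potential measure (Green's function) of the Lévy process conditioned to stay positive. Write $U^\uparrow_x(\dd y) = \int_0^\infty \P^\uparrow_x(\xi_s \in \dd y)\,\dd s$, so that $\int_0^\infty \E^\uparrow_x(f(\xi_s))\,\dd s = \int_0^\infty f(y)\,U^\uparrow_x(\dd y)$. The whole lemma is then equivalent to two-sided bounds $c_1\, y\,\dd y \lesssim U^\uparrow_x(\dd y)\lesssim c_2\, y\,\dd y$ on $[x,\infty)$ (in a suitably integrated/averaged sense, which is all we need against a monotone $f$). The plan is to compute $U^\uparrow_x$ via the $h$-transform relation $U^\uparrow_x(\dd y) = \frac{R(y)}{R(x)} U^\circ_x(\dd y)$, where $U^\circ_x(\dd y) = \int_0^\infty \P_x(\xi_s\in\dd y,\ \tau>s)\,\dd s$ is the potential of $\xi$ killed on exiting $(0,\infty)$. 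By \eqref{eqn:estimatesRenewal} the factor $R(y)/R(x)$ is comparable to $(y+1)$ up to constants depending on $x$, so the problem reduces to showing that $U^\circ_x(\dd y)$ is comparable to Lebesgue measure on $[x,\infty)$, i.e.\ that the killed process has a Green's function bounded above and below by positive constants on, say, $[2x,\infty)$.

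For the killed potential I would invoke the classical identity relating it to the renewal functions of the ascending and descending ladder height processes: for a centred Lévy process, by Silverstein's / Pecherskii--Rogozin--type factorization (see e.g.\ \cite{Ber96} or Doney's Saint-Flour notes), one has an explicit formula of the form $U^\circ_x(\dd y) = \int_{[0,\infty)} \hat V(\dd z)\, V(x\wedge y - \dd\,\cdots)$-type convolution, and in particular for $y\ge x$ the density is bounded between constant multiples of $V(x)$ and $V(x)$ times the derivative of $\hat V$, which for a centred finite-variance process is bounded above and below because the descending ladder height renewal measure has a density comparable to a constant on compacts and, crucially, $\hat V(\dd z)/\dd z$ stays bounded at infinity since the descending ladder height process has finite mean (centredness). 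This gives $c_1(x)\le U^\circ_x([y,y+1])\le c_2(x)$ uniformly for $y\ge 2x$, which is the content we need. A cleaner route, and the one I would actually write, is to bound $\E^\uparrow_x\big(\int_0^\infty f(\xi_s)\ind{\xi_s\in[n,n+1)}\,\dd s\big)$ directly: using the path decomposition at the overall minimum (Lemma~\ref{lem:chd}) together with the fact that under $\P^\uparrow$ the process is transient and drifts to $+\infty$ linearly in $L^1$ — more precisely $\E^\uparrow[\text{time spent in }[n,n+1)]$ is $\asymp n$ because the conditioned process behaves at large scales like the unconditioned centred one reweighted by $R$, whose growth is linear — one gets the time spent near level $n$ is of order $n$, hence $\int_0^\infty \E^\uparrow_x(f(\xi_s))\,\dd s \asymp \sum_n n\, f(n) \asymp \int yf(y)\,\dd y$, using monotonicity of $f$ to pass between the sum and the integral, and \eqref{eqn:finite<C} / the remark after Proposition~\ref{prop:finiteness} to discard the contribution of $[0,2x)$ (absorbing it into the constants, which may depend on $x$).

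The main obstacle is making the lower bound on the time spent near level $n$ rigorous and uniform in $n$: the upper bound $U^\circ_x([y,y+1])\le c_2$ follows from standard upper bounds on killed potentials, but the lower bound requires that the conditioned process does not traverse large intervals "too fast", i.e.\ that $\E^\uparrow_x[\text{occupation of }[n,n+1)]$ does not decay in $n$. This is where finite variance (equivalently, finite mean of the ascending ladder height process, so that $R(x)/x\to c_\star\in(0,\infty)$ by \eqref{eqn:estimatesRenewal}) is essential: it forces the $h$-function $R$ to grow exactly linearly, so the reweighting $R(y)/R(x)\asymp y$ exactly compensates a potential $U^\circ_x$ that is $\asymp$ Lebesgue, giving occupation time $\asymp y$ near level $y$ rather than something smaller. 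I would isolate this as a short sub-step: first establish $c_1\le U^\circ_x([y,y+1])\le c_2$ for $y\ge 2x$ via ladder-height renewal estimates (citing \cite{Ber96,DoM02}), then assemble the comparison for $U^\uparrow_x$ and finally compare the resulting series $\sum_n n f(n)$ with $\int_0^\infty yf(y)\,\dd y$ using that $f$ is non-increasing, which we may assume by the remark following Proposition~\ref{prop:finiteness}.
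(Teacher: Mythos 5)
Your main line of attack is the same as the paper's: after the $h$-transform identity $\int_0^\infty \E^\uparrow_x(f(\xi_s))\,\dd s = \frac{1}{R(x)}\int_0^\infty \E_x\big(R(\xi_t)f(\xi_t)\ind{t<\tau}\big)\dd t$, you express the killed potential through the ladder-height renewal measures (Silverstein's identity, \cite[Theorem~VI.20]{Ber96}), and then use the centred, finite-variance hypothesis so that the renewal measures behave like multiples of Lebesgue measure at infinity (\cite[Theorem~7]{DoM02}, \cite[Theorem~I.21]{Ber96}), absorbing the region near $0$ by boundedness of $f$. That is exactly the published argument for the generic case, and your "occupation time near level $n$ is $\asymp n$" heuristic is just a restatement of it, which you correctly propose to substantiate by the same renewal estimates.

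The genuine gap is the compound Poisson case, which your plan does not cover and where two of your assertions fail. First, \cite[Theorem~VI.20]{Ber96} is stated for Lévy processes that are not compound Poisson (because of ties at the infimum and the resulting ambiguity in the ladder structure), so the factorization you invoke is not directly available there. Second, your claim that the descending ladder height renewal measure "has a density comparable to a constant" is false when the process is lattice-valued (e.g.\ a compound Poisson process with integer jumps and no drift): the renewal measure, and hence the occupation measure of the conditioned process, is then purely atomic, supported on $x+r\Z$, and the pointwise comparison $c_1\le U^\circ_x([y,y+1])\le c_2$ must be replaced by a statement about masses of lattice points; the comparison with $\int_0^\infty yf(y)\,\dd y$ then genuinely uses the eventual monotonicity of $f$ to pass between $\sum_{k\ge 0} k f(kr+x)$ and the integral. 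The paper handles this by a separate argument: for compound Poisson processes it reduces to the random-walk statement (via \cite[Equation~(2.9)]{Che15} and the renewal estimates in \cite[Chapter~XII]{Fel}) in the non-lattice case, and in the lattice case proves $\int_0^\infty \E^\uparrow_x(f(\xi_t))\,\dd t<\infty \iff \sum_k kf(kr+x)<\infty$ and then compares the series with the integral using monotonicity. Your proof needs this supplementary case (or an argument showing it can be avoided) to establish the lemma in the stated generality.
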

Note that, by applying Lemma~\ref{lem:integral} with $\bar{f}(x)= f(x)\ind{x\ge C}$ and using the boundedness of $f$, we 
deduce that, under the same assumptions,  
\[
\exists C>0,~ \int_0^\infty \E_x^{\uparrow}\left(f(\xi_s)\ind{\xi_s\ge C}\right) \dd s <\infty\iff
\int_0^\infty yf(y) dy < \infty.
\]
\begin{proof}
	We first assume that $\xi$ is not a compound Poisson process. It then follows from the change of measure \eqref{eqn:lawConditioned} and \cite[Theorem~VI.20]{Ber96} that 
	\begin{align*}
	\int_0^\infty \E^\uparrow_x \left( f(\xi_t) \right) \dd t
	&= \frac{1}{R(x)}\int_0^\infty \E_x \left( R(\xi_t) f(\xi_t) \ind{t< \tau} \right) \dd t\\
	&= C' \int_{[0,\infty)}  \dd \bar{R}(r) \int_{[0,x]} \dd R(z) R(x+r-z)f(x+r-z), 
	\end{align*}
	where $C'>0$ is a certain constant, $\tau=\inf\{s \geq 0\colon \xi_s <0\}$ and $\bar{R}$ stands for the renewal function of $-\xi$. Additionally, since the unconditioned L\'evy process $\xi$ is centred with finite variance, by \cite[Theorem~7]{DoM02} and \cite[Theorem~I.21]{Ber96}, the measures $\dd R(\cdot + z)$ and $\dd \bar{R}(\cdot + z)$ converge vaguely toward multiples of the Lebesgue measures, as $z \to \infty$. Using \eqref{eqn:estimatesRenewal} as well, we deduce that there exist two constants $0 < c < C< \infty$ such that
	\begin{multline*}
	c \int_{[0,x]} \dd z \int_{[x-z,\infty)} y f(y)  \dd y  
	\le \int_{[0,\infty)}  \dd \bar{R}(r) \int_{[0,x]} \dd R(z) R(x+r-z)f(x+r-z) \\
	\le C \int_{[0,x]} \dd z \int_{[x-z,\infty)} yf(y)  \dd y  . 
	\end{multline*}
	As the function $f$ is bounded, this leads to the desired statement. 
	
	If $\xi$ is a compound Poisson process, then the corresponding result on random walks leads to the conclusion. We recall that the span of the Lévy process $\xi$ is defined as $r:=\sup\{s > 0 : \P(\xi_1 \not \in s \Z) = 0 \}\ge 0$, with the convention that $\sup \emptyset = 0$. Assuming that $\xi$ is non-lattice (i.e.\@ the span is $r=0$), it is a consequence of \cite[Equation (2.9)]{Che15} and estimates on the renewal functions of random walks that can be found in \cite[Chapter~XII]{Fel}. If $\xi$ is lattice with span $r>0$, a similar argument leads to 
	\[
	\int_0^\infty \E^\uparrow_x \left( f(\xi_t) \right) \dd t<\infty \iff \sum_{k = 0}^\infty kf(kr+x)<\infty.
	\]
	As $f$ is eventually non-increasing, there exists $0<c < C < \infty$ such that
	\[
	c \int_x^\infty y f(y) \dd y \leq \sum_{k=0}^\infty k f(kr+x) \leq C \int_0^\infty y f(y) \dd y,
	\]
	completing the proof.
\end{proof}

We now prove that the finiteness of a perpetual integral of a Lévy process conditioned to stay positive satisfies a zero-one law.
\begin{lemma}
	\label{lem:01}
	Under the assumptions of Proposition~\ref{prop:finiteness}, for all $x \geq 0$ we have
	\[\P^\uparrow_x\left( \int_0^\infty f(\xi_s) \dd s < \infty \right) \in \{ 0,1\}.\]
\end{lemma}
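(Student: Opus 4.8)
The plan is to establish the zero-one law by combining a Markov-property argument at a large first-passage time with the comparison estimate \eqref{eqn:delta-y} already developed in the proof of Lemma~\ref{lem:DoB}. Fix $x\geq 0$ and write $p(y) := \P^\uparrow_y\big(\int_0^\infty f(\xi_s)\dd s < \infty\big)$. The key observation is that the event $\{\int_0^\infty f(\xi_s)\dd s < \infty\}$ is a tail event in the following sense: since $\int_0^t f(\xi_s)\dd s < \infty$ for every finite $t$ (by boundedness of $f$), for any stopping time $T < \infty$ the finiteness of the perpetual integral depends only on the post-$T$ trajectory, so by the strong Markov property $p(x) = \E^\uparrow_x\big( p(\xi_T) \big)$ whenever $T<\infty$ $\P^\uparrow_x$-a.s. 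I would first argue that $p$ is constant on some half-line $[C_0,\infty)$: using transience of $\xi$ under $\P^\uparrow_x$ together with tightness of the overshoot distribution (exactly as in the proof of Lemma~\ref{lem:DoB}, via $\hat T_{y,\delta}$), and the sandwich \eqref{eqn:delta-y} relating $\P^\uparrow_y(T_N<\infty)$ to $\inf_{0\le z\le\delta y}\P^\uparrow_{y-z}(T_N<\infty)$, one deduces that $p(y)$ has the same limit as $y\to\infty$ along all scales, and that $\limsup$ and $\liminf$ of $p$ at infinity coincide; call this common limit $\ell := \lim_{y\to\infty} p(y) \in [0,1]$.

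Next I would run a bootstrap. Starting from $x$, iterate the strong Markov property at the successive hitting times $\hat T_{y_n,\delta}$ of intervals $[y_n(1-\delta),y_n]$ with $y_n\to\infty$ chosen so that $\hat T_{y_n,\delta}<\infty$ with probability close to $1$ (possible by tightness of the overshoot and transience). Writing $p(x) = \E^\uparrow_x(p(\xi_{\hat T_{y_n,\delta}}))$ up to an error that vanishes as $y_n\to\infty$, and using that $p(\xi_{\hat T_{y_n,\delta}})\to\ell$ on the event $\{\hat T_{y_n,\delta}<\infty\}$, gives $p(x) = \ell$. Since $x$ was arbitrary, $p\equiv\ell$ on $[0,\infty)$. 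Finally, plugging $p\equiv\ell$ back into the identity $p(x) = \E^\uparrow_x(p(\xi_T))=\ell$ is consistent for any $\ell$, so to pin down $\ell\in\{0,1\}$ I would use a second-moment-free argument: combine Lemma~\ref{lem:DoB} and Lemma~\ref{lem:integral}. If $\ell>0$ then $\P^\uparrow_x(\int_0^\infty f(\xi_s)\dd s<\infty)>0$ for some (hence, by the above, every) $x$, so by Lemma~\ref{lem:DoB} there is $C>0$ with $\int_0^\infty\E^\uparrow_x(f(\xi_s)\ind{\xi_s\ge C})\dd s<\infty$, whence by the remark following Lemma~\ref{lem:integral} $\int_0^\infty yf(y)\dd y<\infty$, which in turn forces $\int_0^\infty f(\xi_s)\dd s<\infty$ $\P^\uparrow_x$-a.s., i.e.\ $\ell=1$. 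Thus $\ell\in\{0,1\}$, as claimed.

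Alternatively — and perhaps more cleanly — one can sidestep the limit-identification step: it suffices to show $p$ is constant, because then $p(x)\in\{0,1\}$ follows directly by feeding the constant value into Lemma~\ref{lem:DoB} as in the previous paragraph (a constant value in $(0,1)$ contradicts the dichotomy there). So the real content is just: (i) $\int_0^t f(\xi_s)\dd s<\infty$ for finite $t$, giving the strong Markov reduction $p(x)=\E^\uparrow_x(p(\xi_T))$; (ii) $p$ has a limit $\ell$ at $+\infty$ via \eqref{eqn:delta-y} and overshoot tightness; (iii) the bootstrap forcing $p\equiv\ell$; (iv) Lemma~\ref{lem:DoB} plus Lemma~\ref{lem:integral} excluding $\ell\in(0,1)$.

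I expect the main obstacle to be step (ii)–(iii): making rigorous that $p(y)$ actually \emph{converges} as $y\to\infty$ and that its value propagates back to every starting point $x$, rather than merely oscillating. The estimate \eqref{eqn:delta-y} controls $\P^\uparrow_y(T_N<\infty)$ in terms of nearby starting points but only up to an additive $\epsilon$ and only for a fixed threshold $N$; to turn this into genuine convergence of $p$ one must either let $N\to\infty$ carefully (monotone limit $\P^\uparrow_y(T_N<\infty)\downarrow 1-p(y)$) and interchange limits, or argue directly with the hitting-time decomposition. A clean way around this is to note that $p(y)$ is, up to the $\epsilon$-errors, monotone along the scales dictated by the overshoot, so $\limsup$ and $\liminf$ at infinity are squeezed together; combined with transience this yields both the limit and its back-propagation in one stroke. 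The remaining ingredients — boundedness of $f$ giving finiteness of truncated integrals, transience under $\P^\uparrow_x$ from \cite[Proposition~1]{ChD05}, and overshoot tightness from \cite{BeS11} — are all already in hand from the surrounding text.
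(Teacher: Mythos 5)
Your route to the constancy of $p(y):=\P^\uparrow_y\big(\int_0^\infty f(\xi_s)\dd s<\infty\big)$ --- strong Markov at the entrance time of $[y(1-\delta),y]$, the window estimate \eqref{eqn:delta-y} (passed to the limit $N\to\infty$, which is legitimate because $C_1(\epsilon)$ there does not depend on $N$), and overshoot tightness --- can be made rigorous, and it is a genuinely different, if heavier, path than the paper's: the paper obtains exact monotonicity of $x\mapsto p(x)$ in one line from the Chaumont--Doney decomposition at the overall minimum (Lemma~\ref{lem:chd}), since the minimum $v_x$ is stochastically monotone in $x$ and $f$ is non-increasing, so the existence of $\lim_{y\to\infty}p(y)$ comes with no $\epsilon$-bookkeeping at all.

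The genuine gap is your concluding step. Lemma~\ref{lem:DoB} as stated is an equivalence between $\P^\uparrow_x\big(\int_0^\infty f(\xi_s)\dd s<\infty\big)=1$ and the finiteness of $\int_0^\infty\E_x^{\uparrow}\big(f(\xi_s)\ind{\xi_s\ge C}\big)\dd s$ for some $C$; its forward implication cannot be invoked when you only know the probability is positive, and it contains no ``dichotomy'' excluding a constant value $\ell\in(0,1)$ --- ruling out such values is precisely the zero-one law you are trying to prove, so as written the last step is circular. (The proof of Lemma~\ref{lem:DoB} does go through assuming only $x\in M_{N,p}$ for some $N,p$, i.e.\ positive probability, but then you must rerun that proof under the weaker hypothesis rather than cite the lemma.) The clean repair, which is exactly what the paper does, is to observe that $\big(p(\xi_t)\big)_{t\ge 0}$ is the closed martingale $\E^{\uparrow}_x\big(\ind{\int_0^\infty f(\xi_s)\dd s<\infty}\,\big|\,\mathcal{F}_t\big)$, hence converges $\P^\uparrow_x$-a.s.\ to the indicator, while transience ($\xi_t\to\infty$) together with the existence of $\lim_{y\to\infty}p(y)=\ell$ forces $p(\xi_t)\to\ell$; therefore $\ell=\ind{\int_0^\infty f(\xi_s)\dd s<\infty}\in\{0,1\}$, and you do not even need the back-propagation $p\equiv\ell$ for the conclusion.
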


\begin{proof}
	Since $\xi$ is transient under $\P^{\uparrow}_x$ and $f$ is bounded,   
	we may assume without loss of generality that $f$ is non-increasing on the entire half-line $[0,\infty)$.

	We introduce the function
	\[
	\psi : x \in \R_+ \mapsto \P^\uparrow_x \left( \int_0^\infty f(\xi_s) \dd s <\infty \right).
	\]
	Note that $\psi$ is measurable, non-negative and bounded, by standard properties of Markov processes. 
	
	We first claim that $(\psi(\xi_t), t \geq 0)$ is a closed martingale.
	Indeed, for all $x,t \geq 0$, as $\int_0^t f(\xi_s) \dd s < \infty$ $\P^\uparrow_x$-a.s., we have
	\begin{align*}
	\P^\uparrow_x \left( \int_0^\infty f(\xi_s) \dd s <\infty \,\middle|\, \mathcal{F}_t\right)
	&= \P^\uparrow_x \left( \int_0^\infty f(\xi_{t+s}) \dd s < \infty \,\middle|\, \mathcal{F}_t\right)\\
	&= \P^\uparrow_{\xi_t} \left( \int_0^\infty f(\xi_s) \dd s <\infty\right) = \psi(\xi_t), \quad \P^\uparrow_x\text{-a.s..}
	\end{align*}
	Therefore, $(\psi(\xi_t), t \geq 0)$ is a closed martingale. In particular, this yields
	\begin{equation}
	\label{eqn:limite}
	\lim_{t \to \infty} \psi(\xi_t) = \ind{\int_0^\infty f(\xi_s) \dd s <\infty}, \quad \P^\uparrow_x - \text{a.s..}
	\end{equation}
	
	We next prove that $\psi$ is non-increasing in $x$, using Lemma~\ref{lem:chd}. With notation therein, we observe that, for all $x>0$, $\int_0^m f(\xi_s) \dd s \!<\! \infty$ $\P^\uparrow_x$-a.s.. It follows that
	\begin{align*}
	\psi(x)
	= \P^\uparrow_x \left( \int_m^\infty f(\xi_s) \dd s <\infty \right)
	&= \P^\uparrow_x \left( \int_0^\infty f(\xi_{s+m}-\underline{\xi}+\underline{\xi}) \dd s < \infty \right)\\
	&= \P^\uparrow \left( \int_0^\infty f(\xi_s + v_x) \dd s < \infty \right),
	\end{align*}
    where $v_x$ is, under law $\P^\uparrow$, an independent variable of $\xi$ with the same law as the variable $\underline{\xi}$ under law $\P_x^\uparrow$. For all $x < y$, $v_x$ is stochastically dominated by $v_y$. Hence, as $f$ is non-increasing,
	$\int_0^\infty f(\xi_s + v_x) \dd s$ is stochastically larger than $\int_0^\infty f(\xi_s + v_y) \dd s$. 
	It follows that $\psi$ is non-increasing and non-negative, and hence $\lim_{x \to \infty} \psi(x) = \lambda$ exists. 
	
	Since $\xi_t \to \infty$ $\P^\uparrow_x$-a.s.\ by transience, we deduce by \eqref{eqn:limite}  that 
	\[
	\lambda =\lim_{t \to \infty} \psi(\xi_t)= \ind{\int_0^\infty f(\xi_s) \dd s <\infty},\quad \P_x^\uparrow\text{-a.s..}
	\]
	This implies that $\lambda \in \{0,1\}$ and that $\int_0^\infty f(\xi_s) \dd s <\infty$ holds with probability $0$ or $1$, depending on the value of $\lambda$. This completes the proof.
\end{proof}

\begin{proof}[Proof of Proposition~\ref{prop:finiteness}]
	By Lemmas~\ref{lem:DoB} and~\ref{lem:integral}, we first observe that
	\[
	\int_0^\infty f(\xi_s) \dd s < \infty \quad \P^\uparrow_x\text{-a.s.} \iff \int_0^\infty yf(y) \dd y < \infty.
	\]
	Then, as $\P^\uparrow_x\big(\int_0^\infty f(\xi_s) \dd s < \infty\big) \in \{0,1\}$ by Lemma~\ref{lem:01}, we deduce by contraposition that 
	\begin{align*}
	\int_0^\infty yf(y) \dd y = \infty &\iff \P^\uparrow_x\left(\int_0^\infty f(\xi_s) \dd s < \infty\right) < 1
	\iff \P^\uparrow_x\left(\int_0^\infty f(\xi_s) \dd s < \infty\right) =0,
	\end{align*}
	completing the proof.
\end{proof}

\section{Truncated derivative martingales and the spinal decomposition}
\label{sec:truncatedMartingale}

In this section, we use the renewal function of a Lévy process to introduce the truncated versions of a derivative martingale. We show that the non-degeneracy of the limit of the derivative martingale is equivalent to the uniform integrability of the truncated derivative martingales. We then give the spinal decomposition, describing the law of a branching Lévy process biased by the truncated martingales.

\subsection{Truncated derivative martingales}
\label{subsec:truncatedMartingale}


\begin{lemma}
	\label{lem:truncatedDerivativeMartingale}
	Let $b>0$. We set for $t \geq 0$:
	\begin{equation}
	\label{eqn:defTruncatedDerivativeMartingale}
	Z^b_t = \sum_{u \in \mathcal{N}_t} R(X_u(t)+b) \ind{\inf_{s \leq t} X_u(s) \geq -b} e^{-X_u(t)}.
	\end{equation}
	The process $Z^b:=(Z^b_t, t \geq 0)$ is a non-negative martingale, called the \emph{truncated derivative martingale}, that converges a.s.\ to a limit $Z^b_\infty\ge 0$ as $t\to\infty$.
\end{lemma}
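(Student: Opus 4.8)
The strategy is the standard one: show $Z^b$ is a non-negative martingale via the many-to-one lemma (Lemma~\ref{lem:manytoone}) and the branching property, then deduce almost sure convergence from the martingale convergence theorem. Non-negativity is immediate since $R \ge 0$ on $[0,\infty)$ and the indicator restricts to trajectories with $\inf_{s\le t} X_u(s) \ge -b$, on which $X_u(t) \ge -b$ and hence $R(X_u(t)+b)\ge 0$; integrability and the martingale property are the real content.

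First I would compute $\E(Z^b_t)$ and, more generally, $\E(Z^b_t \mid \mathcal F_s)$. For the expectation, apply Lemma~\ref{lem:manytoone} with the functional $f(X_\cdot(u),\, \cdot\le t) = R(X_t(u)+b)\ind{\inf_{s\le t}X_s(u)\ge -b} e^{X_t(u)} \cdot e^{-2X_t(u)}$ — more cleanly, write $Z^b_t = \sum_{u} g(X_\cdot(u))e^{-X_t(u)}$ with $g(w_\cdot)=R(w_t+b)\ind{\inf_{s\le t}w_s\ge -b}$, so that by the many-to-one lemma
\[
\E(Z^b_t) = \E\bigl(e^{\xi_t}\, R(\xi_t+b)\ind{\inf_{s\le t}\xi_s\ge -b}\, e^{-\xi_t}\bigr) = \E\bigl(R(\xi_t+b)\ind{\tau_{-b}>t}\bigr),
\]
where $\xi$ is the Lévy process with exponent $\Psi$ and $\tau_{-b}=\inf\{s:\xi_s<-b\}$. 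By the renewal identity \eqref{eqn:formulaRenewal} (with $x=b$), this equals $R(b)<\infty$, which gives integrability of $Z^b_t$ for every $t$. (Here one uses \eqref{eqn:H0} so that the many-to-one lemma applies with the exponent $\Psi$, and that $\xi$ does not drift to $-\infty$ so \eqref{eqn:formulaRenewal} holds.)

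Next, the martingale property. Fix $s\le t$ and condition on $\mathcal F_s$. By the branching property \eqref{eqn:branching}, given the configuration $(X_s(u), u\in\mathcal N_s)$ and the survival of the relevant trajectories up to time $s$, the particles alive at time $t$ descending from $u\in\mathcal N_s$ (with $\inf_{r\le s}X_r(u)\ge -b$) form an independent branching Lévy process rooted at $X_s(u)$; a descendant $v$ of $u$ satisfies $\inf_{r\le t}X_r(v)\ge -b$ iff $\inf_{s\le r\le t}X_r(v)\ge -b$ (given $u$ already stayed above $-b$). Writing $R(X_t(v)+b)=R((X_t(v)-X_s(u)) + (X_s(u)+b))$ and applying the many-to-one lemma to the shifted sub-process with the function $w\mapsto R(w + (X_s(u)+b))\ind{\inf w\ge -(X_s(u)+b)}$, the conditional expectation of the contribution of the descendants of $u$ is
\[
e^{-X_s(u)}\,\E\bigl(R(\xi_{t-s}+ X_s(u)+b)\,\ind{\tau_{-(X_s(u)+b)}>t-s}\bigr) = e^{-X_s(u)} R(X_s(u)+b),
\]
again by \eqref{eqn:formulaRenewal}, and the factor $e^{-X_s(u)}$ comes from the $e^{\xi}$ weight in the many-to-one lemma combined with the $e^{-X_t(v)}$ in the definition of $Z^b_t$. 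Summing over $u\in\mathcal N_s$ with $\inf_{r\le s}X_r(u)\ge -b$ recovers exactly $Z^b_s$, so $\E(Z^b_t\mid\mathcal F_s)=Z^b_s$. Finally, since $Z^b$ is a non-negative martingale it converges almost surely as $t\to\infty$ to an a.s.\ finite limit $Z^b_\infty\ge 0$.

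The main obstacle is purely bookkeeping: making the conditioning on $\mathcal F_s$ and the indicator $\ind{\inf_{s\le t}X_s(u)\ge -b}$ rigorous, i.e.\ carefully splitting the running-infimum constraint into its behaviour before and after time $s$ and checking that the shifted sub-tree is indeed an independent branching Lévy process to which the many-to-one lemma applies. There is no analytic difficulty — the renewal identity \eqref{eqn:formulaRenewal} does all the work — but one must be attentive to the (countably infinite) sum and invoke monotone convergence / Fubini to justify exchanging $\E(\cdot\mid\mathcal F_s)$ with the sum over $\mathcal N_s$.
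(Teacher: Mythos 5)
Your proof is correct and follows exactly the route the paper intends: the paper's own proof is a one-line remark that the martingale property follows from the branching property, the many-to-one lemma (Lemma~\ref{lem:manytoone}) and the renewal identity \eqref{eqn:formulaRenewal}, which is precisely what you carry out (the computation $\E\bigl(R(\xi_t+b)\ind{\tau_{-b}>t}\bigr)=R(b)$ and its conditional version at time $s$). The only difference is that you supply the bookkeeping the paper omits, and that is done correctly.
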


\begin{proof}
	The fact that $Z^b$ is a non-negative martingale is a straightforward consequence of the branching property of the branching Lévy process, the many-to-one lemma and equation \eqref{eqn:formulaRenewal}.
\end{proof}

The family $(Z^b,b \geq 0)$ approaches the derivative martingale $Z$ in the following sense. 
\begin{lemma}
	\label{lem:asymptotTruncated}
	Under assumptions \eqref{eqn:H0} and \eqref{eqn:var}, $Z_t$ converges $\P$-a.s.\@ to a non-negative 
	limit $Z_\infty\ge 0$ as $t\to \infty$. Moreover, with $c_{\star}>0$ the constant given in \eqref{eqn:estimatesRenewal}, there is the identity
	\[
	c_\star Z_\infty = \lim_{b \to \infty} Z^b_\infty, \quad \P\text{-a.s..}
	\]
\end{lemma}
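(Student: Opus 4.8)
The first claim — that $Z_t$ converges $\P$-a.s.\ to a non-negative limit — I would handle by relating $Z_t$ to the truncated martingales $Z^b_t$, which by Lemma~\ref{lem:truncatedDerivativeMartingale} are already known to converge a.s. The natural strategy is to compare $Z_t$ with $c_\star^{-1} Z^b_t$ on the event where the spine-type truncation is not active, and then let $b\to\infty$. Concretely, write $\underline X_t(u) := \inf_{s\le t} X_s(u)$ and split, for each fixed $b$,
\[
Z_t = \sum_{u\in\mathcal N_t} X_t(u) e^{-X_t(u)} \ind{\underline X_t(u) \ge -b} + \sum_{u\in\mathcal N_t} X_t(u) e^{-X_t(u)} \ind{\underline X_t(u) < -b}.
\]
The plan is to show the second sum is negligible as $b\to\infty$ (uniformly in $t$ along a suitable sequence, or in probability), while on the first sum we substitute $X_t(u) \approx c_\star^{-1} R(X_t(u)+b)$ using the asymptotics $R(x)/x \to c_\star$ from \eqref{eqn:estimatesRenewal}. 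The key technical point is that the contribution of particles with $X_t(u)$ large but $R(X_t(u)+b)$ not yet in its linear regime, and of particles with $X_t(u)$ negative (where $X_t(u)e^{-X_t(u)}$ is negative but $R(X_t(u)+b)e^{-X_t(u)}\ge 0$), must both be controlled. For the negative-position particles one uses that $\{u: X_t(u) < 0,\ \underline X_t(u)\ge -b\}$ contributes $o(1)$: indeed, by the many-to-one lemma, $\E(\sum_{u} \ind{X_t(u)\in[-b,0]} |X_t(u)| e^{-X_t(u)}) = \E(|\xi_t| e^{\xi_t}\cdot e^{\xi_t}\ind{\xi_t\in[-b,0]}\cdot\dots)$ — more precisely $\E(\sum_u g(X_t(u))) = \E(e^{\xi_t} g(\xi_t))$, and since $\xi$ is a centred finite-variance Lévy process, $e^{\xi_t}\cdot|\xi_t|e^{-\xi_t}\ind{\xi_t\le 0}$ has bounded expectation, so a spine/Doob-$h$-transform argument localises where the mass sits.

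The cleaner route, which I would actually pursue, is via the spinal decomposition announced in Section~\ref{sec:truncatedMartingale}: express $Z^b_\infty$ through a change of measure driven by the renewal function $R(\cdot+b)$, under which the spine follows the Lévy process conditioned to stay above $-b$. Under $\P^\uparrow$, the spine drifts to $+\infty$ (transience, Lemma~\ref{lem:chd} and the remark after it), so for the spine particle $R(X_t+b)/X_t \to c_\star$ as $t\to\infty$; and the contribution of non-spine subtrees to $Z^b_\infty$ versus $c_\star Z_\infty$ can be matched term by term because each such subtree contributes, conditionally on the spine, the same branching-Lévy derivative-type functional up to the factor $R/\mathrm{id}$ evaluated at large spine positions. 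Letting $b\to\infty$ then upgrades the pointwise spine asymptotic $R(x+b)/x\to c_\star$ (uniform on $x\ge 0$ as $b\to\infty$, since $R(x+b)/(x) \ge R(b)/x \to\infty$ is the wrong direction — rather $\sup_{x\ge A}|R(x+b)/x - c_\star|\to 0$ as $A\to\infty$ uniformly in $b\ge 0$, combined with $b\to\infty$ to kill the truncation indicator) into the claimed identity $c_\star Z_\infty = \lim_{b\to\infty} Z^b_\infty$. Monotonicity helps here: $Z^b_\infty$ is increasing in $b$ (the indicator $\ind{\underline X_t(u)\ge -b}$ is increasing and $R$ is increasing), so the limit $\lim_{b\to\infty}Z^b_\infty$ exists in $[0,\infty]$ automatically, and one only needs two-sided bounds $Z^b_\infty \le C(c_\star Z_\infty + \text{error}(b))$ and $Z^b_\infty \ge c(c_\star Z_\infty - \text{error}(b))$.

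\textbf{Main obstacle.} The delicate step is the a.s.\ convergence of $Z_t$ itself and the interchange of the limits $t\to\infty$ and $b\to\infty$: $Z_t$ is only a \emph{signed} martingale, not bounded below, so one cannot invoke the martingale convergence theorem directly, and the naive bound $|Z_t| \le$ (something) $\cdot Z^b_t$ fails because of the negative terms. I expect the resolution to require a careful decomposition of $Z_t$ into $c_\star^{-1}Z^b_t$ minus a remainder that is shown to converge a.s.\ (e.g.\ because it is itself a martingale, or dominated via \eqref{eqn:estimatesRenewal} and the a.s.\ convergence of an auxiliary additive-type martingale such as $W^b_t := \sum_{u} \ind{\underline X_t(u)\ge -b} e^{-X_t(u)}$, whose convergence to $0$ is classical in the boundary case), together with a uniform-integrability-free argument that the $b\to\infty$ error terms vanish $\P$-a.s.\ on the survival set. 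Pinning down this remainder and proving its a.s.\ convergence is where the real work lies; everything else is bookkeeping with the renewal estimates \eqref{eqn:estimatesRenewal} and the many-to-one lemma.
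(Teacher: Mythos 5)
Your proposal stops at exactly the point where the real proof begins: you yourself flag the a.s.\ convergence of the signed process $Z_t$ and the control of a ``remainder'' as the main obstacle and leave it unresolved, and the device that removes this obstacle is missing from your plan. The paper does not split $Z_t$ and estimate discarded or negative terms; it adds a multiple of the additive martingale. Writing $W_t=\sum_{u\in\mathcal{N}_t}e^{-X_t(u)}$ and $M_s=\inf_{u\in\mathcal{N}_s}X_s(u)$, one has $Z_t+bW_t=\sum_{u\in\mathcal{N}_t}(X_t(u)+b)e^{-X_t(u)}$, and on the event $\{\inf_{s\le t}M_s\ge A_\epsilon-b\}$ every particle satisfies $X_t(u)+b\ge A_\epsilon$, so the truncation indicator in $Z^b_t$ is identically one and, by \eqref{eqn:estimatesRenewal}, $\frac{1}{c_\star+\epsilon}Z^b_t\le Z_t+bW_t\le\frac{1}{c_\star-\epsilon}Z^b_t$ with no remainder whatsoever. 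Note that the correct linearization is $R(x+b)\approx c_\star (x+b)$, not $R(x+b)\approx c_\star x$ as in your substitution $X_t(u)\approx c_\star^{-1}R(X_t(u)+b)$: the discrepancy is exactly $c_\star b\,W_t$, and it is harmless because $W_t\to0$ $\P$-a.s.\ in the boundary case (\cite[Theorem~1.1]{BeM18b}). This single external input does double duty: it kills the $bW_t$ correction as $t\to\infty$, and via $e^{-M_t}\le W_t$ it gives $\inf_{t\ge0}M_t>-\infty$ a.s., so the events above exhaust the probability space as $b\to\infty$. Combining with the a.s.\ convergence of each $Z^b$ (Lemma~\ref{lem:truncatedDerivativeMartingale}) and the monotonicity of $b\mapsto Z^b_\infty$ (which you did identify), letting $t\to\infty$, then $b\to\infty$, then $\epsilon\to0$ yields simultaneously the existence of $Z_\infty$ and the identity $c_\star Z_\infty=\lim_{b\to\infty}Z^b_\infty$.

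Neither of your suggested routes closes the gap as stated. The many-to-one bound for negative-position particles controls a first moment at fixed $t$, hence gives negligibility in expectation or in probability, not the $\P$-a.s., uniform-in-$t$ control needed to interchange $t\to\infty$ and $b\to\infty$. The ``cleaner route'' via the spinal decomposition aims at the wrong measure: the claim is a $\P$-a.s.\ identity between two limits, while $\hat{\Q}^b$ is absolutely continuous with respect to $\P$ only on each $\mathcal{F}_t$ (and not on $\mathcal{F}_\infty$ in general), so $\hat{\Q}^b$-a.s.\ asymptotics of the spine cannot be transferred to $\P$-a.s.\ statements about $Z_\infty$; in this paper the spinal decomposition is used only later, through Fact~\ref{fct}, to decide uniform integrability. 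Finally, no uniformity in $b$ of the renewal asymptotics is needed: $\epsilon$ is fixed first and $A_\epsilon$ does not depend on $b$.
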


\begin{proof}
	By \eqref{eqn:estimatesRenewal}, the function $R$ satisfies $R(x) \sim c_\star x$ as $x \to \infty$. Hence, for all $\epsilon > 0$, there exists $A_{\epsilon} > 0$ such that $x \in [R(x)/(c_\star + \epsilon),R(x)/(c_\star - \epsilon) ]$ for all $x \geq A_{\epsilon}$. In particular, for every $t\ge 0$,
	\[
	\frac{1}{c_\star + \epsilon} Z^b_t \leq  Z_t + b W_t \leq \frac{1}{c_\star-\epsilon} Z^b_t, \quad \text{on the event}\quad \Big\{\inf_{s \leq t} M_s \geq A_{\epsilon} - b\Big\},
	\]
where $W_t = \sum_{u \in \mathcal{N}_t} e^{-X_t(u)}$ is the additive martingale and $M_s = \inf_{u \in \mathcal{N}_s} X_s(u)$. 
	
	By \cite[Theorem~1.1]{BeM18b}, under assumption \eqref{eqn:H0} we have $\lim_{t \to \infty} W_t = 0$ a.s.. Since $e^{-M_t} \leq W_t \to 0$, it follows that $\inf_{t \geq 0} M_t > -\infty$ a.s..
	Therefore, letting $t \to \infty$, we have 
	\[
	\frac{1}{c_\star + \epsilon} Z^b_\infty\le \liminf_{t\to \infty}Z_t\le  \limsup_{t \to \infty} Z_t \le \frac{1}{c_\star-\epsilon} Z^b_\infty\quad \text{on the event} \quad \Big\{\inf_{t \geq 0} M_t \geq A_{\epsilon} - b\Big\} . 
	\]
	Observe that $b \mapsto Z_t^b$ is non-decreasing  for all $t \geq 0$, so $b \mapsto Z_\infty^b$ is a.s.\ non-decreasing. Therefore, $\lim_{b \to \infty} Z_\infty^b$ exists a.s.. Then letting $b \to \infty$, as $\lim_{b \to \infty}\P(\inf_{t \geq 0} M_t \geq A_{\epsilon} - b)=1$, we deduce that 
	\[
	\lim_{b\to\infty} \frac{1}{c_\star + \epsilon} Z^b_\infty\le \liminf_{t\to \infty}Z_t\le  \limsup_{t \to \infty} Z_t \le \frac{1}{c_\star-\epsilon} \lim_{b\to\infty} Z^b_\infty, \quad \P\text{-a.s.}.
	\]
	Finally, letting $\epsilon \to 0$ leads to the desired statement. 
\end{proof}

The previous lemma allows us to study the non-degeneracy of the limit of the derivative martingale $Z$ via the uniform integrability of the truncated martingales $Z^b$. 

\begin{corollary}
	\label{cor:ncandsc}
	If there exists $b > 0$ such that $Z^b$ is uniformly integrable, then $Z_\infty$ is non-degenerate. 
	On the other hand, if $Z^b_\infty = 0$ a.s.\ for all $b > 0$, then $Z_\infty = 0$ a.s..
\end{corollary}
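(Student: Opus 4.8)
The plan is to derive both implications from the almost sure identity $c_\star Z_\infty = \lim_{b\to\infty} Z^b_\infty$ established in Lemma~\ref{lem:asymptotTruncated}, combined with the inheritance dichotomy of Remark~\ref{rem:nonTrivial}. No new probabilistic input is needed beyond what has already been set up.

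For the first assertion, suppose $Z^b$ is uniformly integrable for some $b>0$. Then $Z^b_t \to Z^b_\infty$ in $L^1$, so $\E(Z^b_\infty) = \E(Z^b_0)$. Since at time $0$ there is a single particle at the origin whose trajectory up to time $0$ is constant equal to $0$, the definition \eqref{eqn:defTruncatedDerivativeMartingale} gives $Z^b_0 = R(b)$, and $R(b) \ge c_1 b > 0$ by \eqref{eqn:estimatesRenewal}; hence $\P(Z^b_\infty > 0) > 0$. As $b' \mapsto Z^{b'}_t$ is non-decreasing for each fixed $t$ — a fact recorded in the proof of Lemma~\ref{lem:asymptotTruncated} — we have $Z^{b'}_\infty \ge Z^b_\infty$ for all $b' \ge b$, so $\lim_{b'\to\infty} Z^{b'}_\infty \ge Z^b_\infty$ and therefore $\P\big(\lim_{b'\to\infty} Z^{b'}_\infty > 0\big) > 0$. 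By Lemma~\ref{lem:asymptotTruncated} and $c_\star > 0$, this gives $\P(Z_\infty > 0) > 0$. Finally, Remark~\ref{rem:nonTrivial} (the event $\{Z_\infty = 0\}$ is inherited, hence has probability $0$ or $1$ on the survival set) upgrades this to $Z_\infty > 0$ a.s.\ on the survival event, i.e.\ $Z_\infty$ is non-degenerate.

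For the second assertion, if $Z^b_\infty = 0$ a.s.\ for every $b>0$, then $\lim_{b\to\infty} Z^b_\infty = 0$ a.s., so Lemma~\ref{lem:asymptotTruncated} yields $c_\star Z_\infty = 0$ a.s., and since $c_\star > 0$ we conclude $Z_\infty = 0$ a.s.

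I do not foresee a genuine obstacle: the only points requiring a word of justification are the strict positivity $\E(Z^b_0) = R(b) > 0$ — which is exactly what guarantees that uniform integrability produces a genuinely non-trivial limit rather than $0$ — and the monotonicity of $b \mapsto Z^b_\infty$, both of which follow immediately from \eqref{eqn:defTruncatedDerivativeMartingale} and from facts already established in the proof of Lemma~\ref{lem:asymptotTruncated}.
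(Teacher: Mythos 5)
Your proof is correct and follows essentially the same route as the paper: uniform integrability gives $\E(Z^b_\infty)=\E(Z^b_0)=R(b)>0$ hence $\P(Z^b_\infty>0)>0$, then monotonicity in $b$ and Lemma~\ref{lem:asymptotTruncated} transfer this to $Z_\infty$, while the converse direction is an immediate consequence of the same lemma. Your additional appeal to Remark~\ref{rem:nonTrivial} to upgrade $\P(Z_\infty>0)>0$ to almost sure positivity on the survival set is a harmless refinement the paper defers to that remark as well.
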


\begin{proof}
	Assume first there exists $b>0$ such that $Z^b$ is uniformly integrable, then $\P(Z^b_\infty>0)>0$.  As $b \mapsto Z^b_\infty$ is non-decreasing, we deduce by Lemma~\ref{lem:asymptotTruncated} that $\P(Z_\infty > 0) > 0$. 
	
	If $Z^b_\infty=0$ a.s.\ for every $b>0$, then $Z_\infty=0$ a.s.\ by Lemma~\ref{lem:asymptotTruncated}.
\end{proof}

\begin{remark}
	\label{rem:ncs}
	In proving Theorem~\ref{thm:main}, we will also show that the following three facts are equivalent:
	\begin{enumerate}
		\item $Z_\infty > 0$ a.s.\ on the survival set of the branching Lévy process;
		\item there exists $b > 0$ such that $Z^b$ is uniformly integrable;
		\item for every $b > 0$, $Z^b$ is uniformly integrable.
	\end{enumerate}
\end{remark}

\subsection{Spinal decompositions of the branching Lévy process}
\label{sec:spinalDecompositionProper}

The spinal decomposition consists in an alternative description of the law of a branching process biased by a non-negative martingale as a branching process with a special individual called the spine. This alternative description in turns allows us to study whether the martingale is uniformly integrable or not, thanks to the following classical argument.
\begin{fact}
	\label{fct}
	Let $(M, \mathcal{F})$ be a non-negative $\P$-martingale with $\E(M_0)=1$ and set $\Q := M \cdot \P$ to be the law $\P$ biased by the martingale $M$, which means that, for each $t \geq 0$, 
	$
	\left. \frac{\dd \Q}{\dd \P} \right|_{\mathcal{F}_t} = M_t$ a.s..
	 We have
	\begin{align*}
	(M_t, t \geq 0) \text{ is uniformly integrable} &\iff \liminf_{t \to \infty} M_t < \infty \quad \Q\text{-a.s..}\\
	M_\infty = 0 \quad \P\text{-a.s.} &\iff \limsup_{t \to \infty} M_t = \infty \quad \Q\text{-a.s..}
	\end{align*}
\end{fact}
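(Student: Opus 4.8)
The plan is to deduce both equivalences from the single identity
\[
\Q\Big( \sup_{t\ge0} M_t < \infty \Big) = \E_\P(M_\infty), \qquad \text{where } M_\infty := \limsup_{t\to\infty} M_t,
\]
after which the two assertions of Fact~\ref{fct} follow by short deterministic bookkeeping. First I would record two elementary facts. Under $\P$, the martingale convergence theorem gives $M_t \to M_\infty \in [0,\infty)$ a.s.\ with $\E_\P(M_\infty) \le 1$, and Doob's maximal inequality $\P(\sup_t M_t \ge \lambda) \le 1/\lambda$ shows $\sup_t M_t < \infty$ $\P$-a.s. Under $\Q$, the process $(1/M_t)_{t\ge0}$ (with the convention $1/0 := \infty$) is a non-negative $\Q$-supermartingale: indeed $\Q(M_t=0) = \E_\P(M_t\ind{M_t=0}) = 0$, and for $s \le t$ and $A \in \mathcal{F}_s$, using that a non-negative martingale stays at $0$ once it hits $0$ (so $\{M_s=0\} \subseteq \{M_t=0\}$ up to a $\P$-null set),
\[
\E_\Q\big( M_t^{-1} \ind{A} \big) = \P\big( A \cap \{M_t>0\} \big) \le \P\big( A \cap \{M_s>0\} \big) = \E_\Q\big( M_s^{-1} \ind{A} \big).
\]
Hence $1/M_t$ converges $\Q$-a.s.\ to a finite limit, so $M_t$ converges $\Q$-a.s.\ to a limit in $(0,\infty]$; since $M$ is càdlàg this means that, $\Q$-a.s., the events $\{\liminf_t M_t<\infty\}$, $\{\limsup_t M_t<\infty\}$, $\{\lim_t M_t<\infty\}$ and $\{\sup_t M_t<\infty\}$ all coincide.

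To establish the identity — which I expect to be the only delicate point — I would localise. For $n\ge1$ put $\tau_n := \inf\{t\ge0 : M_t\ge n\}$ and $N^{(n)} := (M_{t\wedge\tau_n})_{t\ge0}$; this is a non-negative $\P$-martingale with $\E_\P(N^{(n)}_0)=1$. It is moreover uniformly integrable: on $\{\tau_n=\infty\}$ one has $\sup_t N^{(n)}_t \le n$ and on $\{\tau_n<\infty\}$ one has $\sup_t N^{(n)}_t \le n\vee M_{\tau_n}$, so $\sup_t N^{(n)}_t \le n + (M_{\tau_n}-n)^+\ind{\tau_n<\infty}$, and the latter is $\P$-integrable because $\E_\P(M_{\tau_n}\ind{\tau_n<\infty}) \le 1$ by Fatou applied to $N^{(n)}$. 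Consequently $N^{(n)}_t \to N^{(n)}_\infty := M_{\tau_n}\ind{\tau_n<\infty} + M_\infty\ind{\tau_n=\infty}$ in $L^1(\P)$ with $\E_\P(N^{(n)}_\infty)=1$, and the tilted law $\Q^{(n)} := N^{(n)}\cdot\P$ equals $N^{(n)}_\infty\cdot\P$ on $\mathcal{F}_\infty$. Since $N^{(n)}_t = M_t$ on $\{\tau_n>t\}\in\mathcal{F}_t$, for $A\in\mathcal{F}_t$ one has $\Q^{(n)}(A\cap\{\tau_n>t\}) = \E_\P(M_t\ind{A\cap\{\tau_n>t\}}) = \Q(A\cap\{\tau_n>t\})$; letting $t\to\infty$ and extending from the algebra $\bigcup_s\mathcal{F}_s$ to $\mathcal{F}_\infty$ by uniqueness of finite measures, I would obtain
\[
\Q\big( A\cap\{\tau_n=\infty\} \big) = \E_\P\big( M_\infty\ind{A\cap\{\tau_n=\infty\}} \big), \qquad A\in\mathcal{F}_\infty.
\]
Taking $A=\Omega$ and letting $n\to\infty$, so that $\{\tau_n=\infty\}\uparrow\{\sup_t M_t<\infty\}$, monotone convergence yields $\Q(\sup_t M_t<\infty) = \E_\P(M_\infty\ind{\sup_t M_t<\infty}) = \E_\P(M_\infty)$, the last step because $\sup_t M_t<\infty$ $\P$-a.s. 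Conceptually this identifies the absolutely continuous part of $\Q$ with respect to $\P$ as $M_\infty\cdot\P$ and locates its singular part on $\{\sup_t M_t=\infty\}$; the localisation by $\tau_n$ is what makes the argument go through despite the possibly uncontrolled overshoots of $M$.

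Finally I would conclude as follows. For a non-negative $\P$-martingale with $\E_\P(M_0)=1$, uniform integrability is equivalent to $\E_\P(M_\infty)=1$ (Scheffé's lemma, using $M_t\to M_\infty$ $\P$-a.s.\ and $M_t\ge0$), while $M_\infty=0$ $\P$-a.s.\ is equivalent to $\E_\P(M_\infty)=0$. Combining this with the identity and with the $\Q$-a.s.\ coincidence of events from the first paragraph,
\[
M \text{ uniformly integrable} \iff \E_\P(M_\infty)=1 \iff \Q\big( \sup_t M_t<\infty \big)=1 \iff \liminf_{t\to\infty} M_t<\infty \ \ \Q\text{-a.s.},
\]
\[
M_\infty=0 \ \P\text{-a.s.} \iff \E_\P(M_\infty)=0 \iff \Q\big( \sup_t M_t<\infty \big)=0 \iff \limsup_{t\to\infty} M_t=\infty \ \ \Q\text{-a.s.},
\]
which are exactly the two statements of Fact~\ref{fct}.
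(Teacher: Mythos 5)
Your proof is correct and, in substance, it is the paper's own argument: the paper deduces the Fact from the Lebesgue-type decomposition of $\Q$ with respect to $\P$ along the martingale limit (cited as \cite[Theorem 5.3.3]{Dur}) together with the observation that $(1/M_t)_{t\ge0}$ is a non-negative $\Q$-supermartingale, and these are exactly your two ingredients. The only difference is that you re-derive the decomposition identity $\Q(\sup_t M_t<\infty)=\E_\P(M_\infty)$ yourself, via the stopping times $\tau_n$ and uniform integrability of the stopped martingales, instead of invoking the textbook result.
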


Fact~\ref{fct} is a consequence of \cite[Theorem 5.3.3]{Dur}, and the fact that $(1/M_t, t \geq 0)$ is a non-negative $\Q$-supermartingale, thus having a finite limit $\Q$-a.s..  
In view of Fact~\ref{fct}, we will study the spinal decomposition associated with a  truncated martingale $Z^b$ and explore its asymptotic behaviour.

To this end, we begin by introducing the spinal decomposition associated to the critical additive martingale $(W_t= \sum_{u \in \mathcal{N}_t} e^{-X_t(u)}, t\ge 0)$. 
Let $\P$ be the law of the branching Lévy process and $\mathcal{F}$ its natural filtration. Using the $\P$-martingale $(W,\mathcal{F})$, we define the measure $\bar{\P}$ by
\begin{equation}
\label{eqn:barPdef}
\bar{\P} := W \cdot \P. 
\end{equation}
This change of measure was considered in \cite{BeM18b,IkM18} to study the asymptotic behaviour of additive martingales, and is the counterpart of results of Lyons \cite{Lyo97} for branching random walks.

To obtain an alternative representation of $\bar{\P}$, we construct a new branching process, with a distinguished individual called the spine. Specifically, define a sigma-finite measure $\hat{\Lambda}$ on $\mathcal{P} \times \N$ by
\begin{equation}
\label{eqn:hatLambdaDef}
\hat{\Lambda}(\dd \x, \dd k) := \sum_{j \geq 1} e^{-x_j} \Lambda(\dd \x) \delta_j(\dd k).
\end{equation}
Let $\beta$ be a Brownian motion and $\hat{{N}}(\dd t, \dd \x, \dd k)$ an independent Poisson random measure on $\R_+ \!\times \!\mathcal{P} \!\times\! \N$ with intensity $\dd t\otimes \hat{\Lambda}(\dd \x,\dd k)$. Define a Lévy process $\hat{\xi}$ by the following compensated Poisson integral
\begin{multline}
\hat{\xi}_t = \sigma \beta_t + \hat{a} t + \int_{[0,t]\times \mathcal{P} \times \N} x_{k} \ind{|x_{k}| < 1} \hat{N}^c (\dd t, \dd \x, \dd k) \\
+ \int_{[0,t] \times \mathcal{P} \times \N} x_{k} \ind{|x_{k}| \geq 1} \hat{N} (\dd t, \dd \x, \dd k), 
\end{multline}
with $\hat{a}$ the quantity defined in \eqref{eqn:defhata}. The process $\hat{\xi}$ is well-defined and finite, thanks to equations \eqref{eqn:levyEve} and \eqref{eqn:expIntegrability} (see \cite{BeM18b} for more details on this construction). Moreover, $\hat{\xi}$ is a Lévy process with Lévy-Khinchine exponent $\Psi$ defined in \eqref{eqn:defPsi}.

The branching Lévy process with spine is then constructed as follows. The spine particle follows the trajectory of $\hat{\xi}$, while making offspring according to the point process $\hat{{N}}$. More precisely, for all atoms $(t,\x,k)$ of $\hat{N}$, the spine particle jumps at time $t$ from position $\hat{\xi}_{t-}$ to $\hat{\xi}_t = \hat{\xi}_{t-} + x_{k}$, while for all $j \neq k$, it creates a new particle at position $\hat{\xi}_{t-} + x_j$. Each newborn particle then starts from its current birth time and location a new independent branching Lévy process with law $\P$. The set of particles alive at time $t$ is again denoted by $\mathcal{N}_t$. For $u\in \mathcal{N}_t$, let $(X_s(u), s \leq t)$ be the trajectory of this particle. The label of the spine particle at time $t$ is written as $w_t \in \mathcal{N}_t$. The law of the branching Lévy process with spine $(X,\mathcal{N},w)$ thus defined is denoted by $\hat{\P}$. The spinal decomposition is the following result.

\begin{lemma}
	\label{lem:spinalDecomposition}
	The law of $(X,\mathcal{N})$ is the same under laws $\bar{\P}$ and $\hat{\P}$. Moreover, one has
	\begin{equation}
	\label{eqn:chooseSpine}
	\hat{\P}(w_t = u \mid \mathcal{F}_t) = e^{-X_t(u)} / W_t, \quad \forall t\ge 0. 
	\end{equation}
\end{lemma}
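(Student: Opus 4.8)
The plan is to prove the spinal decomposition by induction on the successive splitting times along the spine, mimicking the now-standard proof of Lyons \cite{Lyo97} adapted to branching Lévy processes as in \cite{BeM18b}. First I would record the structure of the law $\bar{\P}=W\cdot\P$: since $W$ is a non-negative $\P$-martingale with $\E(W_0)=1$, the measure $\bar{\P}$ is well-defined on each $\mathcal{F}_t$, and it suffices to identify, for every fixed $t\ge 0$ and every bounded $\mathcal{F}_t$-measurable functional $F$ of $(X,\mathcal{N})$, the quantity $\bar{\E}(F)=\E(W_t F)$ with $\hat{\E}(F)$. The key identity that drives everything is the many-to-one type decomposition
\[
\E(W_t F)=\E\bigg(\sum_{u\in\mathcal{N}_t}e^{-X_t(u)}F\bigg)=\hat{\E}\big(F\,\indset{w_t\text{ is a specified particle}}\big)\cdot W_t^{-1}\text{-weights},
\]
which I would make precise by showing that under $\hat{\P}$, the joint law of the trajectory $(\hat{\xi}_s,s\le t)$ of the spine together with the point process of births off the spine coincides with the law, under $\bar{\P}$, of a size-biased pick from $\mathcal{N}_t$ with weight $e^{-X_t(u)}/W_t$ together with its ancestral line and the offspring hung along it.

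The heart of the argument is a Girsanov/Radon--Nikodym computation at the level of the spine trajectory and its Poissonian reproduction. Concretely, I would argue that the tilt by $W_t$ decomposes as a product over the atoms of the branching structure: along the ancestral line of a tagged particle, the displacement law of a generic particle (a Lévy process with exponent $\kappa(1+i\cdot)$-type normalization) gets tilted by $e^{\hat\xi_t}$, which by the many-to-one lemma (Lemma~\ref{lem:manytoone}) is exactly the change turning a $\Psi$-Lévy process into the spine process $\hat\xi$; simultaneously, the reproduction measure $\Lambda(\dd\x)$ along the spine is tilted to $\sum_{j\ge1}e^{-x_j}\Lambda(\dd\x)\delta_j(\dd k)=\hat\Lambda(\dd\x,\dd k)$, which accounts both for the enhanced jump/birth rate of the spine and for the selection of which child $k$ becomes the new spine, while the remaining children $j\ne k$ start fresh independent $\P$-branching Lévy processes. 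Summing the tilted weight $e^{-X_t(u)}$ over all $u\in\mathcal{N}_t$ reconstitutes $W_t$ and cancels the normalization, yielding $\bar{\E}(F)=\hat{\E}(F)$; the same bookkeeping, now retaining which particle was tagged, gives $\hat{\P}(w_t=u\mid\mathcal{F}_t)=e^{-X_t(u)}/W_t$.

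The main obstacle, and where care is genuinely needed, is that the birth intensity $\Lambda$ may be infinite (the branching Lévy process has "possibly infinite birth intensity"), so the reproduction along the spine and along all sub-branching-processes cannot be enumerated event by event in the naive way; the Poisson integrals defining $\hat\xi$ are only conditionally convergent, requiring compensation. To handle this I would first establish the identity for a truncated model, where only births with $x_1<1$ or at least one child $x_j>-\infty$ with $|x_j|\ge\epsilon$ are kept (finite intensity by \eqref{eqn:levyEve}--\eqref{eqn:expIntegrability}), prove the spinal identity there by the finite-rate argument of \cite{Lyo97,BeM18b}, and then pass to the limit $\epsilon\downarrow0$ using the convergence of the truncated branching Lévy processes to $(X,\mathcal{N})$ together with the $L^1(\P)$-convergence of the corresponding additive martingales and dominated convergence under the tilted measures; the compensation terms in $\hat\xi$ are precisely what survives this limit on the spine side. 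Once the limiting identity holds for a generating family of bounded continuous functionals $F$, a monotone class argument extends it to all bounded $\mathcal{F}_t$-measurable $F$, and letting $t$ vary gives equality of the laws of $(X,\mathcal{N})$ on $\bigvee_t\mathcal{F}_t$, completing the proof. For the precise computations we refer to \cite[Section~4]{BeM18b}, the present construction being a direct transcription with the sign convention of \eqref{eqn:H0}.
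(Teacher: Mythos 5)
The paper itself does not prove this lemma: it refers to \cite[Theorem 5.2]{ShW19} for a direct continuous-time proof and to \cite[Lemma 2.3]{BeM18b} for a ``simple argument'' that goes through the spinal decomposition of the discrete-time skeleton $(\bfZ_{nt}, n \ge 0)$, which is a branching random walk. Your route --- Lyons-type size-biasing combined with a Girsanov computation for the Poissonian reproduction along the tagged line, established first at finite birth intensity and then extended by approximation --- is the third standard route, closer in spirit to \cite{ShW19} (and to the construction of branching Lévy processes as increasing limits in \cite{Ber16,BeM18}) than to the skeleton argument the paper cites. As an overall strategy it is legitimate, although as written it is an outline that ultimately defers the actual computations to \cite{BeM18b}.

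There is, however, a concrete error in the one technical step you do make explicit: the truncation you propose does not have finite birth intensity. The set of atoms with $x_1<1$ has infinite $\Lambda$-mass in general, since \eqref{eqn:levyEve} only controls $1\wedge x_1^2$ and small parent jumps may accumulate; and the set of atoms possessing some child with $|x_j|\ge\epsilon$ may also have infinite mass, because \eqref{eqn:expIntegrability} weights children only by $e^{-\theta x_j}$ and hence puts no bound on the rate of birth events whose children lie far to the right. A truncation that works keeps the full motion of every particle but retains only those birth events having at least one child $j\ge 2$ with $x_j\le K$: by \eqref{eqn:expIntegrability} this family has $\Lambda$-mass at most $e^{\theta K}\int_{\mathcal{P}}\sum_{j\ge 2}e^{-\theta x_j}\Lambda(\dd \x)<\infty$, and letting $K\to\infty$ recovers $(X,\mathcal{N})$ as an increasing limit. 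Relatedly, in the limiting step you should track that the truncated process is no longer in the boundary case (its cumulant at $1$ is nonzero), so the truncated additive martingale and the law of the truncated spine carry an extra normalisation $e^{-t\kappa_K(1)}$ which only vanishes in the limit $K\to\infty$; the dominated-convergence step must be phrased with these normalisations in place. With the truncation corrected and this bookkeeping added, your outline matches the arguments of the references the paper relies on.
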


We refer to \cite[Theorem 5.2]{ShW19} for the proof of the spinal decomposition for branching Lévy processes, and to \cite[Lemma 2.3]{BeM18b} for a simple argument based on the spinal decomposition of branching random walks which could be adapted to our settings. The spinal decomposition was introduced by Lyons, Pemantle and Peres in \cite{LPP95} for Galton-Watson processes. The result was then refined by Lyons \cite{Lyo97} to study additive martingales in a branching random walk, and was further extended to general martingales based on additive functionals of a branching random walk in \cite{BiK04}.

We now discuss a similar extension in the settings of branching Lévy processes. Consider for every $b>0$ the law $\Q^b$ defined by
\begin{equation*}
\Q^b = \frac{Z^b}{R(b)} \cdot \P.
\end{equation*}
Thanks to Lemma~\ref{lem:spinalDecomposition}, one straightforwardly obtains a spinal decomposition result for the law $\Q^b$.

\begin{lemma}
	\label{lem:spinalDerivative}
	Let $b > 0$, we define a measure $\hat{\Q}^b$ by setting for all $t \geq 0$
	\begin{equation}
	\left.\frac{\dd \hat{\Q}^b}{\dd \hat{\P}}\right|_{\mathcal{F}_t} = \frac{R(X_t(w_t) + b)}{R(b)} \ind{\inf_{s \leq t} \!X_s(w_t) > -b}.
	\end{equation}
	Then the law of $(X,\mathcal{N})$ is the same under laws $\Q^b$ and $\hat{\Q}^b$.
\end{lemma}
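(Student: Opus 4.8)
The plan is to derive this from the additive spinal decomposition (Lemma~\ref{lem:spinalDecomposition}) together with the renewal martingale identity \eqref{eqn:formulaRenewal}. Set
$$D_t := \frac{R(X_t(w_t) + b)}{R(b)} \ind{\inf_{s \leq t} X_s(w_t) > -b}, \qquad t \geq 0,$$
so that $\hat{\Q}^b = D \cdot \hat{\P}$ on the filtration $(\hat{\mathcal{F}}_t)_{t \geq 0}$ enlarged to record the spine $(w_t)$. I will proceed in three steps: first, check that $(D_t)$ is a unit-mean non-negative $\hat{\P}$-martingale for $(\hat{\mathcal{F}}_t)$, so that $\hat{\Q}^b$ is a well-defined probability measure consistent in $t$; second, compute the $\mathcal{F}_t$-conditional density $\E_{\hat{\P}}(D_t \mid \mathcal{F}_t) = Z^b_t/(R(b) W_t)$; third, compare the densities of $\Q^b$ and $\hat{\Q}^b$ with respect to $\P$ restricted to $\mathcal{F}_t$.

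For the first step, recall that under $\hat{\P}$ the spine trajectory $(X_s(w_t), s \leq t)$ is the centred Lévy process $\hat{\xi}$ with Lévy--Khinchine exponent $\Psi$ used in the construction of the spinal decomposition. Thus $D_t$ is, up to the shift by $b$, exactly the renewal martingale of $\hat{\xi}$, and the identity $\E_{\hat{\P}}(D_t \mid \hat{\mathcal{F}}_s) = D_s$ for $s \leq t$ follows from the Markov property of $\hat{\xi}$ and \eqref{eqn:formulaRenewal} applied to the increments of $\hat{\xi}$ after time $s$, started from $\hat{\xi}_s$. Since the branching Lévy process starts from $0$ and $b > 0$, we get $D_0 = R(b)/R(b) = 1$, hence $\E_{\hat{\P}}(D_t) = 1$ for all $t$ and the change of measure defining $\hat{\Q}^b$ is consistent.

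For the second step, averaging over the choice of the spine via \eqref{eqn:chooseSpine}, namely $\hat{\P}(w_t = u \mid \mathcal{F}_t) = e^{-X_t(u)}/W_t$, gives
$$\E_{\hat{\P}}(D_t \mid \mathcal{F}_t) = \sum_{u \in \mathcal{N}_t} \frac{R(X_t(u)+b)}{R(b)} \ind{\inf_{s \leq t} X_s(u) \geq -b}\, \frac{e^{-X_t(u)}}{W_t} = \frac{Z^b_t}{R(b)\, W_t},$$
the discrepancy between the strict and non-strict inequality in the indicator being immaterial since $R$ vanishes on $(-\infty,0)$. For the third step, fix $t \geq 0$ and a bounded $\mathcal{F}_t$-measurable functional $F$ of $(X,\mathcal{N})$; using the second step, then that $\frac{Z^b_t}{R(b) W_t} F$ is a functional of $(X,\mathcal{N})$ alone and hence has the same law under $\hat{\P}$ and $\bar{\P}$ by Lemma~\ref{lem:spinalDecomposition}, and then $\bar{\P} = W \cdot \P$,
$$\E_{\hat{\Q}^b}(F) = \E_{\hat{\P}}(D_t F) = \E_{\hat{\P}}\!\Big(\tfrac{Z^b_t}{R(b) W_t}\, F\Big) = \E_{\bar{\P}}\!\Big(\tfrac{Z^b_t}{R(b) W_t}\, F\Big) = \E_{\P}\!\Big(\tfrac{Z^b_t}{R(b)}\, F\Big) = \E_{\Q^b}(F).$$
Since $t$ and $F$ are arbitrary, $(X,\mathcal{N})$ has the same law under $\hat{\Q}^b$ and $\Q^b$, as claimed.

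The whole argument is essentially bookkeeping once Lemma~\ref{lem:spinalDecomposition} is available; the one point that genuinely requires care is the first step---confirming that $D_t$ is a $\hat{\P}$-martingale for the \emph{enlarged} filtration---where the boundary assumption \eqref{eqn:H0} (which makes $\hat{\xi}$ centred, so that $R$ obeys \eqref{eqn:formulaRenewal}) enters, together with carefully tracking on which $\sigma$-algebra each of the densities against $\hat{\P}$, $\bar{\P}$, $\P$ is taken.
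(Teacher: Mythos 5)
Your argument is correct and follows essentially the same route as the paper's proof: average the spine density over the choice of spine via \eqref{eqn:chooseSpine} to get $\E_{\hat{\P}}(D_t\mid\mathcal{F}_t)=Z^b_t/(R(b)W_t)$, then transfer through Lemma~\ref{lem:spinalDecomposition} and $\bar{\P}=W\cdot\P$ to identify $\E_{\hat{\Q}^b}(F)=\E_{\Q^b}(F)$ for $\mathcal{F}_t$-measurable $F$. Your additional first step (verifying via \eqref{eqn:formulaRenewal} that $D$ is a unit-mean $\hat{\P}$-martingale for the enlarged filtration, so that $\hat{\Q}^b$ is consistently defined) is a detail the paper leaves implicit, and the strict-versus-nonstrict indicator discrepancy you mention is a notational looseness already present in the paper rather than a genuine obstacle.
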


\begin{proof}
	Let $V$ be an $\mathcal{F}_t$-measurable random variable. We observe that
	\begin{align*}
	\E_{\hat{\Q}^b} \left( V \right) &= \hat{\E}\left( V \frac{R(X_t(w_t) + b)}{R(b)} \ind{\inf_{s \leq t} X_s(w_t) > -b} \right)\\
	&= \hat{\E}\bigg( \frac{V}{W_t} \sum_{u \in \mathcal{N}_t} e^{-X_t(u)} \frac{R(X_t(u) + b)}{R(b)} \ind{\inf_{s \leq t} X_s(u) > -b} \bigg),
	\end{align*}
	by conditioning on $(X,\mathcal{N})$ and using Lemma~\ref{lem:spinalDecomposition}. Then, as $\{W_t = 0\} \subset \{ Z^b_t = 0\}$, one has
	\[
	\E_{\hat{\Q}^b} \left( V \right) = \bar{\E}\bigg( \frac{Z^b_t}{R(b) W_t} V \bigg) = \E_{\Q^b} (V). \qedhere
	\]
\end{proof}

In light of Lemma~\ref{lem:spinalDerivative} and the spinal construction of $\hat{\P}$, we can still describe $\hat{\Q}^b$ as the law of the particle system in which the spine particle follows the trajectory $\hat{\xi}$, while making offspring according to the point process $\hat{{N}}$. 
However, we stress that under law $\hat{\Q}^b$, $\hat{\xi}$ is a Lévy process with characteristic exponent $\Psi$ conditioned to stay above level $-b$, in the sense of Section~\ref{sec:levy-cond}; in other words, 
\begin{equation}\label{eqn:SpineTruncated}
\hat{\xi}~\text{under}~ \hat{\Q}^b~\text{has the same distribution as}~ (\xi - b)~\text{under}~ \P^{\uparrow}_b.
\end{equation}
In the rest of the article, we apply the results of Section~\ref{sec:levy-cond} to the process $\hat{\xi}+b$ under law $\hat{\Q}^b$, which is a Lévy process conditioned to stay positive.

Under $\hat{\Q}^b$, $\hat{N}$ is also no longer a Poisson random measure with intensity $\dd t\otimes \hat{\Lambda}(\dd \x,\dd k)$. We can nevertheless compute its compensator in the following sense.
\begin{lemma}\label{lem:2}
For any non-negative predictable function $W$ on $\R_+\times \mathcal{P} \times \N$, we have 
	\begin{align*}
	&	\E_{\hat{\Q}^b} \left(\int_{\R_+\times \mathcal{P}\times \N} W(t,\x ,k) \dd \hat{{N}}(\dd t, \dd \x, \dd k)\right)\\
	&= \E_{\hat{\Q}^b} \left(\int_{\R_+\times \mathcal{P}\times \N} W(t,\x ,k)\ind{b+\hat{\xi}_{t-}+x_k >0} \frac{R(b+ \hat{\xi}_{t-}+x_k)}{R(b+\hat{\xi}_{t-})}  \dd t \otimes \hat{\Lambda}(\dd \x,\dd k)\right).
	\end{align*}
\end{lemma}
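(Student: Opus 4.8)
The statement is a Girsanov-type compensation formula for the Poisson random measure $\hat N$ under the tilted law $\hat\Q^b$. The strategy is to reduce it to the already-known compensator of $\hat N$ under $\hat\P$ (where it is a Poisson random measure with intensity $\dd t\otimes\hat\Lambda(\dd\x,\dd k)$) and then carry the Radon--Nikodym density through the expectation. First I would recall from Lemma~\ref{lem:spinalDerivative} that $\frac{\dd\hat\Q^b}{\dd\hat\P}|_{\mathcal F_t} = \frac{R(X_t(w_t)+b)}{R(b)}\ind{\inf_{s\le t}X_s(w_t)>-b}$, and note that under the spinal construction $X_t(w_t)=\hat\xi_t$, so this density is $D_t := \frac{R(b+\hat\xi_t)}{R(b)}\ind{\inf_{s\le t}(b+\hat\xi_s)>0}$, which by \eqref{eqn:formulaRenewal} is a $\hat\P$-martingale.

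\textbf{Main computation.} By a standard monotone-class / predictable-projection argument it suffices to prove the identity for $W(t,\x,k)=\ind{(s,t']}(t)\,V\,g(\x,k)$ with $V$ a bounded $\mathcal F_s$-measurable variable, $0\le s\le t'$, and $g\ge0$ measurable; the general case follows by the monotone class theorem and approximation of predictable $W$. For such $W$, write the left-hand side as $\E_{\hat\Q^b}\big(V\int_{(s,t']\times\mathcal P\times\N} g(\x,k)\,\hat N(\dd r,\dd\x,\dd k)\big)$. Change measure to $\hat\P$: this equals $\hat\E\big(D_{t'}\, V\int_{(s,t']\times\mathcal P\times\N} g\,\dd\hat N\big)$, using that $D$ is a closed-on-$[0,t']$ martingale so $\E_{\hat\Q^b}(Y)=\hat\E(D_{t'}Y)$ for $\mathcal F_{t'}$-measurable $Y$. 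Now I would expand $D_{t'}$ using the exponential/multiplicative structure: because $\hat\xi$ under $\hat\P$ is a Lévy process driven by $\hat N$, the jumps of $\hat\xi$ on $(s,t']$ are exactly the marks $x_k$ of atoms of $\hat N$, and between atoms $D$ evolves in an $\mathcal F_s\vee\sigma(\hat\xi_r,r\le\cdot)$-adapted way. The cleanest route is to use the compensation formula for the Poisson random measure $\hat N$ under $\hat\P$ directly on the product $D_{t'}\cdot g$ viewed as a functional indexed by atoms: by the master formula for Poisson random measures (e.g. applied to the shift $D_{t'} = D_{t-}\cdot \frac{R(b+\hat\xi_{t-}+x_k)}{R(b+\hat\xi_{t-})}$ at the atom $(t,\x,k)$, with the rest of the trajectory contributing a further factor whose $\hat\P$-conditional mean is $1$ by the martingale property of $D$), one gets
\begin{align*}
\hat\E\!\left(D_{t'} V\!\!\int_{(s,t']\times\mathcal P\times\N}\!\! g\,\dd\hat N\right)
&= \hat\E\!\left(V\!\!\int_{(s,t']\times\mathcal P\times\N}\!\! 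D_{t-}\,\frac{R(b+\hat\xi_{t-}+x_k)}{R(b+\hat\xi_{t-})}\ind{b+\hat\xi_{t-}+x_k>0}\, g(\x,k)\,\dd t\,\hat\Lambda(\dd\x,\dd k)\right).
\end{align*}
Finally, change measure back: since the integrand on the right is now $\mathcal F_{t-}$-predictable (in particular $\mathcal F_{t'}$-measurable after integrating), $\hat\E(V D_{t-}\,(\cdots))$ over $\dd t$ equals $\E_{\hat\Q^b}(V\,(\cdots)/D_{\text{at }t}\cdot D_{t-})$... more precisely I would absorb $D_{t-}$: $\hat\E(\int_{(s,t']}D_{t-}H_t\,\dd t) = \int_{(s,t']}\hat\E(D_{t-}H_t)\,\dd t = \int_{(s,t']}\hat\E(D_{t'}H_t)\,\dd t = \int_{(s,t']}\E_{\hat\Q^b}(H_t)\,\dd t$ when $H_t$ is $\mathcal F_{t-}$-measurable, using the martingale property of $D$ and Fubini. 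With $H_t = V\int_{\mathcal P\times\N} g(\x,k)\frac{R(b+\hat\xi_{t-}+x_k)}{R(b+\hat\xi_{t-})}\ind{b+\hat\xi_{t-}+x_k>0}\,\hat\Lambda(\dd\x,\dd k)$ this is exactly the claimed right-hand side, and the monotone class theorem extends it to all non-negative predictable $W$.

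\textbf{Where the difficulty lies.} The only delicate point is the interchange $\hat\E(D_{t-}H_t)=\hat\E(D_{t'}H_t)$ and the justification that the extra trajectory-after-the-atom factor in $D_{t'}$ integrates to $1$; this rests on $D$ being a genuine $\hat\P$-martingale (which follows from \eqref{eqn:formulaRenewal}) together with a careful application of the compensation/master formula for Poisson random measures with a predictable integrand that is itself built from the driving measure up to time $t-$. All boundedness needed (finiteness of the integrals, applicability of Fubini, legitimacy of the change of measure) follows from \eqref{eqn:levyEve}, \eqref{eqn:expIntegrability}, the estimates \eqref{eqn:estimatesRenewal} on $R$, and a standard localization/truncation reducing to bounded $V$ and $g$ supported on $\{|x_k|\ge\delta\}$ or on a bounded region, then monotone convergence. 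Once this bookkeeping is in place the identity is immediate, so I expect the write-up to be short, with the main care being notation for "the part of $D_{t'}$ after the atom at time $t$''.
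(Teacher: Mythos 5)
Your proposal is correct in substance, but it follows a genuinely different route from the paper. The paper's proof is very short: it writes the density process $Z_t=\frac{R(b+\hat{\xi}_t)}{R(b)}\ind{\inf_{s\le t}\hat{\xi}_s>-b}$, observes the pointwise factorization at each atom $(t,\x,k)$ of $\hat{N}$, namely $Z_t=Z_{t-}\,\ind{b+\hat{\xi}_{t-}+x_k>0}\frac{R(b+\hat{\xi}_{t-}+x_k)}{R(b+\hat{\xi}_{t-})}$, notes that this multiplier is predictable, and then simply cites Girsanov's theorem for random measures (Jacod--Shiryaev, Theorem III.3.17(b)) to identify the $\hat{\Q}^b$-compensator of $\hat{N}$ as the $\hat{\P}$-intensity times that multiplier. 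You instead re-derive this Girsanov statement by hand: reduction to elementary predictable integrands by monotone class, change of measure to $\hat{\P}$, an atom-by-atom expansion of $D_{t'}$, and the martingale property of the density (which is \eqref{eqn:formulaRenewal}) to integrate out the post-atom contribution, then a change of measure back using $\hat{\E}(D_{t'}H_t)=\hat{\E}(D_{t-}H_t)$ for $\mathcal{F}_{t-}$-measurable $H_t$. What the paper's route buys is brevity and a clean delegation of the hard step to a standard reference; what yours buys is a self-contained argument that makes the mechanism (density factorizes at the atom, future factor has conditional mean one) explicit.

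One caution on the pivotal step of your argument: the ``master formula'' you invoke to pass $D_{t'}$ inside the sum over atoms is \emph{not} the ordinary compensation formula, since $D_{t'}$ is anticipating relative to an atom at time $t\in(s,t']$ (it depends on the atom itself and on the trajectory after $t$). To make this rigorous you need either the Mecke--Palm equation for the Poisson random measure $\hat{N}$ under $\hat{\P}$ (evaluate the functional on $\hat{N}+\delta_{(t,\x,k)}$ and integrate against $\dd t\otimes\hat{\Lambda}$), combined with the observation that after inserting the atom the post-$t$ density ratio has conditional mean one by \eqref{eqn:formulaRenewal} applied at the shifted starting point $b+\hat{\xi}_{t-}+x_k$ (not literally ``the martingale property of $D$'' for the original trajectory), or an optional projection/conditioning argument at each atom. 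You flag this as the delicate point, and your sketch of the fix is the right one, but as written it is precisely the content of the theorem the paper cites, so in a full write-up this step would need to be carried out carefully or replaced by the citation.
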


\begin{proof}
	Write $Z_t:= \frac{R(\hat{\xi}_t + b)}{R(b)} \ind{\inf_{s \leq t} \!\hat{\xi}_s > -b}$, which is the density process for the change of measure in Lemma~\ref{lem:spinalDerivative}.  
	It is well-known the predictable compensator (see e.g.\ \cite[Section II.1]{JaS03}) of 
	the Poisson random measure of $\hat{N}$ under $\hat{\P}$ is given by its intensity $\dd t \otimes \hat{\Lambda}(\dd \x,\dd k)$.  Since we have that, for any non-negative predictable function $W$ on $\R_+\times \mathcal{P} \times \N$, 
	\begin{align*}
		&	\hat{\E} \left(\int_{\R_+\times \mathcal{P}\times \N} Z_t W(t,\x ,k)  \hat{{N}}(\dd t, \dd \x, \dd k)\right)\\
		&= \hat{\E} \left(\int_{\R_+\times \mathcal{P}\times \N}  Z_{t-}\ind{b+\hat{\xi}_{t-}+x_k >0} \frac{R(b+ \hat{\xi}_{t-}+x_k)}{R(b+\hat{\xi}_{t-})} W(t,\x ,k) \hat{{N}}(\dd t, \dd \x, \dd k)\right), 
	\end{align*}
and that $(t,\x,k)\mapsto \ind{b+\hat{\xi}_{t-}+x_k >0} \frac{R(b+ \hat{\xi}_{t-}+x_k)}{R(b+\hat{\xi}_{t-})} $ is predictable, 
 the result follows from Girsanov's theorem for random measures (see e.g.\ \cite[Theorem III.3.17(b)]{JaS03}).
\end{proof}

\section{Proof of the main result}
\label{sec:proof}

Using Fact~\ref{fct}, Lemma~\ref{lem:spinalDerivative} and Corollary~\ref{cor:ncandsc}, we now prove Theorem~\ref{thm:main}. We begin by giving alternative expressions of the condition \eqref{H1} that will help us use Proposition~\ref{prop:finiteness}. We then show that \eqref{H1} implies the non-degeneracy of the limit of the derivative martingale and  finally that the derivative martingale converges to $0$ if \eqref{H1} does not hold.

\subsection{Equivalent integral tests for \texorpdfstring{\eqref{H1}}{H1}}

We give here some equivalent formulas for \eqref{H1}. Recall that, for all $\x \in \mathcal{P}$,
$
Y(\x) = \sum_{k \geq 1} e^{-x_k}$ and $\tilde{Y}(\x) = \sum_{k \geq 1} x_k \ind{x_k \geq 0} e^{-x_k}.
$
Introduce the quantity
\[
\bar{Y}(\x) = \sum_{k \geq 1} ( 1 + x_k \ind{x_k \geq 0} ) e^{-x_k} = Y(\x) + \tilde{Y}(\x).
\]
We first rewrite \eqref{H1} in terms of the variable $\bar{Y}(\x)$.

\begin{lemma}
	\label{lem:H1bis}
	Under \eqref{eqn:H0} and \eqref{eqn:var}, \eqref{H1} is equivalent to
	\[
	\int_{\calP} Y(\x) \log_+ (\bar{Y}(\x) - 1)^2 \Lambda(\dd \x)< \infty.
	\]
\end{lemma}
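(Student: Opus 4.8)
The claim is that under the boundary assumptions, the condition
\[
\int_{\calP} \left( Y(\x) \log_+(Y(\x)-1)^2 + \tilde{Y}(\x) \log_+(\tilde{Y}(\x) - 1) \right) \Lambda(\dd \x) < \infty
\]
is equivalent to
\[
\int_{\calP} Y(\x) \log_+ (\bar{Y}(\x) - 1)^2 \Lambda(\dd \x)< \infty,
\]
where $\bar{Y}(\x) = Y(\x) + \tilde{Y}(\x)$. The plan is to compare the two integrands pointwise (up to absolute constants), treating separately the regions $\{\tilde{Y}(\x) \leq Y(\x)\}$ and $\{\tilde{Y}(\x) > Y(\x)\}$.

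\emph{Step 1: the easy direction and the region where $\tilde Y$ dominates.} First I would note that, since $\bar Y - 1 \geq Y - 1$ whenever $Y \geq 1$ (and $Y \geq 1$ is automatic once $\bar Y - 1$ is large), we have $\log_+(Y(\x)-1)^2 \leq \log_+(\bar Y(\x)-1)^2$, so the first term of \eqref{H1} is dominated by the integrand of the displayed condition. For the second term, observe that on $\{\tilde Y \leq Y\}$ we have $\bar Y \leq 2Y$, hence (for $\bar Y - 1$ bounded away from $0$, the only relevant range) $\log_+(\bar Y - 1) \leq \log_+(Y-1) + \log 2 + C \lesssim \log_+(Y - 1) + 1$, so $Y \log_+(\bar Y - 1)^2 \lesssim Y\log_+(Y-1)^2 + Y$, and $\int Y(\x) \Lambda(\dd\x) < \infty$ follows from \eqref{eqn:varbis} (indeed $Y(\x) \leq 1 + \sum_i x_i^2 e^{-x_i}$ on a suitable range, or more simply $\int (Y(\x)-1)\,\Lambda(\dd \x)$ is controlled using $\kappa(1)=\kappa'(1)=0$ together with the variance condition). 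On $\{\tilde Y > Y\}$ one has $\bar Y < 2\tilde Y$, and also $Y \leq \tilde Y \cdot(\text{const})$ fails in general — but here I would instead bound $Y \log_+(\bar Y - 1)^2$ using the elementary inequality $\tilde Y(\x) \geq c\, Y(\x)\log_+(Y(\x))$ on this region (since large $\tilde Y$ forces either many small-index terms, controlled, or large $x_k$'s which contribute to $\tilde Y$ more than to $Y$); more carefully, on $\{\tilde Y > Y\}$, $Y\log_+(\bar Y - 1)^2 \lesssim Y \log_+(\tilde Y)^2 + Y \lesssim \tilde Y \log_+(\tilde Y) + Y$ using $Y \log_+(\tilde Y) \lesssim \tilde Y$ (which needs justification but follows from $Y \leq $ number of relevant terms and Jensen-type estimates). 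Combined with $\int Y \Lambda < \infty$ this gives the displayed condition from \eqref{H1}.

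\emph{Step 2: the converse.} Conversely, assume $\int Y(\x)\log_+(\bar Y(\x)-1)^2 \Lambda(\dd\x) < \infty$. Since $Y \geq 1$ always and $\bar Y \geq Y$, one gets $Y\log_+(Y-1)^2 \leq Y\log_+(\bar Y-1)^2$, so the first term of \eqref{H1} is finite. For the second term, on $\{\tilde Y \leq Y\}$ we have $\tilde Y \log_+(\tilde Y - 1) \leq Y\log_+(Y-1) \leq Y\log_+(\bar Y - 1)^2 + Y$, integrable. On $\{\tilde Y > Y\}$, $\bar Y \leq 2\tilde Y$ and $\bar Y \geq \tilde Y$, so $\log_+(\bar Y - 1) \asymp \log_+(\tilde Y)$; we need $\tilde Y \log_+(\tilde Y) \lesssim Y \log_+(\tilde Y)^2 + (\text{integrable})$. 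If $Y \geq \log_+(\tilde Y)$ this is immediate; the remaining subregion $\{Y < \log_+(\tilde Y)\}$ requires the key observation that $\tilde Y$ cannot be much larger than $Y\log_+ Y$-type quantities forces $\tilde Y \log_+ \tilde Y$ to be controlled directly — here I would use that $\tilde Y(\x) = \sum_i x_i \ind{x_i\geq 0} e^{-x_i} \leq \sum_i x_i^2 e^{-x_i} =: V(\x)$ times nothing useful, OR better, use $\tilde Y \leq C\, Y\log_+(1/p_{\min})$-type bound, ultimately reducing to showing $\int_{\{\tilde Y > Y, Y < \log_+\tilde Y\}} \tilde Y \log_+\tilde Y \,\Lambda(\dd\x) < \infty$ as a consequence of the second-moment condition \eqref{eqn:varbis}. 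This is the main obstacle: getting a clean pointwise handle on $\tilde Y$ versus $Y$ when $\tilde Y \gg Y$. I expect the resolution to use that a single large $x_k \geq 0$ contributes $x_k e^{-x_k}$ to $\tilde Y$ and $x_k^2 e^{-x_k}$ to $V$, while contributing $e^{-x_k} \leq 1$ to $Y$; so the "bad" region is where $\tilde Y$ is built from moderately many moderately large coordinates, and there one bounds $\tilde Y \log_+\tilde Y$ against $V$ up to logarithmic factors that are absorbed, concluding via $\int V(\x)\Lambda(\dd\x) < \infty$.

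\emph{Step 3: assembling.} Finally I would combine the two directions, noting throughout that all comparisons are only needed on $\{\bar Y - 1 \geq e\}$ (elsewhere all $\log_+$ terms vanish and the integrands are bounded by $C\,Y$, integrable), and that the finiteness of $\int_{\calP}(\,Y(\x)-1 + V(\x)\,)\Lambda(\dd\x)$ — where $V(\x) = \sum_i x_i^2 e^{-x_i}$ — is guaranteed by \eqref{eqn:H0bis}, \eqref{eqn:varbis} and \eqref{eqn:levyEve}. The logarithmic bookkeeping (turning $\log_+(ab)$ into $\log_+ a + \log_+ b + O(1)$ and squaring) is routine once the regions are fixed.
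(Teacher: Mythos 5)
Your proposal has genuine gaps, and they sit exactly where the lemma is nontrivial: the equivalence is \emph{not} a pointwise comparison of integrands, and several of your pointwise claims are false. In Step 1, on $\{\tilde{Y}>Y\}$ the inequality $Y\log_+\tilde{Y}\lesssim\tilde{Y}$ fails: take $e^{a^2}$ particles at height $a$, so that $Y=e^{a^2-a}$, $\tilde{Y}=aY$, while $Y\log_+\tilde{Y}\approx a^2Y\gg\tilde{Y}$. In Step 2, ``$Y\ge 1$ always'' is false, the case $Y\ge\log_+\tilde{Y}$ is not ``immediate'' (with $Y=\log\tilde{Y}$ you would need $\tilde{Y}\log\tilde{Y}\lesssim\log^3\tilde{Y}$ up to an unspecified integrable term), and on the remaining region you explicitly leave ``the main obstacle'' unresolved, offering only a heuristic that $\tilde{Y}\log_+\tilde{Y}$ should be dominated by $\sum_i x_i^2e^{-x_i}$. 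That unproved step is precisely the hard direction of the lemma. Finally, the integrability inputs you invoke are wrong as stated: $\int_\calP Y(\x)\Lambda(\dd\x)=\infty$ in general since $\Lambda$ is an infinite measure, and $\int_\calP (Y(\x)-1)\Lambda(\dd\x)$ need not converge absolutely because $Y-1\approx -x_1$ for configurations with a single small displacement (only $x_1^2\wedge 1$ is $\Lambda$-integrable). What is true, and what is actually needed, is $\int\bar{Y}(\x)\ind{\bar{Y}(\x)\ge 2}\Lambda(\dd\x)<\infty$, obtained from \eqref{eqn:varbis} applied to the terms of index $k\ge 2$.

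The paper proceeds differently: it shows $\Lambda(\bar{Y}\ge 2)<\infty$ together with the integrability of $\bar{Y}$ on that event, notes that both integrands vanish elsewhere, and then, after normalizing $\Lambda$ to a probability measure, invokes the known equivalences for point processes in the boundary case (Lemma B.1 of Aïdékon for the reverse part and Lemma A.1 of Mallein for the direct part), which encapsulate exactly the non-pointwise step you are missing. If you want a self-contained argument, your ``main obstacle'' can in fact be closed by Cauchy--Schwarz: $\tilde{Y}(\x)^2\le Y(\x)\sum_i x_i^2e^{-x_i}\ind{x_i\ge 0}$, hence $\tilde{Y}\log_+\tilde{Y}\le\tfrac{1}{2}\sum_i x_i^2e^{-x_i}+\tfrac{1}{2}Y\log_+^2\tilde{Y}$, where on $\{\tilde{Y}\ge 2\}$ the last term is bounded by $2Y\log_+^2(\bar{Y}-1)+CY$ and the first is $\Lambda$-integrable by \eqref{eqn:varbis}; a similarly careful split (according to whether $Y\ge\sqrt{\tilde{Y}}$) repairs Step 1. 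As written, however, the proposal does not prove the lemma.
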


\begin{proof}
	Let $B = \{ \x \in \mathcal{P} : \bar{Y}(\x)\! \geq\! 2\}$. 
	Since $\{\x \in \mathcal{P} :Y(\x)\!\ge\! 2 \text{ or } \tilde{Y}(\x)\!\ge\! 2\}\subset B$, we have
	\begin{align*}
	0 & = \int_{B^c} Y(\x) \log_+(Y(\x)-1)^2 \Lambda(\dd \x) = \int_{B^c} \tilde{Y}(\x) \log_+(\tilde{Y}(\x) - 1) \Lambda(\dd \x).
	\end{align*}
	So it suffices to prove that
	\begin{align*}
	&\int_B Y(\x) \log_+ (\bar{Y}(\x) - 1)^2 \Lambda(\dd \x)< \infty \\
	 \iff
	&\int_{B} Y(\x) \log_+(Y(\x)-1)^2 + \tilde{Y}(\x) \log_+(\tilde{Y}(\x) - 1) \Lambda(\dd \x) < \infty.
	\end{align*}
	Additionally, we have 
	\begin{equation}\label{eqn:Ybar>2}
	\Lambda(B) = \Lambda \left(\x \in \mathcal{P} : \bar{Y}(\x)\! \geq\! 2\right) <\infty. 
	\end{equation}
	Indeed, as $\sup_{x \in \R} x e^{-x} \ind{x \geq 0} \le e^{-1}$, we deduce that 
	$\bar{Y}(\x) \le \bar{Y}_2(\x)+ 1+e^{-1}$ for all $\x \in \mathcal{P}$, where we have set $\bar{Y}_2(\x):= \sum_{k \geq 2} ( 1 + x_k \ind{x_k \geq 0} ) e^{-x_k}$. 
	So we have 
	$\{\bar{Y}(\x)\! \geq\! 2\}\subset \left\{\bar{Y}_2(\x)\ge 1 - e^{-1}\right\}$.
	We also note that 
	\begin{equation}\label{eqn:Ybar>2-bis}
	\int_{\mathcal{P}} \bar{Y}_2(\x) \Lambda (\dd \x) 
	\le  \int_{\mathcal{P}} \sum_{k \geq 2} ( 2 + x^2_k ) e^{-x_k} \Lambda (\dd \x) <\infty,  
	\end{equation}
	where the finiteness of the last integral follows from 
	\eqref{eqn:varbis}. 
	As a result, we have 
	\begin{align}
	\int_{\mathcal{P}} \bar{Y}(\x)\ind{\bar{Y}(\x) \geq 2} \Lambda (\dd \x)
	&\le \int_{\mathcal{P}} (\bar{Y}_2(\x)+ 1+e^{-1})\ind{\bar{Y}_2(\x)\ge 1 - e^{-1}} \Lambda (\dd \x)  \nonumber\\
	&\le C \int_{\mathcal{P}}\bar{Y}_2(\x) \Lambda (\dd \x)<\infty. \label{eqn:Ybar>2-E}
	\end{align}
	This implies \eqref{eqn:Ybar>2}. 	
	Therefore, we may assume, without loss of generality, that $\Lambda$ is a probability distribution on $\mathcal{P}$. But in that case, the equivalence is a direct consequence of \cite[Lemma B.1]{Aid13} (for the reverse part) and \cite[Lemma A.1]{Mal} (for the direct part).
\end{proof}

Using Lemma~\ref{lem:H1bis},  we also write \eqref{H1} as the following integral test, which will be used later on to apply Proposition~\ref{prop:finiteness}.
\begin{lemma}
	\label{lem:integralTest}
	For all $r \geq 0$, we set ${P}(r) = \left\{ \x \in \mathcal{P} : \bar{Y}(\x) \leq e^{r/3}+1\right\}$. We have
	\begin{equation*}
	\eqref{H1} \quad \Longrightarrow \quad \int_0^\infty \int_{{P}(r)^c} r Y(\x) + \tilde{Y}(\x)~ \Lambda(\dd\x) \dd r < \infty.
	\end{equation*}
\end{lemma}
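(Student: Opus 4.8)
The plan is to apply Tonelli's theorem to exchange the two integrations and to evaluate the inner integral over $r$ in closed form. Fix $\x\in\calP$. By definition of ${P}(r)$, one has $\x\notin {P}(r)$ exactly when $e^{r/3}<\bar{Y}(\x)-1$; such an $r\ge 0$ exists if and only if $\bar{Y}(\x)>2$, and then the set of admissible $r$ is the interval $[0,3\log_+(\bar{Y}(\x)-1))$ (with the convention $\log_+(\bar{Y}(\x)-1)=0$ whenever $\bar{Y}(\x)\le 2$). Integrating $r\mapsto rY(\x)+\tilde{Y}(\x)$ over this interval and then invoking Tonelli gives
\begin{multline*}
\int_0^\infty\int_{{P}(r)^c}\big(rY(\x)+\tilde{Y}(\x)\big)\,\Lambda(\dd\x)\,\dd r\\
=\frac{9}{2}\int_{\calP}Y(\x)\log_+(\bar{Y}(\x)-1)^2\,\Lambda(\dd\x)+3\int_{\calP}\tilde{Y}(\x)\log_+(\bar{Y}(\x)-1)\,\Lambda(\dd\x).
\end{multline*}
So the claim reduces to showing that both integrals on the right are finite under \eqref{H1}.

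The first integral is precisely the quantity that Lemma~\ref{lem:H1bis} shows to be equivalent to \eqref{H1}, hence finite. For the second integral, I would first observe that its integrand vanishes on $\{\x:\bar{Y}(\x)<2\}$ (there $\log_+(\bar{Y}(\x)-1)=0$), so only the set $B=\{\x:\bar{Y}(\x)\ge 2\}$ matters, and on $B$ the a priori bound $\int_B\bar{Y}(\x)\,\Lambda(\dd\x)<\infty$ from \eqref{eqn:Ybar>2-E} is available. I would then split $B$ according to the sign of $\tilde{Y}-Y$. On $\{\x\in B:\tilde{Y}(\x)<Y(\x)\}$, using $\tilde{Y}\le Y$ and the elementary bound $s\le s^2+1$, one gets $\tilde{Y}\log_+(\bar{Y}-1)\le Y\log_+(\bar{Y}-1)^2+Y$, which is $\Lambda$-integrable by Lemma~\ref{lem:H1bis} and \eqref{eqn:Ybar>2-E}. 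On $\{\x\in B:\tilde{Y}(\x)\ge Y(\x)\}$ one has $\bar{Y}\le 2\tilde{Y}$ and $\tilde{Y}\ge 1$, hence $\log_+(\bar{Y}-1)\le\log(2\tilde{Y})\le 2\log 2+\log_+(\tilde{Y}-1)$, so $\tilde{Y}\log_+(\bar{Y}-1)\le 2(\log 2)\tilde{Y}+\tilde{Y}\log_+(\tilde{Y}-1)$, which is $\Lambda$-integrable over $B$ by \eqref{eqn:Ybar>2-E} (as $\tilde{Y}\le\bar{Y}$) and by \eqref{H1}. This would finish the proof.

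The only step that is not purely mechanical is the finiteness of $\int_{\calP}\tilde{Y}\log_+(\bar{Y}-1)\,\Lambda(\dd\x)$: since $\bar{Y}=Y+\tilde{Y}$ dominates both $Y$ and $\tilde{Y}$, one cannot dominate $\tilde{Y}\log_+(\bar{Y}-1)$ by a single term of \eqref{H1}, and the dichotomy above — together with the fact that the linear remainders $Y$ and $\tilde Y$ are absorbed by the finite mass $\int_B\bar{Y}\,\Lambda<\infty$ coming from the proof of Lemma~\ref{lem:H1bis} — is what I expect to be the crux of the argument.
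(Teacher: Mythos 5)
Your proof is correct. The Tonelli computation reducing the double integral to
$\tfrac{9}{2}\int Y\log_+(\bar{Y}-1)^2\,\dd\Lambda+3\int\tilde{Y}\log_+(\bar{Y}-1)\,\dd\Lambda$ and the use of Lemma~\ref{lem:H1bis} for the first term coincide with the paper. Where you diverge is the cross term $\int_{\calP}\tilde{Y}(\x)\log_+(\bar{Y}(\x)-1)\,\Lambda(\dd\x)$: the paper disposes of it by the same reduction as in Lemma~\ref{lem:H1bis} (restricting to $\{\bar{Y}\ge 2\}$, which has finite $\Lambda$-mass, and normalizing $\Lambda$ to a probability measure) and then citing \cite[Lemma B.1]{Aid13}, whereas you give a self-contained elementary bound via the dichotomy $\tilde{Y}<Y$ versus $\tilde{Y}\ge Y$: on the first set $\tilde{Y}\log_+(\bar{Y}-1)\le Y\log_+(\bar{Y}-1)^2+Y$, controlled by Lemma~\ref{lem:H1bis} and \eqref{eqn:Ybar>2-E}; on the second, $\bar{Y}\le 2\tilde{Y}$ and $\tilde{Y}\ge 1$ give $\tilde{Y}\log_+(\bar{Y}-1)\le 2(\log 2)\tilde{Y}+\tilde{Y}\log_+(\tilde{Y}-1)$, controlled by \eqref{eqn:Ybar>2-E} and \eqref{H1} directly. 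Both routes are valid; yours avoids the external technical lemma and the probability-measure normalization, making the step self-contained and transparent, while the paper's version is shorter and consistent with how Lemma~\ref{lem:H1bis} itself is proved (also by appeal to the lemmas of Aïdékon and Mallein).
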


\begin{proof}
	We observe that, by the Fubini-Tonelli theorem
	\begin{align*}
	&\int_0^\infty \int_{{P}(r)^c} r Y(\x) + \tilde{Y}(\x) \Lambda(\dd\x) \dd r\\
	= &\int_{\mathcal{P}} \int_0^{3 \log_+( \bar{Y}(\x) - 1)} r Y(\x) + \tilde{Y}(\x) \dd r \Lambda(\dd \x)\\
	= &\tfrac{9}{2}\int_\mathcal{P} Y(\x) \log_+(\bar{Y}(\x) - 1)^2 \Lambda(\dd \x) + 3 \int_\mathcal{P} \tilde{Y}(\x) \log_+(\bar{Y}(\x)-1) \Lambda(\dd \x).
	\end{align*}
	By Lemma~\ref{lem:H1bis}, the finiteness of the first integral is equivalent to \eqref{H1}.

	It remains to prove that the second term is finite under \eqref{H1}.  By the same arguments as in the proof of Lemma~\ref{lem:H1bis}, we may assume that $\Lambda$ is a probability measure. 
	Then the finiteness of the second term follows from \cite[Lemma B.1]{Aid13}.  
\end{proof}

\subsection{The sufficient part}
\label{sec:sufficient}

Let $X$ be a branching Lévy process satisfying \eqref{eqn:H0} and \eqref{eqn:var} and $Z^b$  its truncated derivative martingale defined in \eqref{eqn:defTruncatedDerivativeMartingale}. In this section, we assume that \eqref{H1} holds and prove that $Z^b$ is uniformly integrable for all $b > 0$, which by Corollary~\ref{cor:ncandsc} is enough to deduce the sufficient part of Theorem~\ref{thm:main}.

\begin{lemma}
	\label{lem:ZaUI}
	Under assumption \eqref{H1}, $Z^b$ is uniformly integrable for all $b > 0$.
\end{lemma}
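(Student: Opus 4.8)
\textbf{Proof plan for Lemma~\ref{lem:ZaUI}.}
The plan is to use Fact~\ref{fct} together with the spinal decomposition of Lemma~\ref{lem:spinalDerivative}: since $\Q^b = \frac{Z^b}{R(b)}\cdot\P$, the martingale $Z^b$ is uniformly integrable if and only if $\liminf_{t\to\infty} Z^b_t < \infty$ holds $\Q^b$-a.s., equivalently $\hat{\Q}^b$-a.s.\ in the spinal picture. So the goal becomes: under \eqref{H1}, show that the truncated derivative martingale of the branching Lévy process with spine stays finite along a subsequence, $\hat\Q^b$-almost surely. Following the classical Lyons-type argument, I would decompose $Z^b_t$ (viewed under $\hat\Q^b$) according to the genealogy: the contribution of the spine vertex $w_t$ itself, plus the contributions of the subtrees hanging off the spine, each of which is (conditionally) an independent copy of the untruncated martingale $Z$ started from the birth position of that subtree, and the untruncated $Z$ is a.s.\ finite (Lemma~\ref{lem:asymptotTruncated}).

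The first step is to set up notation for the spine: write $\hat\xi_t + b =: \eta_t$ for the spine position (shifted so that under $\hat\Q^b$, $\eta$ is the Lévy process $\xi$ conditioned to stay positive, started from $b$, by \eqref{eqn:SpineTruncated}), and enumerate the branching events along the spine using the Poisson random measure $\hat N$. At an atom $(t,\x,k)$ of $\hat N$, the spine sits at $\eta_{t-}$ and emits, for each $j\neq k$, a fresh particle at height $\eta_{t-}+x_j$ which starts an independent copy of $\P$. By the branching property and Lemma~\ref{lem:asymptotTruncated}, the total contribution of that subtree to $\lim_t Z^b_t$, conditioned on its starting height $h = \eta_{t-}+x_j$, equals $e^{-h}$ times a random variable of finite mean $c_\star^{-1}R(h)$ (up to the boundary truncation, which only decreases it); crucially its conditional mean is $\le c_\star^{-1}R(h)e^{-h}$, and the contribution vanishes if $h \le 0$. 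Summing over all spine-descendant subtrees and using Lemma~\ref{lem:2} to compute the $\hat\Q^b$-compensator of $\hat N$, the expected total contribution of the off-spine subtrees is
\[
\E_{\hat\Q^b}\!\left( \int_{\R_+\times\mathcal{P}\times\N} \sum_{j\ne k} R(\eta_{t-}+x_j)e^{-\eta_{t-}-x_j}\ind{\eta_{t-}+x_j>0}\,\frac{R(\eta_{t-}+x_k)}{R(\eta_{t-})}\,\dd t\otimes\hat\Lambda(\dd\x,\dd k)\right).
\]
Using $R(y)\le c_2(y+1)$, the renewal identity $\sum_k e^{-x_k}$ appearing in $\hat\Lambda$, and the formula for $\hat\Lambda$ from \eqref{eqn:hatLambdaDef}, this bound is controlled by an integral of the form $\int_0^\infty g(\eta_{t-})\,\dd t$ where $g(y)$ is, up to constants, $y$ times $\Lambda$-averages of $\bar Y(\x)$-type quantities truncated to the region $\bar Y(\x)$ large relative to $e^{y/3}$ — this is exactly where the set $P(r)^c$ of Lemma~\ref{lem:integralTest} enters. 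Splitting the $\x$-integral at the threshold $\bar Y(\x)\le e^{\eta_{t-}/3}+1$: on $P(\eta_{t-})$ the renewal factor $R(\eta_{t-}+x_j)e^{-x_j}$ is summable and gives a uniformly bounded (in the height) contribution handled by transience of $\eta$ and \eqref{eqn:finite<C}-type estimates, while on $P(\eta_{t-})^c$ the contribution is bounded by $\eta_{t-}\,Y(\x)+\tilde Y(\x)$ integrated against $\Lambda$, i.e.\ exactly the integrand $rY(\x)+\tilde Y(\x)$ on $P(r)^c$ from Lemma~\ref{lem:integralTest}. Taking $\hat\Q^b$-expectation and using that $\eta$ under $\hat\Q^b$ is the Lévy process conditioned to stay positive, Proposition~\ref{prop:finiteness} applied to the function $r\mapsto \int_{P(r)^c}(Y(\x)+\tilde Y(\x)/r)\,\Lambda(\dd\x)$ (which is eventually non-increasing, bounded, and has finite $\int_0^\infty r f(r)\,\dd r$ precisely by Lemma~\ref{lem:integralTest} under \eqref{H1}) shows the perpetual integral $\int_0^\infty g(\eta_{t-})\,\dd t$ is finite $\hat\Q^b$-a.s. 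One must also add the spine's own contribution $R(\eta_t)e^{-\eta_t}\to 0$ since $\eta_t\to\infty$, and check the boundary-truncation indicators only help. Hence $\lim_t Z^b_t<\infty$ $\hat\Q^b$-a.s.; by Fact~\ref{fct}, $Z^b$ is uniformly integrable.

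The main obstacle, as usual in these spine arguments, is the bookkeeping in the second step: one cannot directly take expectations of the limiting subtree contributions because the relevant sum of conditional means need only be finite $\hat\Q^b$-almost surely, not in $\hat\Q^b$-expectation. The standard fix is to work with a further one-parameter truncation — replace $Z^b_t$ by $Z^b_t\ind{\inf_{s\le t}\eta_s\ge \text{const}}$ or equivalently stop the spine before it dips too low, bound the stopped martingale's $\hat\Q^b$-expectation by the displayed perpetual integral, show that bound is finite, let the truncation parameter go to infinity using transience of $\eta$ (so that $\P_b^\uparrow(\inf\eta \le \cdot)\to 0$), and conclude $\liminf_t Z^b_t<\infty$ $\hat\Q^b$-a.s.\ by a Borel–Cantelli or monotone-limit argument. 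The other delicate point is verifying that the function produced for Proposition~\ref{prop:finiteness} genuinely satisfies its hypotheses (bounded, eventually non-increasing): boundedness and the eventual monotonicity both follow from $\bar Y \in L^1(\Lambda)$ on $\{\bar Y\ge 2\}$, which is exactly \eqref{eqn:Ybar>2}/\eqref{eqn:Ybar>2-E} established inside the proof of Lemma~\ref{lem:H1bis}, so this is a routine but necessary check rather than a real difficulty.
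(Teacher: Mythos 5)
Your outline reproduces the paper's architecture: bias by $Z^b$, pass to $\hat{\Q}^b$ via Lemma~\ref{lem:spinalDerivative}, reduce by Fact~\ref{fct} to showing $\liminf_t Z^b_t<\infty$ $\hat{\Q}^b$-a.s., condition on the spine data, compute compensators with Lemma~\ref{lem:2}, split the branching events along the spine at the threshold $\bar{Y}(\x)\leq e^{(\hat\xi_{s-}+b)/3}+1$, and invoke Lemma~\ref{lem:integralTest} together with Lemma~\ref{lem:integral}/Proposition~\ref{prop:finiteness}. However, your treatment of the ``big decoration'' region $P(\hat\xi_{s-}+b)^c$ has a genuine gap. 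The quantity to control there is the sum of the subtrees' conditional means, namely $\sum_{j\neq k}R(b+\hat\xi_{s-}+x_j)e^{-(\hat\xi_{s-}+x_j)}$ summed over the atoms of $\hat N$ in that region; by Lemma~\ref{lem:2} and \eqref{eqn:hatLambdaDef} its $\hat{\Q}^b$-compensator density at spine height $y$ is of order
\[
\frac{e^{-y}}{R(y)}\int_{P(y)^c}\sum_{k\geq 1}\sum_{j\neq k}R(y+x_j)R(y+x_k)e^{-x_j-x_k}\,\Lambda(\dd\x)
\;\asymp\;\frac{(1+y)^2e^{-y}}{R(y)}\int_{P(y)^c}\bar{Y}(\x)\,\bar{Y}_2(\x)\,\Lambda(\dd\x),
\]
which carries a \emph{product} of two $\bar{Y}$-type factors. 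Your claimed bound by $\int_{P(y)^c}\big(yY(\x)+\tilde{Y}(\x)\big)\Lambda(\dd\x)$ silently discards the factor $\sum_{j\neq k}R(y+x_j)e^{-y-x_j}\asymp(1+y)e^{-y}\bar{Y}(\x)$, which is unbounded on $P(y)^c$ (there $\bar{Y}(\x)>e^{y/3}$ with no upper bound), and the correct, quadratic-in-$\bar{Y}$ integrand is not integrable under \eqref{H1}. The function $r\mapsto\frac{1}{r}\int_{P(r)^c}(rY(\x)+\tilde{Y}(\x))\Lambda(\dd\x)$ that you feed into Proposition~\ref{prop:finiteness} controls only the expected \emph{number} of such branching events, not their total mass.

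This is exactly the point the paper's proof is organized around: it treats the two regions asymmetrically, bounding the expectation of the mass $A^{(1)}$ on $P(\cdot)$ (where the cutoff $\bar{Y}\leq e^{y/3}+1$ makes $y e^{-y}h^{(1)}(y)$ integrable, so Lemma~\ref{lem:integral} applies), while on $P(\cdot)^c$ it only proves that the number $M^{(2)}$ of atoms is a.s.\ finite — its compensator is precisely your $\big(rY+\tilde Y\big)/R(r)$ quantity, with finite expectation by Lemmas~\ref{lem:integralTest} and~\ref{lem:integral} — and then observes that finitely many atoms, each contributing an a.s.\ finite amount, give $A^{(2)}<\infty$ a.s. Your proposed repair (stopping or truncating when the spine dips low) does not address the problem: under $\hat{\Q}^b$ the spine already stays above $-b$ and is transient, and the failure of integrability comes from heavy decorations $\bar{Y}(\x)$ at single branching events, not from low spine values. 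Once the mass bound on $P(\cdot)^c$ is replaced by this counting argument, the rest of your plan goes through and coincides with the paper's proof.
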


\begin{proof}
	By Fact~\ref{fct}, to prove that $(Z^b_t, t \geq 0)$ is uniformly integrable, it is enough to prove that $\Q^b( \liminf_{t \to \infty} Z^b_t < \infty) =1$. By Lemma~\ref{lem:spinalDerivative}, we can equally prove that $\hat{\Q}^b(\liminf_{t \to \infty} Z^b_t \!<\! \infty) \!=\!1$. We write $\mathcal{G} = \sigma(\hat{\xi},\hat{N})$ for the sigma-algebra generated by the spine $\hat{\xi}$ and the birth times and places of its children. 
	By the conditional Fatou lemma, we have 
\[
\E_{\hat{\Q}^b} \left( \liminf_{t \to \infty} Z^b_t \,\middle|\, \mathcal{G}\right) \leq \liminf_{t \to \infty} \E_{\hat{\Q}^b} \left( Z^b_t \,\middle|\, \mathcal{G}\right),\quad \hat{\Q}^b\text{-a.s.} . 
\]
	To conclude the proof, it is therefore enough to show that
	\begin{equation}
	\label{eqn:aimui}
 \liminf_{t \to \infty} \E_{\hat{\Q}^b} \left( Z^b_t \,\middle|\, \mathcal{G}\right) < \infty \quad \hat{\Q}^b\text{-a.s.}.
	\end{equation}

	By the spinal decomposition of $\hat{\Q}^b$ and Lemma~\ref{lem:truncatedDerivativeMartingale}, we have the following $\hat{\Q}^b$-a.s.\ identity:
	\begin{multline*}
	\E_{\hat{\Q}^b} \left( Z^b_t \,\middle|\,  \mathcal{G} \right) = R(b + \hat{\xi}_t) e^{-\hat{\xi}_t}
	+ \int_{[0,t]\times \calP \times \N } \sum_{i\ne k} R(b+ \hat{\xi}_{s-} + x_i)  e^{-  (\hat{\xi}_{s-} + x_i) }  \hat{N}(\dd s, \dd \x , \dd k).
	\end{multline*}
	As $\hat{\xi}$ under $\hat{\Q}^b$ is a centred Lévy process conditioned to stay above $-b$, 
	it is transient, i.e.\ $\lim_{t \to \infty} \hat{\xi}_t = \infty$ $\hat{\Q}^b$-a.s.. Therefore, letting $t \to \infty$, $\hat{\Q}^b$-a.s.\ we have:
	\begin{equation}
	\label{eqn:aimui1}
	\liminf_{t\to \infty} \E_{\hat{\Q}^b}\! \left( Z^b_t \,\middle|\,  \mathcal{G} \right) = \int_{[0,\infty) \!\times\! \calP \!\times\! \N } \sum_{i\ne k} R(b\!+\! \hat{\xi}_{s-} \!+\! x_i)  e^{-  (\hat{\xi}_{s-} \!+\! x_i) }  \hat{N}(\dd s, \dd \x , \dd k).
	\end{equation}
	We divide the above integral into two parts, depending on the relative values of $\x$ and $\hat{\xi}_{t-}$ at the atom $(t,\x,k)$ of $\hat{N}$. More precisely, recalling that
	\[
	\bar{Y}(\x) = \sum_{j \ge 1} \left(1 + x_j \ind{x_j>0}\right) e^{-x_j} \quad \text{and} \quad P(r) = \left\{ \x \in \mathcal{P} : \bar{Y}(\x) \leq e^{r/3}+1\right\},
	\]
	we set
	\begin{align*}
	A^{(1)} &:= \int_{[0,\infty)\times \calP \times \N } \ind{\x \in P(\hat{\xi}_{s-}+b)} \sum_{i\ne k} R(b+ \hat{\xi}_{s-} + x_i)  e^{-  (\hat{\xi}_{s-} + x_i) }  \hat{N}(\dd s, \dd \x , \dd k),\\
	A^{(2)} &:= \int_{[0,\infty)\times \calP \times \N } \ind{\x \not\in P(\hat{\xi}_{s-}+b)}
	\sum_{i\ne k} R(b+ \hat{\xi}_{s-} + x_i)  e^{-  (\hat{\xi}_{s-} + x_i) }  \hat{N}(\dd s, \dd \x , \dd k),
	\end{align*}
	so that \eqref{eqn:aimui1} can be rewritten as $\liminf_{t\to \infty} \E_{\hat{\Q}^b} \left( Z^b_t \,\middle|\,  \mathcal{G} \right) = A^{(1)} \!+\! A^{(2)}$ $\hat{\Q}^b$-a.s.. We now prove that $A^{(1)}$ and $A^{(2)}$ are both $\hat{\Q}^b$-a.s.\@ finite.
	
	An direct application of Lemma~\ref{lem:2} leads to 
	\[
	\E_{\hat{\Q}^b}\left( A^{(1)} \right) =  \int_{[0,\infty)} \E_{\hat{\Q}^b} \left( e^{-\hat{\xi}_s} \,h^{(1)} (\hat{\xi}_s\!+\!b) \right) \dd s,
	\]
	where we set
	\[
	h^{(1)}(y) = \frac{1}{R(y)}\int_{P(y)} \bigg( \sum_{k \geq 1} \sum_{j \neq k} R(y+x_k) R(y+x_j) e^{-x_k-x_j} \bigg) \Lambda(\dd \x).
	\]
	By \eqref{eqn:estimatesRenewal} and the subadditivity of $x \mapsto x \ind{x > 0}$, for all $x \in \R$ and $y \geq 0$ we have $R(x+y) \leq c_2 ( 1 + y )(1 + x\ind{x > 0})$. Therefore, 
	\begin{align*}
	h^{(1)}(y) &\leq C\frac{(1  +y)^2}{R(y)} \int_{P(y)}  \sum_{k \geq 1} \sum_{j \neq k} (1 + x_k\ind{x_k>0})(1 + x_j\ind{x_j>0}) e^{-x_k-x_j} \Lambda(\dd \x)\\
	&\leq C\frac{(1  +y)^2}{y} \int_{P(y)} \left( \left( 1 + x_1 \ind{x_1>0}\right)e^{-x_1} \bar{Y}_2(\x) + \bar{Y}_2(\x) \bar{Y}(\x)\right) \Lambda(\dd \x),
	\end{align*}
	where $\bar{Y}_{2}(\x) = \sum_{k\ge 2} \left(1 + x_k \ind{x_k>0} \right)e^{-x_k} $. As a result, we have
	\begin{align*}
	h^{(1)}(y) &\leq C \frac{(1  +y)^2}{y}\int_{P(y)} (\bar{Y}(\x)+C)\bar{Y}_2(\x) \Lambda(\dd \x)
	\leq C \frac{(1  +y)^2}{y}(e^{y/3}+1) \int_\mathcal{P} \bar{Y}_2(\x) \Lambda(\dd \x).
	\end{align*}
	Since $\int_\mathcal{P} \bar{Y}_2(\x) \Lambda(\dd \x)\!<\!\infty$ by \eqref{eqn:Ybar>2-bis}, we have $h^{(1)}(y) \leq C y^{-1}(1  +y)^2(e^{y/3}\!+\!1)$ and hence $y \mapsto y e^{-y} h^{(1)}(y)$ is integrable on $[0,\infty)$. 
 Combining this fact,  Lemma~\ref{lem:integral} and \eqref{eqn:SpineTruncated}, we conclude that
	\begin{align*}
	\E_{\hat{\Q}^b}(A^{(1)}) &\leq C \int_0^\infty \E_{\hat{\Q}^b} \left( e^{-\hat{\xi}_s} h^{(1)} (\hat{\xi}_s+b) \right) \dd s\leq C e^b \int_0^\infty \E^\uparrow_b \left(  e^{-\xi_s} h^{(1)} (\xi_s)  \right) \dd s < \infty,
	\end{align*}
	which implies that $A^{(1)} < \infty$ $\hat{\Q}^b$-a.s..

	We now turn to $A^{(2)}$. Observe that $A^{(2)}$ is the integral of a random point measure, whose total mass is given by
	\[
	M^{(2)} = \int_{[0,\infty)\times \calP \times \N } \ind{\x \not\in P(\hat{\xi}_{s-}+b)} \hat{N}(\dd s, \dd \x , \dd k).
	\]
	As for all $z \geq 0$, $\sum_{i\ne k} R(z + x_i)  e^{-  (z + x_i) }$ is finite for $\hat{\Lambda}$-almost all $\x$, the finiteness of $A^{(2)}$ is a consequence of the finiteness of $M^{(2)}$. Using first Lemma~\ref{lem:2} and \eqref{eqn:estimatesRenewal} and then \eqref{eqn:SpineTruncated}, we have
	\begin{align*}
	\E_{\hat{\Q}^b} \left(M^{(2)}\right) 
	= &\int_0^{\infty} \E_{\hat{\Q}^b} \Bigg( \int_{ P\left(\hat{\xi}_s+b\right)^{\!c}} \sum_{k\ge 1} \frac{R(b+\hat{\xi}_{s}+x_k)}{R(b + \hat{\xi}_s)} e^{- x_k} \Lambda(\dd\x) \Bigg) \dd s\\
	\le & C \int_0^{\infty} \E_{\hat{\Q}^b}  \left( \frac{1}{R(b\!+\!\hat{\xi}_s)}\int_{ P\left(\hat{\xi}_s+b\right)^{\!c}} \Big((b \!+\! \hat{\xi}_s) Y(\x) + \tilde{Y}(\x)\Big) \Lambda(\dd\x) \right) \dd s\\
	\le & C \int_0^{\infty} \E^{\uparrow}_b \left(\frac{1}{{\xi_s}}  \int_{P\left(\xi_s\right)^{\!c}} \left( \xi_s Y(\x) + \tilde{Y}(\x) \right)\Lambda(\dd\x) \right) \dd s.
	\end{align*}
	By Lemma~\ref{lem:integralTest}, under assumption \eqref{H1} we have 
	\[
	\int_0^\infty \int_{P(r)^c} \left(r Y(\x) + \tilde{Y}(\x)\right) \Lambda(\dd \x) \dd r < \infty. 
	\] 
	Since the function $r\mapsto \int_{P(r)^c} \frac{1}{r}\left(r Y(\mathbf{x}) + \tilde{Y}(\mathbf{x})\right) \Lambda(\dd \mathbf{x})$ is bounded by \eqref{eqn:Ybar>2-E} and clearly decreasing, Lemma~\ref{lem:integral} shows that $M^{(2)}$ and hence $A^{(2)}$ are $\hat{\Q}^b$-a.s.\ finite.
	
	We conclude that  $A^{(1)} + A^{(2)} < \infty$ $\hat{\Q}^b$-a.s., from which we deduce by \eqref{eqn:aimui1} that \eqref{eqn:aimui} holds, completing the proof.
\end{proof}

\begin{proof}[Proof of the sufficient part of Theorem~\ref{thm:main}]
	We assume that \eqref{H1} holds. Using Lemma~\ref{lem:ZaUI}, $(Z^b_t)$ is uniformly integrable for all $b > 0$. Therefore, by Corollary~\ref{cor:ncandsc}, we obtain that $Z_\infty$ is non-degenerate, which completes the proof.
\end{proof}

\subsection{The necessary part}
\label{sec:necessary}

In this section, we prove that if \eqref{H1} does not hold, then $Z_{\infty}\!=\!0$ $\P$-a.s.. Taking Corollary~\ref{cor:ncandsc} into account, it suffices to prove that
$Z^b_\infty = 0$ $\P$-a.s.\@ for all $b > 0$. 
By Fact~\ref{fct} and Lemma~\ref{lem:spinalDerivative}, the problem boils down to proving the following lemma.

\begin{lemma}
	\label{lem:bc}
	If \eqref{H1} does not hold, then for every $b>0$, 
	\[
	\limsup_{t \to \infty} Z^b_t = \infty, \quad \hat{\Q}^b\text{-a.s..}
	\]
\end{lemma}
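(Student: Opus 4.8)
The plan is to prove the stronger statement that, when \eqref{H1} fails, $\limsup_{t\to\infty}Z^b_t\ge\lambda$ holds $\hat{\Q}^b$-a.s.\ for every $\lambda>0$, and then let $\lambda\to\infty$. Recall that by \eqref{eqn:SpineTruncated} the shifted spine $\hat{\xi}+b$ is, under $\hat{\Q}^b$, a centred finite-variance Lévy process conditioned to stay positive, in particular transient, so Proposition~\ref{prop:finiteness} applies to it, and that $\hat{N}$ has under $\hat{\Q}^b$ the explicit predictable compensator of Lemma~\ref{lem:2}. The starting point is a deterministic lower bound for $Z^b$ along the spine: at an atom $(s,\x,k)$ of $\hat{N}$ the non-spine children $i\ne k$ are born at time $s$ at positions $\hat{\xi}_{s-}+x_i$ and, sharing the spine's ancestral path up to time $s$, satisfy $\inf_{u\le s}X_u\ge-b$, hence contribute their weights to $Z^b_s$, so that
\[
Z^b_s\ \ge\ \sum_{i\ne k}R\big(\hat{\xi}_{s-}+x_i+b\big)\,\ind{\hat{\xi}_{s-}+x_i+b>0}\,e^{-(\hat{\xi}_{s-}+x_i)} .
\]
Call $(s,\x,k)$ \emph{$\lambda$-productive} when the right-hand side is $\ge\lambda$; a $\lambda$-productive atom forces $Z^b_s\ge\lambda$, so it is enough to show that, when \eqref{H1} fails, $\hat{\Q}^b$-a.s.\ infinitely many atoms are $\lambda$-productive, for every (large) $\lambda$.

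Since the set of $\lambda$-productive atoms is predictable, the Borel–Cantelli lemma for the point process $\hat{N}$ together with Lemma~\ref{lem:2} reduces this to showing $\int_0^\infty g_\lambda(\hat{\xi}_s+b)\,\dd s=\infty$ $\hat{\Q}^b$-a.s., where (with $r=\hat{\xi}_{s-}+b$)
\[
g_\lambda(r):=\int_{\mathcal P}\sum_{k\ge1}\ind{e^{b-r}\sum_{i\ne k}R(r+x_i)\ind{r+x_i>0}e^{-x_i}\ \ge\ \lambda}\;e^{-x_k}\,\frac{R(r+x_k)}{R(r)}\,\ind{r+x_k>0}\;\Lambda(\dd\x)
\]
is the $\hat{\Q}^b$-compensator density of the $\lambda$-productive atoms when the spine is at height $r-b$. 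Using \eqref{eqn:estimatesRenewal} to compare $R$ with the identity, and \eqref{eqn:var}/\eqref{eqn:varbis} to dispose of the very negative coordinates $x_i$ (which, for $r$ large, correspond to children landing near the barrier $-b$ and contribute only $O(1)$ each), one expects to obtain that, for $\lambda$ large, $g_\lambda$ is bounded and eventually non-increasing, with $g_\lambda(r)$ comparable — up to constants depending on $\lambda$ and $b$ — to $\int_{\mathcal P}Y(\x)\ind{\bar{Y}(\x)\gtrsim e^{r}/r}\,\Lambda(\dd\x)$. Then Proposition~\ref{prop:finiteness}, applied to $\hat{\xi}+b$, gives $\int_0^\infty g_\lambda(\hat{\xi}_s+b)\,\dd s=\infty$ $\hat{\Q}^b$-a.s.\ if and only if $\int_0^\infty r\,g_\lambda(r)\,\dd r=\infty$, and a Fubini computation as in Lemma~\ref{lem:integralTest} (the lower-order $\tilde{Y}$-terms being handled as there, via \cite[Lemma~B.1]{Aid13} and \cite[Lemma~A.1]{Mal}) identifies $\int_0^\infty r\,g_\lambda(r)\,\dd r$, up to multiplicative constants, with $\int_{\mathcal P}\big(Y(\x)\log_+(\bar{Y}(\x)-1)^2+\tilde{Y}(\x)\log_+(\bar{Y}(\x)-1)\big)\Lambda(\dd\x)$ — the $\lambda$-dependent constant merely shifting the effective upper limit $\log\bar{Y}(\x)$ of the $r$-integral by $O_\lambda(1)$, which does not change convergence. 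By Lemmas~\ref{lem:H1bis}--\ref{lem:integralTest} this integral is infinite precisely because \eqref{H1} fails, so $\int_0^\infty r\,g_\lambda(r)\,\dd r=\infty$ for all large $\lambda$; this produces infinitely many $\lambda$-productive atoms and hence $\limsup_{t\to\infty}Z^b_t\ge\lambda$ $\hat{\Q}^b$-a.s., completing the argument after letting $\lambda\to\infty$.

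The main obstacle is the estimate on $g_\lambda$ and its identification with the integral test: one must verify that the Girsanov weight $\sum_k e^{-x_k}R(r+x_k)/R(r)$ coming from Lemma~\ref{lem:2} is, on the $\lambda$-productive configurations, comparable to $Y(\x)$ up to constants depending only on $\lambda$ and $b$ — in particular that the portion of that weight, and of the bound above, carried by very negative coordinates is absorbed using \eqref{eqn:var} — and then to run the Fubini computation carefully enough to see that the logarithmic growth in \eqref{H1} is insensitive to the fixed multiplicative constant $\lambda$. Should the bound at the birth times prove too crude on its own, one would instead evaluate each newborn bush a bounded time later and apply a Paley–Zygmund inequality, the relevant second moment of the truncated derivative martingale at a fixed time being finite thanks to \eqref{eqn:var}; the subsequent Borel–Cantelli and perpetual-integral steps would be unchanged. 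Verifying the Borel–Cantelli step for the compensated point process $\hat{N}$, and that $g_\lambda$ is bounded and eventually monotone so that Proposition~\ref{prop:finiteness} applies, is comparatively routine once $g_\lambda$ is understood.
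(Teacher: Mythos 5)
Your proposal runs the argument in the direct direction (negation of \eqref{H1} $\Rightarrow$ infinitely many ``productive'' birth events $\Rightarrow$ $\limsup_t Z^b_t=\infty$), whereas the paper argues by contraposition: it assumes $\hat{\Q}^b(\limsup_t Z^b_t<\infty)>0$, discretizes along the stopping times at which $\bar{Y}(\x)>2$, applies Chen's conditional Borel--Cantelli lemma (Lemma~\ref{lem:LouisChen}) to get $\int_0^\infty h(\hat{\xi}_r)\,\dd r<\infty$ with positive probability, and then uses the \emph{finiteness} direction of Proposition~\ref{prop:finiteness} on monotone minorants to derive \eqref{eqn:Int1}--\eqref{eqn:Int3}, hence \eqref{H1}. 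Your orientation is legitimate in principle: the spine lower bound at atoms, the compensator of Lemma~\ref{lem:2}, the identification \eqref{eqn:SpineTruncated}, and the standard counting-process Borel--Cantelli (``compensator infinite $\Rightarrow$ infinitely many atoms'', which is again Lemma~\ref{lem:LouisChen}) are all available; you would additionally need a word on non-accumulation of productive atoms in finite time (the paper gets this from $\hat{\Lambda}(\bar{Y}>2)<\infty$ and absolute continuity) so that infinitely many of them indeed forces $\limsup_{t\to\infty}Z^b_t\ge\lambda$.

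The gap is at the pivotal estimate on $g_\lambda$, which is asserted rather than proved and is false as stated. First, the claimed lower bound $g_\lambda(r)\gtrsim\int Y(\x)\ind{\bar{Y}(\x)\gtrsim e^r/r}\Lambda(\dd\x)$ does not hold configuration-wise: coordinates $x_i\le -r$ (children killed at the barrier $-b$) inflate $\bar{Y}(\x)$ but contribute nothing to the productivity sum $\sum_{i\ne k}R(r+x_i)\ind{r+x_i>0}e^{-x_i}$; and the exclusion $i\ne k$ can delete the dominant term of that sum, so for configurations whose mass sits on a single coordinate the productive weight essentially vanishes although $\bar{Y}$ is huge. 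Since the negation of \eqref{H1} must be shown to survive these removals, you need to prove that the discarded parts carry only a finite portion of the $\int_0^\infty r(\cdot)\,\dd r$ integral, which is exactly the role played in the paper by \eqref{eqn:varbis}, \eqref{eqn:Ybar>2-E} and the decomposition \eqref{eqn:decompositionOfY} into $Y^y_\pm$; your ``main obstacle'' paragraph gestures at the very negative coordinates but not at the $i\ne k$/dominant-term issue, and this is the substance of the proof, not a verification. Second, $g_\lambda$ has no reason to be eventually non-increasing (both the threshold $\lambda e^{r-b}$ and the set of surviving coordinates $\{x_i>-r\}$ move with $r$), so Proposition~\ref{prop:finiteness} cannot be applied to it directly; as in the paper's $f_1$ and $f_2$, you must manufacture bounded, eventually non-increasing minorants with $r$-independent numerators (one thresholded on $\tilde{Y}$, one on $Y$, according to which half of the negated condition diverges) and only then invoke the divergence direction of Proposition~\ref{prop:finiteness} together with a Fubini computation. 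With these repairs your route closes, but the work needed coincides, in mirror image, with the computations that constitute the paper's proof, so as written the proposal leaves its core step open.
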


Its proof relies crucially on a conditional version of the Borel-Cantelli lemma, obtained by \cite{Che78}, for a sum of non-negative random variables. A simplified version of this result can be stated as follows. 
\begin{lemma}[{\cite{Che78}}]
\label{lem:LouisChen}
Let $(V_n, n\ge 1)$ be a sequence of non-negative random variables defined on a probability space $(\Omega, \mathcal{H}, \mathbf{P})$ and $(\mathcal{H}_n, n\ge 0)$ a filtration. Let $U_n= \mathbf{E}(V_n\mid \mathcal{H}_{n-1})$, $n\ge 1$. 
\begin{enumerate}
\item $\sum_{n\ge 1} V_n <\infty$ a.s.\  on $\{\sum_{n\ge 1} U_n <\infty \}$. 
\item If $(V_n)$ is bounded and $(\mathcal{H}_n)$-adapted, then $\sum_{n\ge 1}\! U_n <\infty $ a.s.\ on $\big\{\sum_{n\ge 1}\! V_n \!<\!\infty \big\}$.  
\end{enumerate}
\end{lemma}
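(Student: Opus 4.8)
The plan is to prove both parts directly, by the same device: truncate the two series at a stopping time chosen so that one of the two partial sums becomes a.s.\ bounded, and then use Tonelli's theorem and the tower property to trade $V_n$ for $U_n$ under the expectation. The one elementary fact used throughout is that for any $(\mathcal{H}_n)$-stopping time $\tau$ one has $\{n\le\tau\}=\{\tau\le n-1\}^c\in\mathcal{H}_{n-1}$, so that $\mathbf{E}(U_n\ind{n\le\tau})=\mathbf{E}\big(\mathbf{E}(V_n\mid\mathcal{H}_{n-1})\ind{n\le\tau}\big)=\mathbf{E}(V_n\ind{n\le\tau})$, and therefore, summing over $n$ by Tonelli (all terms being non-negative),
\[
\mathbf{E}\Big(\sum_{n=1}^{\tau} U_n\Big)=\mathbf{E}\Big(\sum_{n=1}^{\tau} V_n\Big).
\]

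For part~(1), for each integer $M\ge1$ I would set $\tau_M:=\inf\{n\ge0:\sum_{k=1}^{n+1}U_k>M\}$. Since $U_k=\mathbf{E}(V_k\mid\mathcal{H}_{k-1})$ is $\mathcal{H}_{k-1}$-measurable, the partial sum $\sum_{k=1}^{n+1}U_k$ is $\mathcal{H}_n$-measurable, so $\tau_M$ is an $(\mathcal{H}_n)$-stopping time; and by construction $\sum_{k=1}^{\tau_M}U_k\le M$ everywhere (including $\sum_{k=1}^{\infty}U_k\le M$ on $\{\tau_M=\infty\}$). Applying the displayed identity to $\tau=\tau_M$ gives $\mathbf{E}(\sum_{n=1}^{\tau_M}V_n)\le M<\infty$, hence $\sum_{n=1}^{\tau_M}V_n<\infty$ a.s. On $\{\sum_n U_n\le M\}$ we have $\tau_M=\infty$, so $\sum_{n\ge1}V_n<\infty$ a.s.\ on that event; since $\{\sum_n U_n<\infty\}=\bigcup_{M\ge1}\{\sum_n U_n\le M\}$, taking the union over $M$ proves part~(1).

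For part~(2), assume $V_n\le K$ for all $n$ and that $(V_n)$ is $(\mathcal{H}_n)$-adapted. I would instead truncate with $\sigma_M:=\inf\{n\ge1:\sum_{k=1}^{n}V_k>M\}$, which is an $(\mathcal{H}_n)$-stopping time precisely because $\sum_{k=1}^n V_k$ is $\mathcal{H}_n$-measurable by adaptedness. Here the overshoot is controlled by boundedness: $\sum_{k=1}^{\sigma_M}V_k\le M+V_{\sigma_M}\le M+K$ (and $\le M$ on $\{\sigma_M=\infty\}$). The displayed identity applied to $\tau=\sigma_M$ now yields $\mathbf{E}(\sum_{n=1}^{\sigma_M}U_n)\le M+K<\infty$, so $\sum_{n=1}^{\sigma_M}U_n<\infty$ a.s.; on $\{\sum_n V_n\le M\}$ we have $\sigma_M=\infty$, and letting $M\to\infty$ gives $\sum_n U_n<\infty$ a.s.\ on $\{\sum_n V_n<\infty\}$.

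There is no genuinely hard step: the argument is bookkeeping around the right stopping times. The only points deserving care are the measurability of the truncating stopping times — in~(1) it rests on $U_n$ being $\mathcal{H}_{n-1}$-measurable (the conditional expectation looks back one step), and in~(2) it is exactly where the adaptedness hypothesis on $(V_n)$ is used — and the bound on the overshoot $\sum_{k=1}^{\sigma_M}V_k-M\le V_{\sigma_M}\le K$ in~(2), which is the sole place the boundedness hypothesis enters. One should also note that the interchanges of sum and expectation are legitimate by Tonelli and that conditional expectations of non-negative variables are always well defined (with values in $[0,\infty]$), so no integrability of the $V_n$ is assumed a priori.
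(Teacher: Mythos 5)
Your proof is correct: the stopping-time truncation $\tau_M$ (predictable, since the partial sums of $U_n$ are $\mathcal{H}_{n-1}$-measurable one step ahead) together with the identity $\mathbf{E}\big(\sum_{n=1}^{\tau}U_n\big)=\mathbf{E}\big(\sum_{n=1}^{\tau}V_n\big)$ handles part (1), and the bounded-overshoot stopping time $\sigma_M$ handles part (2), with adaptedness and boundedness used exactly where you say they are. The paper itself gives no proof of this lemma — it is quoted from \cite{Che78} — and your argument is essentially the classical one behind that reference, so nothing further is needed.
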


\begin{proof}[Proof of Lemma~\ref{lem:bc}]
	Fix $b > 0$. We prove this lemma by contraposition, showing that 
	\[
	\hat{\Q}^b\Big(\limsup_{t \to \infty} Z^b_t < \infty \Big)>0  \quad \Rightarrow \quad \eqref{H1}. 
	\]
	 Recall from Lemmas~\ref{lem:spinalDecomposition} and \ref{lem:spinalDerivative} the construction of $\hat{\Q}^b$ from $\hat{\P}$ via a change of probabilities and the spinal decomposition for both. With notation therein, for each atom $(t,\x,k)$ of $\hat{N}$,  by \eqref{eqn:estimatesRenewal} we have
	\[
	Z^b_t \geq \sum_{j \geq 1} R(b + \hat{\xi}_{t-} + x_j) e^{-(\hat{\xi}_{t-}+x_j)} \geq c_1 e^{-\hat{\xi}_{t-}}\sum_{j \geq 1} \left(b + \hat{\xi}_{t-} + x_j\right)_+  e^{-x_j}.
	\]
	The assumption $\hat{\Q}^b(\limsup_{t \to \infty} Z^b_t < \infty)>0$ implies that there exist $T,A > 0$ such that
	\begin{equation}
	\label{eqn:firstStep}
	\hat{\Q}^b\bigg(\forall (t,\x,k) \text{ atom of } \hat{N} : t \!\leq\! T \text{ or } \sum_{j \geq 1} \Big(b \!+\! \hat{\xi}_{t-}\! +\! x_j\Big)_{\!\!+}  \!e^{-x_j} \!\leq\! A e^{\hat{\xi}_{t-}}\bigg) > 0.
	\end{equation}
	
	Write $\hat{\mathcal{F}}$ for the natural filtration of the branching Lévy process with spine.
	Let 
	\[
	G:= \bigg\{ (t,\x,k) \text{ atoms of } \hat{N}\colon \sum_{j \geq 1} \left(b + \hat{\xi}_{t-} + x_j\right)_{\!+}  e^{-x_j} > A e^{\hat{\xi}_{t-}+b}\bigg\}.
	\]
	Since $\sum_{j \geq 1} \left(b + y + x_j\right)_+  e^{-x_j}\le \max(b + y, 1) \bar{Y} (\x)$, we may assume that $A$ is large enough such that $G\subset \{(t,\x,k) \text{ atoms of } \hat{N}\colon \bar{Y}(\x) > 2\}$. 
	By \eqref{eqn:Ybar>2-E}, we have $\hat{\Lambda}(\bar{Y}(\x) > 2) < \infty$, where $\hat{\Lambda}$ is defined by \eqref{eqn:hatLambdaDef}. So the first coordinates of the atoms in $G$ cannot accumulate in finite time $\hat{\P}$-a.s.. As $\hat{\Q}^b_{|\hat{\mathcal{F}}_t}$ is absolutely continuous with respect to $\hat{\P}_{|\hat{\mathcal{F}}_t}$ for any fixed time $t>0$, they cannot accumulate in finite time $\hat{\Q}^b$-a.s.\ either. Therefore, \eqref{eqn:firstStep} implies that
	\begin{equation}
	\label{eqn:S}
	\hat{\Q}^b(\# G<\infty) >0. 
	\end{equation}
	
	We now prove that condition \eqref{eqn:S} implies \eqref{H1}. To use Lemma~\ref{lem:LouisChen}, we need a time-discretization argument. Specifically, enumerate  the atoms of $\hat{N}$ such that $\bar{Y}(\x) > 2$ in increasing order of time by $\{(\tau_n, \x_n, k_n),n\ge 1\}$. 
	Then $(\tau_n)$ are $\hat{\mathcal{F}}$-stopping times that do not accumulate $\hat{\mathbb{Q}}^b$-a.s.. Set $\tau_0=0$ and 
	\[
	B_n = \sum_{(t,\x,k) \text{ atom of } \hat{N}}   \ind{t \in [\tau_n ,\tau_{n+1})}  \ind{ \sum_{j \geq 1} \left(b + \hat{\xi}_{t-} + x_j\right)_+  e^{-x_j}> A e^{\hat{\xi}_{t-}+b}},\qquad n\ge 0.
	\]
	Then we have $\#G= \sum_{n \geq 1} B_n$. 
	Note that each $B_n$ is $\hat{\mathcal{F}}_{\tau_n}$-measurable and takes values in $\{0,1\}$.
	
	As the family $(B_n)$ is bounded, it follows from the second part of Lemma~\ref{lem:LouisChen} that
	\begin{align}
	\hat{\Q}^b\Big(\sum_{n \geq 1} B_n < \infty\Big)
	= 
	\hat{\Q}^b\Big(\sum_{n\ge 1} \E_{\hat{\Q}^b}\left(B_n\,\middle|\,\hat{\mathcal{F}}_{\tau_{n-1}} \right) < \infty\Big)\label{eq:sum-Bn}.
	\end{align}
	We use the Markov property and Lemma \ref{lem:2} to obtain
	\begin{equation}
	\label{eq:B_i-conditioned}
	\E_{\hat{\Q}^b}\big(B_n\,\big|\,\hat{\mathcal{F}}_{\tau_{n-1}} \big)  = \E_{\hat{\Q}^b}\bigg(\int_{\tau_{n-1}}^{\tau_n} h(\hat{\xi}_r)\dd r \,\bigg|\, \hat{\mathcal{F}}_{\tau_{n-1}}\bigg), \quad \hat{\Q}^b\text{-a.s.}, 
	\end{equation}
	where the function $h\colon \R\to \R$ is defined by
	\[
	h(y) := \int_{\mathcal{P}} \ind{ \sum_{j \geq 1} \left(b + y + x_j\right)_+  e^{-x_j}> A e^{y+b}}\sum_{k \geq 1} e^{-x_k}\frac{R(b+y+x_k)}{R(b+y)}  \Lambda( \dd \x).
	\]
	We deduce by \eqref{eqn:S}, \eqref{eq:sum-Bn} and \eqref{eq:B_i-conditioned} that 
	\begin{equation*}
	\hat{\Q}^b \bigg( \sum_{n\ge 1}\E_{\hat{\Q}^b}\bigg(\int_{\tau_{n-1}}^{\tau_n} h(\hat{\xi}_r)\dd r \,\bigg|\, \hat{\mathcal{F}}_{\tau_{n-1}} \bigg) < \infty\bigg)>0.
	\end{equation*}	
	Then, using the first part of Lemma~\ref{lem:LouisChen}, which does not require the  boundedness, we deduce that 
	\begin{equation}\label{eqn:h-finite}
	\hat{\Q}^b \bigg( \int_{0}^{\infty} h(\hat{\xi}_r)\dd r   < \infty\bigg)>0.
	\end{equation}
	Define a function $f\colon \R\to \R$ by
	\[
	f(y) := \int_{\mathcal{P}} \ind{y>0} \ind{ \sum_{j \geq 1} \!(y + x_j)_{\!+}  \!e^{-x_j}> A e^{y}}\sum_{k \geq 1} e^{-x_k}\frac{(y+x_k)_+}{y+1}  \Lambda( \dd \x).
	\]
	By \eqref{eqn:estimatesRenewal}, there exists $c > 0$ such that $h(\,\cdot\,)\ge c f(\,\cdot\,+b)$. Therefore, \eqref{eqn:h-finite} leads to:   
	\begin{equation}
	\label{eqn:int-xi1}
	\hat{\Q}^b \bigg( \int_{0}^{\infty} f(\hat{\xi}_r+b)\dd r<\infty\bigg)>0, \text{ which is equivalent to }  \P^{\uparrow}_b\bigg( \int_{0}^{\infty} f(\xi_r)\dd r<\infty\bigg)>0,
	\end{equation} 
    in view of \eqref{eqn:SpineTruncated}.
	To directly apply Proposition~\ref{prop:finiteness} to the perpetual integral, we would need $f$ to be eventually non-increasing, which is not necessarily the case. Instead, we use various non-increasing lower bounds of $f$ to show that the right-hand side of \eqref{eqn:int-xi1} successively implies the three following integral tests:
	\begin{align}
	&\int_{\mathcal{P}} \tilde{Y}(\x) \log_+ (\tilde{Y}(\x)-1) \Lambda(\dd \x)<\infty,\label{eqn:Int1} \\
	&\int_{\mathcal{P}} Y(\x) \log_+ (Y(\x)-1) \Lambda(\dd \x)<\infty, \label{eqn:Int2}\\
	&\int_{\mathcal{P}} Y(\x) \log_+ (Y(\x)-1)^2 \Lambda(\dd \x)<\infty.\label{eqn:Int3}
	\end{align}
	Note that \eqref{eqn:Int1} and \eqref{eqn:Int3} immediately imply \eqref{H1}, which will complete the proof. Equation \eqref{eqn:Int2} is needed as a first step to prove \eqref{eqn:Int3}.
	
	We first prove \eqref{eqn:Int1}. For $y \geq 0$, we have $(y + x_j)_+ \geq (x_j)_+$. Therefore for all $y \geq 0$,
	\[
	f(y) \geq f_{1}(y) := \int_{\mathcal{P}} \ind{y>0}\ind{ \sum_{j \geq 1} \!(x_j)_{\!+}  \!e^{-x_j}> A e^{y}}\sum_{k \geq 1} e^{-x_k}\frac{(x_k)_+}{y+1}  \Lambda( \dd \x).
	\]
	As $f_{1}$ is non-increasing and \eqref{eqn:int-xi1} implies $\P^{\uparrow}_b \left( \int_{0}^{\infty} f_{1}(\xi_r)\dd r<\infty\right)>0$, using Proposition \ref{prop:finiteness} and Fubini's theorem, we deduce that
\begin{align*}	
	\infty > \int_0^\infty (y+1) f_{1}(y) \dd y
&	= \int_0^\infty \dd y \int_{\mathcal{P}}\ind{ \sum_{j \geq 1} \!(x_j)_{\!+}  \!e^{-x_j}> A e^{y}}\sum_{k \geq 1} e^{-x_k} (x_k)_+  \Lambda( \dd \x)\\
&=	\int_{\mathcal{P}} \tilde{Y}(\x) \log_+(\tilde{Y}(\x) / A) \Lambda(\dd \x). 
	\end{align*}
	This yields \eqref{eqn:Int1}, since we have 
\begin{multline}	
 \bigg|\int_{\mathcal{P}}\tilde{Y}(\x) \log_+(\tilde{Y}(\x)\! -\! 1) \Lambda(\dd \x) - \int_{\mathcal{P}} \tilde{Y}(\x) \log_+(\tilde{Y}(\x) / A) \Lambda(\dd \x)\bigg|\\
   \le A \log A \cdot \Lambda\Big(A\ge \tilde{Y}(\x)\geq 2\Big)+ \log A \int_{\mathcal{P}} \ind{\tilde{Y}(\x)> A}\tilde{Y}(\x)  \Lambda(\dd \x) <\infty, \label{eq:diff-A-1}
	\end{multline}
	where the finiteness of the last two terms are from \eqref{eqn:Ybar>2} and \eqref{eqn:Ybar>2-E}.

	We now turn to the proof of \eqref{eqn:Int2}. 
	To this end, for each $y \geq 0$, we divide $Y(\x)$ into
	\[
	Y^y_+(\x) := \sum_{j \geq 1} \ind{x_j \geq -y/2} e^{-x_j} \quad \text{and} \quad Y^y_-(\x):= \sum_{j \geq 1} \ind{x_j < -y/2} e^{-x_j}.
	\]
	Notice that for all $\x \in \mathcal{P}$, $L \in \R$ and $y \in \R_+$, we have
	\begin{align*}
	Y(\x) \ind{Y(\x) \geq L}
	&\!= \left(Y^y_+(\x) + Y^y_-(\x)\right)\ind{Y(\x) \geq L}\\
	&\!= Y^y_+(\x) \ind{Y^y_+(\x) \geq L} \!+\! Y^y_+(\x) \ind{Y(\x) \geq L \geq Y^y_+(\x)} \!+\! Y^y_-(\x) \ind{Y(\x) \geq L}.
	\end{align*}
	Additionally, since $Y(\x) \geq Y^y_+(\x) \geq Y(\x)/2$ whenever $Y^y_+(\x) \geq Y^y_-(\x)$, we have 
	\begin{align*}
	Y^y_+(\x) \ind{Y(\x) \geq L \geq Y^y_+(\x)}
	\! &\leq Y^y_+(\x) \ind{Y(\x) \geq L \geq Y^y_+(\x) \geq Y^y_-(\x)} \!+Y^y_+(\x) \ind{Y^y_+(\x) < Y^y_-(\x)}\\
	&\leq Y(\x) \ind{Y(\x) \geq L \geq Y(\x)/2} + Y^y_-(\x),
	\end{align*}
 As a result, we have
	\begin{equation}
	\label{eqn:decompositionOfY}
	Y(\x) \ind{Y(\x) \geq L} \leq Y^y_+(\x) \ind{Y^y_+(\x) \geq L} + Y(\x) \ind{Y(\x) \geq L \geq Y(\x)/2} + 2Y^y_-(\x).
	\end{equation}

	By similar arguments as in \eqref{eq:diff-A-1}, to show that \eqref{eqn:Int2} holds, it is enough to prove that 
	\[
	\int_{\mathcal{P}} Y(\x) (\log(Y(\x)/A) - 2)_+ \Lambda(\dd \x) = \int_2^\infty \dd y \int_{\mathcal{P}} Y(\x) \ind{Y(\x) \geq Ae^y} \Lambda(\dd \x)<\infty,
	\]
	 where the equality is due to Fubini's theorem. 
	By Proposition~\ref{prop:finiteness}, this is equivalent to 
	\[
	\P^{\uparrow}_b\bigg( \int_{2}^{\infty}f_2(\xi_r)\dd r <\infty\bigg)>0,\quad  \text{where}~ f_2(y):=\frac{1}{y+1} \int_{\mathcal{P}} Y(\x) \ind{Y(\x) \geq Ae^{y}} \Lambda(\dd \x).
	\]
	In view of \eqref{eqn:decompositionOfY}, it suffices to prove that
	\begin{align}
	\label{eqn:aim1}
	&\P^{\uparrow}_b\bigg( \int_{2}^{\infty}f_3(\xi_r)\dd r <\infty\bigg)>0,\quad\text{where}~ f_3(y):= \frac{1}{y+1} \int_{\mathcal{P}} Y^y_+(\x) \ind{Y^y_+(\x) \geq Ae^y} \Lambda(\dd \x) ,\\
	\label{eqn:aim2}
	&\E^{\uparrow}_b\bigg( \int_{2}^{\infty}f_4(\xi_r)\dd r\bigg) <\infty, \quad \text{where}~f_4(y):=\frac{1}{y+1}\int_{\mathcal{P}}Y(\x) \ind{Y(\x) \geq Ae^y \geq Y(\x)/2}  \Lambda(\dd \x),  \\
	\label{eqn:aim3}
	&\E^{\uparrow}_b\bigg( \int_{2}^{\infty}f_5(\xi_r)\dd r\bigg) <\infty, \quad\text{where}~ f_5(y):= \frac{1}{y+1} \int_{\mathcal{P}} Y^y_-(\x) \Lambda(\dd \x).
	\end{align}
	Using Fubini's theorem and Lemma~\ref{lem:integral}, we have 
		\begin{align*}
\E^{\uparrow}_b\bigg( \int_{2}^{\infty}f_4(\xi_r)\dd r\bigg)
&=  \int_{\mathcal{P}}Y(\x)\Lambda(\dd \x) \E^{\uparrow}_b\bigg( \int_2^\infty \frac{1}{\xi_r+1} \ind{Y(\x) \geq Ae^{\xi_r} \geq Y(\x)/2}\dd r  \bigg) \\
& \le C \int_{\mathcal{P}}Y(\x)\Lambda(\dd \x)  \int_2^\infty \frac{y}{y+1} \ind{Y(\x) \geq Ae^{y} \geq Y(\x)/2}\dd y  \\
&\le C \log 2 \int_{\mathcal{P}}Y(\x)\ind{Y(\x)\ge A}\Lambda(\dd \x).
	\end{align*}
	The last integral is finite by \eqref{eqn:Ybar>2-E}, so \eqref{eqn:aim2} follows. 
	Similarly, we deduce \eqref{eqn:aim3} by using Lemma~\ref{lem:integral} and the observation that 
	\[
	\int_2^\infty \dd y \int_{\mathcal{P}} Y^y_-(\x) \Lambda(\dd \x) \leq 2 \int_{\mathcal{P}} \sum_{j \geq 1} |x_j| e^{-x_j} \ind{x_j < -1} \Lambda(\dd \x),
	\]
	which is finite by \eqref{eqn:varbis}. 
	
	We now turn to \eqref{eqn:aim1}. For all $y\geq 2$, using the fact that $(x_j+y)_+ \geq y\ind{x_j > -y/2}/2 \geq \ind{x_j > -y/2}$, we obtain
	\begin{equation*}
	f(y) \geq \frac{1}{(y+1)} \int_{\mathcal{P}}\ind{  Y^y_+(\x) > A e^{y}} Y_+^y(\x)  \Lambda( \dd \x) = f_3(y).
	\end{equation*}
Now \eqref{eqn:aim1} follows from \eqref{eqn:int-xi1}. 
	This completes the proof of \eqref{eqn:Int2}.
	
	Finally, we turn to the proof of \eqref{eqn:Int3}, by following similar steps as the proof of \eqref{eqn:Int2}. Using Fubini's theorem, \eqref{eqn:Ybar>2-E} and Proposition~\ref{prop:finiteness}, we observe that \eqref{eqn:Int3} is a consequence of
	\begin{align*}
			&\int_2^\infty y \dd y  \int_{\mathcal{P}} \ind{Y(\x) > Ae^y} Y(\x) \Lambda(\dd \x) < \infty 	\\
			\iff& \P^{\uparrow}_b\bigg( \int_{2}^{\infty}\dd r \int_{\mathcal{P}} \ind{Y(\x) > Ae^{\xi_r}} Y(\x) \Lambda(\dd \x) <\infty\bigg)>0, 
	\end{align*}
	which, using again \eqref{eqn:decompositionOfY}, is a consequence of the three conditions
	\begin{align}
	\label{eqn:bisaim1}
	&\P^{\uparrow}_b\bigg( \int_2^\infty f_6(\xi_r) \dd r< \infty\bigg)>0, \quad \text{where}~ f_6(y) := \int_{\mathcal{P}} Y^y_+(\x) \ind{Y^y_+(\x) \geq Ae^y} \Lambda(\dd \x),\\
	\label{eqn:bisaim2}
	&\E^{\uparrow}_b\bigg( \int_2^\infty f_7(\xi_r) \dd r\bigg)< \infty, \quad \text{where}~ f_7(y) :=  \int_{\mathcal{P}}Y(\x) \ind{Y(\x) \geq Ae^y \geq Y(\x)/2}  \Lambda(\dd \x),\\
	\label{eqn:bisaim3}
	&\E^{\uparrow}_b\bigg( \int_2^\infty f_8(\xi_r) \dd r\bigg)< \infty, \quad \text{where}~ f_8(y) := \int_{\mathcal{P}} Y^y_-(\x) \Lambda(\dd \x) < \infty.
	\end{align}	
	We remark that 
	\[
	\int_2^\infty \!\! y \dd y \int_{\mathcal{P}}Y(\x) \ind{Y(\x) \geq Ae^y \geq Y(\x)/2}  \Lambda(\dd \x) \leq C \int_{\mathcal{P}} \log_+(Y(\x)/A-2) Y(\x) \Lambda(\dd \x),
	\]
	which is finite as we have already proved \eqref{eqn:Int2},  on account of \eqref{eqn:Ybar>2-E}. Similarly, we have
	\[
	\int_2^\infty y \dd y \int_{\mathcal{P}} Y^y_-(\x) \Lambda(\dd \x) \leq \int_{\mathcal{P}} \sum_{j \geq 1} x_j^2 e^{-x_j} \ind{x_j < -1} \Lambda(\dd \x),
	\]
	which is finite by \eqref{eqn:varbis}. Then we can deduce \eqref{eqn:bisaim2} and \eqref{eqn:bisaim3}, by using Fibini's theorem and Lemma~\ref{lem:integral}, in the same way as we prove \eqref{eqn:aim2}.

	We thus only have to prove \eqref{eqn:bisaim1}.
	For all $y \geq 2$, since $(y + x_j)_+\ge \ind{x_j \geq -y/2}(y+1)/3$, we have
	\[
	f(y) \geq  \frac{1}{3} \int_{\mathcal{P}}\ind{  Y^y_+(\x) > A e^{y}} Y_+^y(\x)  \Lambda( \dd \x).
	\]
	Then \eqref{eqn:bisaim1} follows from \eqref{eqn:int-xi1}, which completes the proof of  \eqref{eqn:Int3}. 
	
	In sum, assuming that $\hat{\Q}^b\left(\limsup_{t \to \infty} Z^b_t < \infty\right)>0$, we deduce \eqref{eqn:Int1}, \eqref{eqn:Int2} and \eqref{eqn:Int3}. Therefore, \eqref{H1} holds. 
\end{proof}

\begin{proof}[Proof of the necessary part of Theorem~\ref{thm:main}]
	Assume that \eqref{H1} does not hold. By Lemma~\ref{lem:bc}, we have
	\[
	\limsup_{t \to \infty} Z^b_t = \infty, \quad \hat{\Q}^b\text{-a.s.,} \quad \text{ for all } b > 0. 
	\]
	By Fact~\ref{fct}, this implies that $\lim_{t \to \infty} Z^b_t = 0$, $\P$-a.s.. Using Corollary~\ref{cor:ncandsc}, we conclude that $Z_\infty = 0$ $\P$-a.s., completing the proof.
\end{proof}

%

%
%

\begin{acks}[Acknowledgments]
 The authors would like to thank Samuel Baguley, Loïc Chaumont, Leif Döring and Andreas Kyprianou for valuable inputs in the proof of Proposition~\ref{prop:finiteness}. 
 We also thank Haojie Hou and an anonymous referee for their helpful comments. 
\end{acks}
\begin{funding}
	Part of this work was supported by a PEPS JCJC grant.
BM is partially supported by the Projet ANR-16-CE93-0003 (ANR MALIN). QS was partially supported by the SNSF Grant P2ZHP2\_171955. 
\end{funding}



\bibliographystyle{imsart-number} 
\bibliography{biblio}       


\end{document}